\setlist[enumerate]{format=\normalfont}
\renewcommand*\partial{\textnormal{\reflectbox{6}}}
\newcolumntype{C}[1]{>{\centering\let\newline\\\arraybackslash\hspace{0pt}}m{#1}}
\newcommand{\marginparstretch}{0.6}
\let\oldmarginpar\marginpar
\renewcommand\marginpar[1]{\-\oldmarginpar[\framebox{\setstretch{\marginparstretch}\begin{minipage}{\marginparwidth}{\raggedleft\tiny #1}\end{minipage}}]{\framebox{\setstretch{\marginparstretch}\begin{minipage}{\marginparwidth}{\raggedright\tiny #1}\end{minipage}}}}
\newcommand{\xdownarrow}[1]{%
  {\left\downarrow\vbox to #1{}\right.\kern-\nulldelimiterspace}
}
\definecolor{Pink}{RGB}{230 56 243}
\definecolor{Blue}{RGB}{0 19 147}
\definecolor{Green}{RGB}{66 147 41}
\definecolor{Grey}{RGB}{102 102 102}
\definecolor{Orange}{RGB}{237 107 45}
\definecolor{Red}{RGB}{234 53 159}
\tikzset{
        cvertex/.style={circle,draw=black,inner sep=1pt,outer sep=3pt},
        vertex/.style={circle,fill=black,inner sep=1pt,outer sep=3pt},
        DB/.style={circle,draw=black,circle,fill=black,inner sep=0pt, minimum size=4pt},
        DBlue/.style={circle,draw=black,circle,fill=white,inner sep=0pt, minimum size=4pt},
        DW/.style={circle,draw=black,inner sep=0pt, minimum size=4pt},
        star/.style={circle,fill=yellow,inner sep=0.75pt,outer sep=0.75pt},
        tvertex/.style={inner sep=1pt,font=\scriptsize},
        gap/.style={inner sep=0.5pt,fill=white}}
\newtheorem{thm}{Theorem}[section]
\newtheorem{prop}[thm]{Proposition}
\newtheorem{lemma}[thm]{Lemma}
\newtheorem{cor}[thm]{Corollary}
\newtheorem{conj}[thm]{Conjecture}
\theoremstyle{definition} 
\newtheorem{defin}[thm]{Definition}
\newtheorem{setup}[thm]{Setup}
\newtheorem{remark}[thm]{Remark}
\newtheorem{convention}[thm]{Convention}
\newtheorem{notation}[thm]{Notation}
\numberwithin{equation}{section}
\def\coh{\mathop{\rm coh}\nolimits}
\def\Hom{\mathop{\rm Hom}\nolimits}
\def\RHom{\mathop{\rm {\bf R}Hom}\nolimits}
\def\End{\mathop{\rm End}\nolimits}
\def\Ext{\mathop{\rm Ext}\nolimits}
\def\charac{\mathop{\rm char}\nolimits}
\def\Db{\mathop{\rm{D}^b}\nolimits}
\newcommand{\K}{\mathop{{}_{}\mathbb{K}}\nolimits}
\def\ab{\mathop{\rm ab}\nolimits}
\def\RHom{{\rm{\bf R}Hom}}
\newcommand\Curve{\mathrm{C}}
\newcommand{\N}{\mathbb N}
\newcommand{\Z}{\mathbb Z}
\newcommand{\C}{\mathbb C}
\newcommand{\scrC}{\EuScript{C}}
\newcommand{\scrD}{\EuScript{D}}
\newcommand{\scrE}{\EuScript{E}}
\newcommand{\scrF}{\EuScript{F}}
\newcommand{\scrI}{\EuScript{I}}
\newcommand{\scrJ}{\EuScript{J}}
\newcommand{\scrN}{\EuScript{N}}
\newcommand{\scrO}{\EuScript{O}}
\newcommand{\scrU}{\EuScript{U}}
\newcommand{\scrX}{\EuScript{X}}
\newcommand{\Ainf}{\mathrm{A}_\infty}
\newcommand{\dcyc}{\updelta}
\newcommand{\llangle}{\langle\kern -2.5pt\langle}
\newcommand{\rrangle}{\rangle\kern -2.5pt\rangle}
\newcommand{\llcurve}{\{\kern -3pt\{}
\newcommand{\rrcurve}{\}\kern -3pt\}}
\newcommand{\llsq}{[\![}
\newcommand{\rrsq}{]\!]}
\newcommand{\lcl}{(\kern -2.5pt(}
\newcommand{\rcl}{)\kern -2.5pt)}
\newcommand{\Jac}{\scrJ\mathrm{ac}}
\renewcommand{\phi}{\upvarphi}
\newcommand{\ring}{\C\llangle x,y\rrangle}
\newcommand{\freering}{\C\langle x,y\rangle}
\newcommand{\ringtwo}{\C\llangle x,y\rrangle}
\newcommand{\commringtwo}{\C\llsq x,y\rrsq}
\newcommand{\kx}{\mathsf{r}}
\newcommand{\ky}{\mathsf{s}}
\newcommand{\ordP}{\mathsf{t}}
\newcommand{\vC}{\check{\mathrm{C}}}
\newcommand{\asf}{\mathsf{a}}
\newcommand{\bsf}{\mathsf{b}}
\newcommand{\csf}{\mathsf{c}}
\newcommand{\dsf}{\mathsf{d}}
\newcommand{\esf}{\mathsf{e}}
\newcommand{\fsf}{\mathsf{f}}
\newcommand{\gsf}{\mathsf{g}}
\newcommand{\hsf}{\mathsf{h}}
\newcommand{\ksf}{\mathsf{k}}
\newcommand{\msf}{\mathsf{m}}
\newcommand{\xsf}{\mathsf{x}}
\newcommand{\ysf}{\mathsf{y}}
\newcommand{\zsf}{\mathsf{z}}
\newcommand{\Bsf}{\mathsf{B}}
\newcommand{\Xsf}{\mathsf{X}}
\newcommand{\Ysf}{\mathsf{Y}}
\newcommand{\Zsf}{\mathsf{Z}}
\newcommand{\Gsf}{\mathsf{G}}
\newcommand{\Hsf}{\mathsf{H}}
\newcommand{\Ksf}{\mathsf{K}}
\newcommand{\Ssf}{\mathsf{S}}
\newcommand{\Tsf}{\mathsf{T}}
\newcommand{\Usf}{\mathsf{U}}
\renewcommand{\upxi}{\mathfrak{s}}	
\newcommand{\dfrak}{\mathfrak{d}}
\newcommand{\dd}{\mathds{D}}
\DeclareMathOperator{\diag}{diag}
\DeclareMathOperator{\Mat}{Mat}
\DeclareMathOperator{\scrEnd}{\EuScript{E}\rm nd}
\DeclareMathOperator{\scrHom}{\EuScript{H}\rm om}
\DeclareMathOperator{\Neck}{\EuScript{N}\rm eck}
\DeclareMathOperator{\Mono}{\EuScript{M}\rm ono}
\DeclareMathOperator{\NeckPoly}{\EuScript{N}}
\DeclareMathOperator{\Orb}{\EuScript{O}\mathrm{rb}}
\definecolor{Pink}{RGB}{230 56 243}
\definecolor{Blue}{RGB}{0 19 147}
\definecolor{Green}{RGB}{66 147 41}
\definecolor{Grey}{RGB}{102 102 102}
\definecolor{Orange}{RGB}{237 107 45}
\definecolor{Red}{RGB}{234 53 159}
\newcommand{\Scalecenter}[2][1]{\mathpalette\Scalecenter@{{#1}{#2}}}
\newcommand{\Scalecenter@}[2]{\Scalecenter@@#1#2}
\newcommand{\Scalecenter@@}[3]{%
  \vcenter{\hbox{\scalebox{#2}{$\m@th#1#3$}}}%
}
\DeclareMathOperator{\redbullet}{{\color{Orange}\bullet}}
\DeclareMathOperator{\bluebullet}{{\color{Blue}\Scalecenter[0.7]{\blacktriangle}}}
\DeclareMathOperator{\greenbullet}{{\color{Green}\Scalecenter[0.61]{\blacksquare}}}
\tikzset{
W/.style={circle,draw=black,circle,fill=white,inner sep=0pt, minimum size=4pt},
B/.style={circle,draw=black!80!white,circle,fill=Blue,inner sep=0pt, minimum size=4pt},
G/.style={circle,draw=black!80!white,circle,fill=Green,inner sep=0pt, minimum size=4pt},
Or/.style={circle,draw=black!80!white,circle,fill=Orange,inner sep=0pt, minimum size=4pt},
P/.style={circle,draw=black!80!white,circle,fill=Pink,inner sep=0pt, minimum size=4pt},
R/.style={circle,draw=Orange,line width=0.75pt,circle,fill=white,inner sep=0pt, minimum size=4pt},  
}
\def\centerarc[#1](#2)(#3:#4:#5)
\begin{document}
\title{Derived deformation theory of crepant curves}
\author{Gavin Brown}
\address{Gavin Brown, Mathematics Institute, Zeeman Building, University of Warwick, Coventry, CV4 7AL, UK.}
\email{G.Brown@warwick.ac.uk}
\author{Michael Wemyss}
\address{Michael Wemyss, School of Mathematics and Statistics,
University of Glasgow, University Place, Glasgow, G12 8QQ, UK.}
\email{michael.wemyss@glasgow.ac.uk}
\begin{abstract}
This paper determines the full derived deformation theory of certain smooth rational curves $\Curve$ in Calabi-Yau 3-folds, by determining all higher $\Ainf$-products in its controlling DG-algebra.   This geometric setup includes very general cases where $\Curve$ does not contract, cases where the curve neighbourhood is not rational, all known simple smooth $3$-fold flops, and all known divisorial contractions to curves.  As a corollary, it is shown that the noncommutative deformation theory of $\Curve$ is described via a superpotential algebra derived from what we call free necklace polynomials, which are elements in the free algebra obtained via a closed formula from combinatorial gluing data.  The description of these polynomials, together with the above results, establishes a suitably interpreted string theory prediction due to Ferrari \cite{Ferrari}, Aspinwall--Katz \cite{AspinwallKatz} and Curto--Morrison \cite{CM}.  Perhaps most significantly, the main results give both the language and evidence to finally formulate new contractibility conjectures for rational curves in CY 3-folds, which lift Artin's celebrated results \cite{Artin} from surfaces. 
\end{abstract}
\thanks{The authors were supported by EPSRC grant EP/R034826/1 and by the ERC Consolidator Grant 101001227 (MMiMMa).}
\maketitle

\setcounter{tocdepth}{1}
\tableofcontents

\section{Introduction}
Rational curves in Calabi--Yau 3-folds $\Curve\subset\scrX$ are fundamental building blocks of geometry, and yet some of even their most basic properties remain wide open.  The main reason, in contrast to when $\scrX$ is a surface, is that various key properties of  $\Curve$ in $\scrX$ including its formal neighbourhood, its contractibility and its deformation theory, are structural and not combinatorial.  
The controlling structure of all three properties now lies in homological and noncommutative algebra.

\medskip
The purpose of this paper is to describe the full, derived, deformation theory of $\Curve$ in $\scrX$, for the general curves described in \S\ref{subsec:geosetup} below.  This information is then used to (1) give the first general closed-formula description of the associated noncommutative deformation algebra of $\Curve$, (2) build, in the context of mirror symmetry, a B-side geometric model from the purely algebraic data of a given superpotential, and (3) use the evidence from these results, and others, to finally be in a position to formulate and conjecture contractibility criteria for formal neighbourhoods of curves in 3-folds.  Noncommutative deformations are the language and framework needed in order to lift Artin's 60-year old work \cite{Artin} from surfaces to 3-folds.

\subsection{Geometric Setup}\label{subsec:geosetup}
For many reasons, some explained in \S\ref{subsec:conjArt}, the crucial open and key benchmarking case for all the problems listed above is when $\Curve\cong\mathbb{P}^1$ is a single curve, with normal bundle $\scrO(-3)\oplus\scrO(1)$.  This is a vast set of curves, an uncharted zoo, known to contain an array of different and surprising geometric behaviour.  It is thus important to understand such curves, and to establish their deformation properties.

Deformation theory is local, and so this problem at once reduces to a local model, patching together two copies of affine space (after completion, when necessary).  In full generality, there is no known description of this model.  However inside this general situation sits a very large subset, which is expected to be representative of general behaviour. This is provided by the string theory literature \cite{Ferrari, Katz, CM}, where a smooth rational curve $\Curve$ inside a 3-fold $\scrX$ is built as follows.  

Set $\uplambda_{00}=\uplambda_{10}=\uplambda_{01}=0$, and choose finitely many scalars $\uplambda_{jk}$ where $j,k\geq 0$ and $j+k\geq 2$.  Given this choice of data, consider the scheme $\scrX$ defined by the two open  patches $\scrU_1=\mathbb{A}^3_{a,v_2,v_1}$ and $\scrU_2=\mathbb{A}^3_{b,w_2,w_1}$ glued along
\begin{equation}\label{eq!glue!intro}
(a,v_2,v_1)\stackrel{a\neq 0}{\longleftrightarrow}(a^{-1}, \,a^{-1}v_2, \,\,a^{3}v_1+\sum_{j,k\geq 0} \uplambda_{jk}a^{2-k}v_2^{j+k-1}).
\end{equation}
The locus $v_2=v_1=0$ glues with the locus $w_2=w_1=0$ to create a curve $\Curve\cong\mathbb{P}^1\subset\scrX$.  

Both the scheme $\scrX$, and the properties of the curve $\Curve\subset\scrX$, depend heavily on the choice of $\uplambda_{jk}$, but this is suppressed from  the notation.  We show in \ref{lem:normalbundle} that $\Curve\subset\scrX$ has normal bundle $\scrO(-3)\oplus\scrO(1)$ if and only if there are no small degree terms in \eqref{eq!glue!intro}.   Given our motivation, for the remainder of this introduction we assume that this condition holds.

\subsection{Free Necklace Polynomials}
To state our main results requires some new noncommutative objects, which we briefly summarise here. 
Given two integers $j,k\geq 0$ such that $n\colonequals j+k\geq 1$, consider the set $\Neck_{j,k}$ consisting of coloured $n$-gons, or necklaces, where there are precisely $j$ nodes shaded $\begin{tikzpicture} \node at (0,0) [G]{};\end{tikzpicture}$ and precisely $k$ unshaded nodes $\begin{tikzpicture} \node at (0,0) [R]{};\end{tikzpicture}$.  

The cyclic group $\mathbb{Z}_n$ acts on the set $\Neck_{j,k}$ via clockwise rotation by $2\uppi/n$, partitioning $\Neck_{j,k}$ into a set orbits, written $\Orb_{j,k}$.  To each $m\in\Orb_{j,k}$ we then prescribe a monomial $p_m$ in the free algebra $\freering$. This is achieved by choosing a representative of the orbit $m$, then starting in the bottom left corner and working clockwise, writing $x$ for $\begin{tikzpicture} \node at (0,0) [G]{};\end{tikzpicture}$ and $y$ for~$\begin{tikzpicture} \node at (0,0) [R]{};\end{tikzpicture}$.  
As an example, $\mathbb{Z}_6$ partitions the 15 elements of $\Neck_{4,2}$ into three orbits: the following illustrates  representatives of the three orbits, together with the corresponding monomial $p_m\in\freering$, and the number of elements in that orbit. 
\begin{center}
\begin{tikzpicture}[inner sep=0.5mm,place/.style={circle,draw=blue!50,fill=blue!20,thick}]
\node[name=a, regular polygon, regular polygon sides=6, minimum size=1.5cm, draw] at (0,0) {}; 
\node[name=b, regular polygon, regular polygon sides=6, minimum size=1.5cm, draw] at (3,0) {}; 
\node[name=c, regular polygon, regular polygon sides=6, minimum size=1.5cm, draw] at (6,0) {}; 
\centerarc[thick,<-](0,0)(-60:240:1.1);
\centerarc[thick,<-](3,0)(-60:240:1.1);
\centerarc[thick,<-](6,0)(-60:240:1.1);
\node at (a.corner 1) [G] {};
\node at (a.corner 2) [G] {};
\node at (a.corner 3) [G] {};
\node at (a.corner 4) [G] {};
\node at (a.corner 5) [R] {};
\node at (a.corner 6) [R] {};
\node at (b.corner 1) [R] {};
\node at (b.corner 2) [G] {};
\node at (b.corner 3) [G] {};
\node at (b.corner 4) [G] {};
\node at (b.corner 5) [R] {};
\node at (b.corner 6) [G] {};
\node at (c.corner 1) [G] {};
\node at (c.corner 2) [R] {};
\node at (c.corner 3) [G] {};
\node at (c.corner 4) [G] {};
\node at (c.corner 5) [R] {};
\node at (c.corner 6) [G] {};
\node at (0,-1.75) {$6$};
\node at (3,-1.75) {$6$};
\node at (6,-1.75) {$3$};
\node at (0,-1.25) {$xxxxyy$};
\node at (3,-1.25) {$xxxyxy$};
\node at (6,-1.25) {$xxyxxy$};
\end{tikzpicture} 
\end{center}
Now, given $j,k$, the \emph{free necklace polynomial} $\NeckPoly_{j,k}(x,y)$ is defined to be
\[
\NeckPoly_{j,k}(x,y)\colonequals  \frac{1}{j+k}\sum_{m\in\Orb_{j,k}}|m| \cdot p_m,
\]
which is a well defined element of $\freering$, up to cyclic permutation.  

For its commutative version, consider $\NeckPoly^{\,\ab}_{j,k}$, which is defined to be the image of $\NeckPoly_{j,k}$ under the abelianisation map $\freering\to\mathbb{C}[x,y]$.  The difference between $\NeckPoly_{j,k}$ and  $\NeckPoly^{\,\ab}_{j,k}$ becomes stark as $j+k$ increases.   To calibrate, $\NeckPoly_{4,2}=x^4y^2+x^3yxy+\frac{1}{2}x^2yx^2y$ whilst $\NeckPoly_{4,2}^{\,\ab}=\frac{5}{2}x^4y^2$.

\subsection{Main Results}
Given a curve $\Curve$ in $\scrX$ locally modelled on \eqref{eq!glue!intro}, as explained in \S\ref{sec:mixedDGA}--\ref{sec:DGAC} there is a DG-algebra $\scrC$ that controls the deformation theory of $\scrO_\Curve$, in the sense that its $\Ainf$-products describe the prorepresenting hull of the deformation functor.  There are many such abstract models for $\scrC$, with the challenge being to construct one where the $\Ainf$-products can be calculated.  

Consider the graded vector space $\mathsf{A}=\bigoplus_{i\in\mathbb{Z}}\mathsf{A}_i$ where
\[
\mathsf{A}_i=
\begin{cases}
\mathbb{C} & \mbox{if }i=0,3\\
\mathbb{C}^2 & \mbox{if }i=1,2\\
0&\mbox{else}.
\end{cases}
\]
Write $\xsf,\ysf$ for the basis of $\mathsf{A}_1$, and $\Xsf,\Ysf$ for the basis of $\mathsf{A}_2$, and $\upxi$ for the basis of $\mathsf{A}_3$. The following is our main result.
\begin{thm}[\ref{thm: main}]\label{thm: main intro}
Given a curve $\Curve$ in $\scrX$ locally modelled on \eqref{eq!glue!intro},  the following defines an $\Ainf$-structure $\{\mathsf{m}_n\}_{n\geq 2}$ on the vector space $\mathsf{A}$ above. Furthermore,  the resulting $\Ainf$-algebra is quasi-isomorphic to the \textnormal{DG}-algebra $\scrC=\RHom_\scrX(\scrO_\Curve,\scrO_\Curve)$.
\begin{enumerate}
\item\label{intro:one} For any $n\geq 2$ and any decomposition $n=j+k$ with $j,k\geq 0$,
\[
\msf_{n}(\underbrace{\mathsf{x},\hdots,\mathsf{x}}_j,\underbrace{\mathsf{y},\hdots,\mathsf{y}}_k)=\uplambda_{j+1,k}\,\Xsf+\uplambda_{j,k+1}\Ysf,
\] 
where the $\uplambda$'s are the coefficients from the glue in \eqref{eq!glue!intro}.  
\item\label{intro:two}  More generally, $\msf_n$ with $n\geq 2$ applied only to degree one inputs (so, combinations of $\mathsf{x}$ and $\mathsf{y}$) does not depend on the order of those degree one inputs, and thus is determined by \eqref{intro:one} above. 
\item
The only other non-zero products are 
\[
\begin{array}{lccl}
\msf_2(\xsf,\Xsf) = -\upxi &\quad&\msf_2(\ysf,\Ysf) = -\upxi\phantom{.}   \\
\msf_2(\Xsf,\xsf) = \phantom{-}\upxi &&\msf_2(\Ysf,\ysf) = \phantom{-}\upxi .
\end{array}
\]\end{enumerate}
\end{thm}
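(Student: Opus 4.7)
\smallskip
The plan is to construct an explicit DG-algebra model $\scrC'$ quasi-isomorphic to $\scrC = \RHom_\scrX(\scrO_\Curve, \scrO_\Curve)$, then extract its minimal $\Ainf$-structure on $\mathsf{A} = H^*(\scrC')$ via Kadeishvili transfer. The natural candidate is a \v{C}ech--Koszul bicomplex on the two-patch cover $\{\scrU_1, \scrU_2\}$ of \S\ref{subsec:geosetup}: on each $\scrU_i$, replace $\scrO_\Curve$ by the Koszul resolution against the regular sequence $(v_1, v_2)$ (resp.\ $(w_1, w_2)$), and assemble the local $\scrEnd$-DGAs into a \v{C}ech complex whose transition morphism is dictated by the gluing in \eqref{eq!glue!intro}. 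A standard computation identifies the cohomology with $\mathsf{A}$: $\Ext^0 = \C$; $\Ext^1 = H^0(N_\Curve) = H^0(\scrO(1)) = \C^2$ from the $\scrO(1)$-summand of the normal bundle; $\Ext^2 = H^1(N_\Curve) = H^1(\scrO(-3)) = \C^2$; and $\Ext^3 = \C$ by Calabi--Yau Serre duality. The degree-$1$ classes $\xsf, \ysf$ are represented by the two sections $1$ and $a$ of $\scrO(1)$ along the $v_2$-normal direction, while $\Xsf, \Ysf$ are \v{C}ech $1$-cocycles picking out the two non-extendable Laurent monomials in $H^1(\scrO(-3))$, and $\upxi$ generates the CY top class.

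\smallskip
With representatives in hand, the products in (1) can be read off directly. A first-order deformation $t\,\xsf + s\,\ysf$ corresponds to setting $v_2 = \epsilon(t+sa)$ with $v_1 = 0$ on $\scrU_1$; extending to order $\epsilon^n$ forces one to solve a \v{C}ech gluing equation whose obstruction is the $\epsilon^n$-contribution to $\sum \uplambda_{pq}\, a^{2-q}\, (\epsilon(t+sa))^{p+q-1}$. Since the power of $\epsilon$ here is $p+q-1$, the $n$-th order obstruction picks up precisely those $\uplambda_{pq}$ with $p+q = n+1$; expanding the binomial $(t+sa)^{p+q-1}$ and extracting the two non-extendable Laurent classes yields the claimed coefficients $\uplambda_{j+1,k}$ on $\Xsf$ and $\uplambda_{j,k+1}$ on $\Ysf$. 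The symmetry in (2) is then manifest: the first-order data $t+sa$ is a polynomial in the commutative coordinate $a$, so raising it to any power gives the familiar multinomial expansion, independent of input ordering. The products in (3) come from the standard cup product on $\scrEnd$, pairing each degree-$1$ class with its Serre-dual degree-$2$ class to produce the generator $\upxi$ of $\Ext^3$, with signs dictated by the CY cyclic structure.

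\smallskip
The main obstacle will be proving that \emph{no other} higher products appear beyond those listed. Kadeishvili's tree sum is a priori rich, and without additional structure one might collect spurious contributions when mixing degree-$1$ and degree-$2$ inputs. The cleanest way to control these, following Kajiura--Stasheff, is to invoke the cyclic $\Ainf$-structure induced by Calabi--Yau Serre duality: the minimal model is then determined by a single superpotential $W$, and it suffices to compute $W$ and read off its cyclic partial derivatives. The combinatorial input for $W$ is exactly the free necklace polynomial $\NeckPoly_{j,k}$, which encodes the cyclic arrangement of $\xsf$'s and $\ysf$'s around each gluing monomial in \eqref{eq!glue!intro}. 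This simultaneously delivers the products of (1)--(3) in one compact formula, and confirms that no additional higher products survive.
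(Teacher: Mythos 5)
Your proposal has the right general architecture (a \v{C}ech-style DG-model plus Kadeishvili transfer), but the two steps where you claim the products can be ``read off'' are precisely where the real difficulty sits, and as written they do not close. First, the obstruction-calculus argument (setting $v_2=\epsilon(t+s a)$, $v_1=0$ and extracting the order-$\epsilon^n$ obstruction) only sees the commutative deformation functor, i.e.\ the \emph{symmetrised} information already known to Katz--Namba; it cannot distinguish $\msf_n$ evaluated on different orderings of $\xsf$ and $\ysf$, so part (2) is not ``manifest'' from the commutativity of $a$ --- on the chain level the inputs to Kadeishvili's tree sum are genuinely non-commuting cochains, and the order-independence is a delicate cancellation (in the paper it occupies Claim B inside the induction of \ref{thm:Umaintext}, using the anti-commutation relations \ref{cor:xandkanticommute} and the specific choices \eqref{eqn:f2degall} of $\fsf_2$). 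Relatedly, your proposed model needs explicit comparison quasi-isomorphisms between the two Koszul resolutions on $\scrU_{12}$ dictated by \eqref{eq!glue!intro}; producing representatives and homotopies uniform enough to run the transfer is exactly the problem the paper solves differently, by constructing one global locally free resolution (\ref{thm: locallyfreeres}) whose matrices contain the polynomials $A,B$, so that all homotopies ($\ksf_i$, $\esf_{i,k}$, $\Ksf_i$) can be written down once and for all.

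Second, the appeal to cyclicity to rule out all remaining products is circular. Granting a cyclic minimal model (itself requiring justification here, since $\scrX$ is non-proper and one must check the CY pairing on $\Ext^*(\scrO_\Curve,\scrO_\Curve)$ is compatible with a cyclic structure), cyclicity only says the structure is encoded by a superpotential $W$; it does not tell you that $W=\sum\uplambda_{jk}\NeckPoly_{j,k}$. Determining $W$ is equivalent to computing $\msf_n$ on all orderings of degree-one inputs, which is parts (1)--(2) --- the very thing your first step failed to deliver beyond its symmetrisation. The paper deliberately avoids this route: cyclicity is observed \emph{a posteriori} as a consequence of the computed products, not used as an input, and the vanishing of mixed-degree higher products is proved directly (\ref{lem:fstarf}, \ref{prop:Un0}) from degree bookkeeping and the explicit ``simple'' form of the chosen $\fsf_\ell$, not from duality. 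To repair your argument you would need either an honest chain-level computation of the non-symmetric products in your \v{C}ech--Koszul model, or an independent proof that the superpotential of the (assumed) cyclic structure has the necklace form --- neither of which is supplied.
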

Retrospectively, the fact that the order of the inputs does not matter in \ref{thm: main intro}\eqref{intro:two} should be viewed as the remnants of all the data being determined from the geometric setup \eqref{eq!glue!intro}, which is described from commutative gluing data.

\medskip 
There are two main remarkable aspects of \ref{thm: main intro}.  Perhaps the first is that describing the full $\Ainf$-structure is possible at all, never mind with such precision on the coefficients~$\uplambda_{jk}$.  The second is that the proof goes counter to expectations: we do not assume the abstract existence of an $\Ainf$-structure then argue it has nice properties. Instead, the $\Ainf$-structure is constructed from the ground up, using the Kadeishvili algorithm \cite{Kadeishvili}.  One of the main novelties is that it is possible to construct, in \S\ref{sec: construct locally free res},  a uniform locally free resolution $\scrE$ of $\scrO_{\Curve}$. In turn this allows us to choose a uniform basis, which makes the computation of the full $\Ainf$-structure possible.

\medskip
Although the $\Ainf$-structure in \ref{thm: main intro} turns out to be a cyclic $\Ainf$-structure in the sense of Kontsevich--Soibelman \cite{KS}, this is a consequence of the proof, not an input.  It is also unsurprising that there is some reasonable description of at least part of the $\Ainf$-structure, given that the physics literature  \cite{AspinwallKatz,CM, Ferrari} predicts both a commutative and also some form of `matrix' (=noncommutative) superpotential.  As sketched in \S\ref{subsec:physics}, the matrix prediction of \cite{Ferrari} turns out to be consistent with, and mathematically explained by, the main result \ref{thm: main intro}. Much of this paper, and our broader work, is inspired by this physics prediction.

\subsection{Corollaries}
The first main consequences of \ref{thm: main intro} are to deformation theory.  Given a curve $\Curve$ in $\scrX$ locally described by \eqref{eq!glue!intro}, the sheaf $\scrO_\Curve\in\coh\scrX$ has an associated noncommutative (NC) deformation functor, recalled in \S\ref{subsec:NCdeftheory}. This functor always admits a prorepresenting hull, $\Lambda_{\mathrm{def}}$, called the NC deformation algebra (see \S\ref{subsec:NCdeftheory}).

Abstractly $\Lambda_{\mathrm{def}}$ is always a superpotential algebra, however describing the superpotential has been a key open question.  The following gives the first closed-formula description.
\begin{cor}[\ref{cor:NCdefs}]\label{cor:mainintro}
The \textnormal{NC} deformation algebra $\Lambda_{\mathrm{def}}$ of $\scrO_\Curve\in\coh\scrX$ is described by
\[
\Lambda_{\mathrm{def}}\cong\Jac(\mathsf{W})=\frac{\ringtwo}{\lcl\dcyc_{x}\mathsf{W}, \dcyc_{y}\mathsf{W}\rcl}
\]
where $\mathsf{W}=\sum\uplambda_{jk}\NeckPoly_{j,k}\in\ringtwo$ is the sum of free necklace polynomials, and the $\uplambda_{jk}$ are the data in the glue \eqref{eq!glue!intro}.
\end{cor}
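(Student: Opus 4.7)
The plan is to apply the general Koszul-type duality that realises $\Lambda_{\mathrm{def}}$ as the prorepresenting hull of the Maurer--Cartan functor for the minimal $\Ainf$-algebra $\Ext^*(\scrO_\Curve,\scrO_\Curve)$. For a single object with two-dimensional $\Ext^1$ and $\Ext^2$, this hull is $\Lambda_{\mathrm{def}}\cong\ringtwo/\lcl R_\Xsf,R_\Ysf\rcl$, where $x,y\in\ringtwo$ are dual to $\xsf,\ysf\in\Ext^1$, and each $R_Z$ (for $Z\in\{\Xsf,\Ysf\}$) is the component on $Z^*\in(\Ext^2)^*$ of the formal series $\sum_{n}\msf_n(\eta,\ldots,\eta)$ for the universal element $\eta=\xsf x+\ysf y$. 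Feeding \ref{thm: main intro}(1)--(2) into this formula, and using crucially the symmetry of $\msf_n$ in its degree-one inputs, gives
\[
R_\Xsf=\sum_{\substack{j\geq 1,\,k\geq 0\\ j+k\geq 2}}\uplambda_{j,k}\,S_{j-1,k},\qquad R_\Ysf=\sum_{\substack{j\geq 0,\,k\geq 1\\ j+k\geq 2}}\uplambda_{j,k}\,S_{j,k-1},
\]
where $S_{p,q}\in\ringtwo$ denotes the sum of all $\binom{p+q}{p}$ words in $x,y$ with $p$ copies of $x$ and $q$ copies of $y$.

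The corollary then reduces to the combinatorial identity $\dcyc_x\NeckPoly_{j,k}=S_{j-1,k}$ in $\ringtwo$ (and its $y$-analogue), which once established yields $R_\Xsf=\dcyc_x\mathsf{W}$ and $R_\Ysf=\dcyc_y\mathsf{W}$. To prove it, fix a word $v$ of length $j+k-1$ with $j-1$ copies of $x$ and $k$ copies of $y$, and note that $vx$ lies in a unique orbit $m\in\Orb_{j,k}$. Writing $|m|$ for the size of this orbit and $d=(j+k)/|m|$ for the order of its cyclic stabiliser, for any chosen representative $p_m$ the word $v$ appears in $\dcyc_x p_m$ with multiplicity equal to the number of indices $i\in\Z_{j+k}$ whose rotation of $p_m$ equals $vx$ --- namely $d$. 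The rational weight $|m|/(j+k)$ in the definition $\NeckPoly_{j,k}=\tfrac{1}{j+k}\sum_m|m|\,p_m$ therefore cancels precisely against $d$, so the coefficient of $v$ in $\dcyc_x\NeckPoly_{j,k}$ is exactly $1$, proving the identity. Summing against $\uplambda_{j,k}$ then gives $R_\Xsf=\dcyc_x\mathsf{W}$, and symmetrically $R_\Ysf=\dcyc_y\mathsf{W}$.

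The non-routine input is this orbit--stabiliser balance: the apparently peculiar rational weights $|m|/(j+k)$ in the definition of $\NeckPoly_{j,k}$ are precisely what ensures that the cyclic derivative returns each noncommutative word with coefficient one. The first paragraph is entirely standard but requires the correct Koszul-dual description of the NC prorepresenting hull in terms of $\Ainf$-products; moreover, the cyclicity of the $\Ainf$-structure noted after \ref{thm: main intro} guarantees a priori that $R_\Xsf$ and $R_\Ysf$ arise as the two cyclic partial derivatives of a single element $\mathsf{W}\in\ringtwo$, so the whole computation collapses to identifying this common superpotential as $\sum\uplambda_{jk}\NeckPoly_{j,k}$.
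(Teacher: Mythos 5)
Your proposal is correct and follows essentially the same route as the paper: the paper likewise presents $\Lambda_{\mathrm{def}}$ as $\ringtwo$ modulo the relations obtained by dualising $\sum_{i\ge 2}\msf_i$ on degree-one inputs (citing the standard Kawamata/Toda description, which is your Maurer--Cartan formulation in different clothing), and then identifies those relations with $\dcyc_x\mathsf{W},\dcyc_y\mathsf{W}$ via the identity $\dcyc_x\NeckPoly_{j+1,k}=\Mono_{j,k}$ of \ref{lem:NecksDiff}. The only difference is presentational: your orbit--stabiliser cancellation is a spelled-out version of the paper's one-line proof of that lemma (and note the rotation contributing $v$ is the one equal to $xv$ rather than $vx$, though this does not change the count).
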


The above then immediately implies a classical 1972 result of Namba \cite{Namba, Katz},  proved using complex analysis methods and the existence of Kuranishi spaces.  As is standard, taking the abelianisation of $\Lambda_{\mathrm{def}}$, which just means formally commuting the variables, recovers commutative deformations of $\scrO_\Curve$.  After formally commuting variables, each $p_m$ in $\NeckPoly_{j,k}(x,y)$ becomes $x^jy^k$, and so $\NeckPoly_{j,k}(x,y)^{\ab}=\frac{1}{j+k}{j+k \choose k}x^jy^k$.  

\begin{cor}[\ref{cor:Cdefs}, Namba, Katz]
Classical commutative deformations of $\scrO_\Curve\in\coh\scrX$ are prorepresented by
\[
\Lambda_{\mathrm{def}}^{\ab}\cong\Jac(\mathsf{W})^{\ab}=\frac{\commringtwo}{(\dcyc_{x}\mathsf{V}, \dcyc_{y}\mathsf{V})}
\]
where $\mathsf{V}=\sum\uplambda_{jk}\NeckPoly_{j,k}^{\,\ab}=\sum\frac{\uplambda_{jk}}{j+k}{j+k \choose k}x^jy^k$.
\end{cor}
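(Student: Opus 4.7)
The plan is to obtain this corollary as an essentially formal consequence of \ref{cor:NCdefs}. The classical commutative deformation functor of $\scrO_\Curve$ is the restriction of the noncommutative deformation functor $\DefNC_{\scrO_\Curve}$ along the inclusion $\cart\hookrightarrow\art$; by the universal property of abelianisation, any morphism from $\Lambda_{\mathrm{def}}$ into a commutative test ring factors uniquely through $\Lambda_{\mathrm{def}}^{\ab}$, so the prorepresenting hull of this restriction is precisely $\Lambda_{\mathrm{def}}^{\ab}$. Combined with \ref{cor:NCdefs}, this reduces the statement to identifying $\Jac(\mathsf{W})^{\ab}$ with the commutative Jacobi algebra on the right-hand side.

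That identification splits into two short observations. First, $\mathsf{W}^{\ab}=\mathsf{V}$: the abelianisation map $\freering\to\commringtwo$ collapses every monomial $p_m$ with $j$ copies of $x$ and $k$ copies of $y$ to $x^jy^k$ regardless of the ordering, and since the $\mathbb{Z}_{j+k}$-orbits partition $\Neck_{j,k}$ one obtains
\[
\NeckPoly_{j,k}^{\,\ab}=\frac{1}{j+k}\sum_{m\in\Orb_{j,k}}|m|\cdot x^jy^k=\frac{|\Neck_{j,k}|}{j+k}\,x^jy^k=\frac{1}{j+k}\binom{j+k}{j}x^jy^k,
\]
using the elementary count $|\Neck_{j,k}|=\binom{j+k}{j}$ of labelled coloured $(j+k)$-gons. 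Summing over $(j,k)$ gives $\mathsf{W}^{\ab}=\mathsf{V}$. Second, cyclic differentiation commutes with abelianisation: for any monomial of length $n$ containing $j$ letters $x$, the cyclic derivative $\dcyc_x$ is a sum of $j$ words that each abelianise to $x^{j-1}y^k$, which is precisely $\partial_x(x^jy^k)$. Applied termwise, the two-sided ideal $\lcl\dcyc_x\mathsf{W},\dcyc_y\mathsf{W}\rcl$ in $\ringtwo$ descends to the ordinary commutative ideal $(\dcyc_x\mathsf{V},\dcyc_y\mathsf{V})$ in $\commringtwo$, giving the desired isomorphism.

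There is no genuine obstacle here: the deep input is \ref{cor:NCdefs}, and what remains is purely combinatorial bookkeeping together with the formal statement about prorepresenting hulls restricting to commutative test rings. The only point worth verifying carefully is the universal property reduction in the first paragraph, which one can phrase either via the factorisation $\Lambda_{\mathrm{def}}\twoheadrightarrow\Lambda_{\mathrm{def}}^{\ab}$ of any commutative-valued map, or equivalently by observing that abelianisation is left adjoint to the inclusion $\cart\hookrightarrow\art$ and so preserves pro-representability of the restricted functor.
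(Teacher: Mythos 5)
Your proposal is correct and follows essentially the same route as the paper: the paper's proof is a one-line deduction from \ref{cor:NCdefs} together with the standard fact that the commutative versal space is the abelianisation of the noncommutative one (cited to Toda), and the identity $\NeckPoly_{j,k}^{\,\ab}=\tfrac{1}{j+k}\binom{j+k}{k}x^jy^k$ is already recorded in the introduction. Your extra details --- the adjunction/factorisation argument for why the hull of the restricted functor is $\Lambda_{\mathrm{def}}^{\ab}$, and the check that cyclic differentiation is compatible with abelianisation --- simply spell out what the paper leaves to the cited references.
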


The other consequence of \ref{thm: main intro} is categorical.  Given a quiver with superpotential $(Q,\mathsf{W})$, Ginzburg associates a 3-CY category $\scrD_\mathsf{W}$.  It is a basic question in mirror symmetry to find geometric models for such categories, on both the A- and B-sides.  
\begin{cor}[\ref{cor:mirror}]\label{cor:mirror intro}
Let $\mathsf{W}\in\mathbb{C}\langle x,y\rangle
$ and consider the associated \textnormal{3-CY} category $\scrD_\mathsf{W}$.  If there exist scalars $\uplambda_{jk}$ for which $\mathsf{W}=\sum\uplambda_{jk}\NeckPoly_{j,k}$,  then there exists a \textnormal{3}-fold $\scrX$ and smooth rational curve $\Curve\subset\scrX$ such that 
\[
\Db(\coh\scrX)\supset \langle \scrO_\Curve\rangle\cong \scrD_\mathsf{W}.
\]
\end{cor}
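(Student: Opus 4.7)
The plan is to reverse-engineer the geometry from $\mathsf{W}$. Given $\mathsf{W}=\sum\uplambda_{jk}\NeckPoly_{j,k}$, I would take the same coefficients $\uplambda_{jk}$ as the data in the glue \eqref{eq!glue!intro}, producing a Calabi--Yau $3$-fold $\scrX$ containing a smooth rational curve $\Curve\cong\P^1$ (with normal bundle $\scrO(-3)\oplus\scrO(1)$ under the no-small-degree hypothesis).  By \ref{thm: main intro}, the DG-algebra $\scrC=\RHom_\scrX(\scrO_\Curve,\scrO_\Curve)$ then has an explicit minimal $\Ainf$-model $(\mathsf{A},\msf_\bullet)$ whose higher products are prescribed directly by these same $\uplambda_{jk}$.

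The heart of the argument will be matching $(\mathsf{A},\msf_\bullet)$ with the derived endomorphism $\Ainf$-algebra of the simple module $S$ at the vertex of the Ginzburg DG-algebra of the one-vertex quiver with loops $x,y$ and superpotential $\mathsf{W}$.  Standard Koszul duality (Keller, Van den Bergh) equips $\Ext^*(S,S)$ with a cyclic $3$-CY $\Ainf$-structure whose degree-one higher products encode $\mathsf{W}$ via the formula
\[
\msf_n(z_{i_1}^*,\dots,z_{i_n}^*)=\sum_{z\in\{x,y\}}\bigl(\text{coefficient of }z_{i_1}\cdots z_{i_n}\text{ in }\dcyc_z\mathsf{W}\bigr)\cdot z^{**}.
\]
The combinatorial bridge between this and \ref{thm: main intro} is the identity
\[
\dcyc_x\NeckPoly_{j,k}=\sum_{w}w,
\]
summed over all $w\in\freering$ with $j-1$ $x$'s and $k$ $y$'s, which I would prove by a short insertion count: each output monomial of length $j+k-1$ arises exactly once from reinserting the removed $x$. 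Consequently the coefficient of any such word in $\dcyc_x\mathsf{W}$ is simply $\uplambda_{j,k}$, matching \ref{thm: main intro}(1) precisely; the fact that this coefficient depends only on the \emph{content} of the word, not its order, is exactly \ref{thm: main intro}(2).

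Granted the $\Ainf$-matching, the categorical conclusion is formal.  On the geometric side, Keller's reconstruction identifies $\langle\scrO_\Curve\rangle\subset\Db(\coh\scrX)$ with $\Perf(\mathsf{A},\msf_\bullet)$; on the algebraic side, $\scrD_\mathsf{W}$ is by construction $\Perf$ of the Ginzburg DG-algebra, equivalently of the minimal $\Ainf$-model on $\Ext^*(S,S)$.  A quasi-isomorphism of $\Ainf$-algebras then induces the desired triangle equivalence. The hardest part will be pinning down the Koszul-duality dictionary for a quiver with a \emph{general} (not merely cubic) superpotential, and confirming that the cyclic/$3$-CY pairing of Kontsevich--Soibelman aligns with the $\msf_2$ products of \ref{thm: main intro}(3) up to sign; the necklace identity above is the combinatorial lynchpin making the coefficient-by-coefficient comparison work.
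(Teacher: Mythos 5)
Your proposal follows essentially the same route as the paper: construct $\scrX$ from the very same scalars $\uplambda_{jk}$, invoke the main theorem for the $\Ainf$-minimal model of $\RHom_\scrX(\scrO_\Curve,\scrO_\Curve)$, and compare it with the $\Ainf$-structure on $\Ext^*(S,S)$ for the unique simple of the Ginzburg DGA, which by Keller is encoded by the superpotential on degree-one inputs, the coefficient match being exactly the identity $\dcyc_x\NeckPoly_{j,k}=\Mono_{j-1,k}$ that the paper records as \ref{lem:NecksDiff}. One small correction: $\scrD_\mathsf{W}$ is $\scrD_{\mathrm{fd}}$ of the Ginzburg DGA $\Upgamma$, not its perfect derived category --- it is $\scrD_{\mathrm{fd}}(\Upgamma)=\mathrm{thick}(S)$ that is identified with perfect modules over the minimal $\Ainf$-model of $\Ext^*_\Upgamma(S,S)$ --- but since that is the category you actually compare with $\langle\scrO_\Curve\rangle$, the argument stands.
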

The above gives the first hint that perhaps not all superpotentials can be realised, whilst simultaneously not giving any hint on what general reasonable restrictions might be, given that necklace polynomials are not preserved under automorphisms.  After imposing strong extra conditions such as $\dim_\mathbb{C}\Lambda_{\mathrm{def}}<\infty$, the realisation problem is expected to be true \cite[1.11]{BW2}.

\subsection{Contractibility}\label{subsec:conjArt}
In 1962 Artin \cite{Artin} established that the contractibility of curves in smooth surfaces is (suitably locally) a combinatorial problem.  This is no longer true for 3-folds. It has been an open question as to what should replace this in higher dimension, and only special cases such as smooth curves currently have a criterion \cite{Jimenez}.

Corollary~\ref{cor:mainintro} gives a presentation of $\Lambda_{\mathrm{def}}$.  Based on empirical evidence from many computer algebra calculations \cite{magma} on these presentations, and from theoretical evidence in \cite{BW2}, we conjecture that in the 3-fold setting Artin's combinatorics are replaced by numerical properties of $\Lambda_{\mathrm{def}}$. One of the advantages of the NC deformation theory technology is that it finally permits clean statements for multi-curves.

\begin{conj}[`3-fold Artin contractibility']\label{conj1}
Let $\Curve$ be a connected union of $n$ smooth rational curves in a \textnormal{CY} \textnormal{3}-fold, and write $\Lambda_{\mathrm{def}}$ for its multi-pointed \textnormal{NC} deformation algebra. Then, in a formal neighbourhood of $\Curve$, the following are equivalent.
\begin{enumerate}
\item $\Curve$ contracts to a point, with an isomorphism elsewhere.
\item $\dim_\mathbb{C}\Lambda_{\mathrm{def}}<\infty$.
\end{enumerate}
\end{conj}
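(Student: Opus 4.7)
The plan is to leverage \ref{cor:mainintro} (together with its multi-pointed extension that the conjecture implicitly requires) to translate geometric contractibility into a purely algebraic statement about the Jacobi algebra $\Lambda_{\mathrm{def}}=\Jac(\mathsf{W})$ built from the gluing coefficients $\uplambda_{jk}$. Once restated this way, the conjecture reads: the necklace superpotential $\mathsf{W}=\sum\uplambda_{jk}\NeckPoly_{j,k}$ has finite-dimensional Jacobi algebra if and only if the patched $3$-fold formally contracts $\Curve$ to a point.  This shifts the burden from inaccessible local geometry onto the superpotential, where the full higher $\Ainf$-data of \ref{thm: main intro} is available.

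\textbf{The direction (1) $\Rightarrow$ (2).}  Assume $\Curve$ contracts via $f\colon \scrX\to\Spec R$ with $\Rf_*\scrO_\scrX=\scrO_R$ and isomorphism off the closed point $\m\in\Spec R$.  Working formally, $R$ is a complete Noetherian local $\C$-algebra of Krull dimension~$3$.  Van den Bergh's tilting package produces a bundle $\scrE=\scrO_\scrX\oplus\scrN$ that tilts to $\Lambda=\End_\scrX(\scrE)$, a noncommutative (partial) crepant resolution of $R$. The associated contraction algebra $\Lambda_{\con}\colonequals\Lambda/\Lambda e_0\Lambda$ prorepresents multi-pointed NC deformations of $\bigoplus\scrO_{\Curve_i}$, and therefore coincides with $\Lambda_{\mathrm{def}}$.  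Since $\scrO_\Curve$ is set-theoretically supported at $\m$, the $R$-action on $\Lambda_{\con}$ factors through $R/\m^N$ for some $N\gg 0$, giving $\dim_\C\Lambda_{\mathrm{def}}<\infty$.

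\textbf{The direction (2) $\Rightarrow$ (1).}  This is the genuinely hard direction, as one must manufacture a morphism from purely algebraic data.  My approach would be to use \ref{thm: main intro} and \ref{cor:mainintro} to build a candidate base ring $R$ out of the commutative invariants of $\Lambda_{\mathrm{def}}$, for instance via a suitable completion of the centre of a Cohen--Macaulay module category naturally attached to $\Lambda_{\mathrm{def}}$, and simultaneously to upgrade the formal gluing data of \eqref{eq!glue!intro} to an actual morphism $\scrX\to\Spec R$ using the tilting summand $\scrO_\scrX\subset\scrE$.  The strategy would be to show that $\dim_\C\Lambda_{\mathrm{def}}<\infty$ forces $R$ to be Noetherian of Krull dimension~$3$, Gorenstein, and sufficiently rich to separate points away from the putative exceptional locus $\Curve$; the higher $\Ainf$-products of \ref{thm: main intro} then play the role of certifying both the correct singularity type of $R$ and the fact that the map contracts precisely $\Curve$.

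\textbf{The main obstacle.}  The chief difficulty is that finiteness of $\dim_\C\Lambda_{\mathrm{def}}$ is a very weak input: it is not \emph{a priori} clear that it supplies enough central invariants to cut out a $3$-dimensional base, nor that any resulting map is an isomorphism off a single point rather than collapsing more than $\Curve$.  This is precisely where Artin's combinatorial intersection-theoretic arguments break down on $3$-folds, and where any proof must bootstrap the full higher $\Ainf$-structure, not merely the underlying associative algebra, into geometric control of the base.  A secondary subtlety, visible only in the multi-curve case, is that a partial contraction collapsing only some $\Curve_i$ could be consistent with finite dimensionality of a proper quotient of $\Lambda_{\mathrm{def}}$; the conjecture therefore requires a careful analysis of the idempotent structure of the multi-pointed Jacobi algebra to rule this out, and this is the aspect on which I would expect the deepest technical work to concentrate.
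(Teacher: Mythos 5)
The statement you have attempted is \ref{conj1}, which the paper presents as a \emph{conjecture}, not a theorem: the authors state explicitly that the implication (1)$\Rightarrow$(2) is known by \cite{DW1}, that \cite[1.10]{BW2} settles (2)$\Rightarrow$(1) for single curves in all normal bundle cases except $\scrO(-3)\oplus\scrO(1)$, and that the results of this paper ``give strong evidence towards, but do not quite yet prove, that final case.'' There is therefore no proof in the paper to compare against, and a complete argument would be a substantial new result rather than a routine verification.

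Measured against that, your proposal has a genuine gap. The (1)$\Rightarrow$(2) half is essentially the known route (identify $\Lambda_{\mathrm{def}}$ with the contraction algebra of \cite{DW1} via tilting, then observe it is supported at the closed point of the base, hence finite dimensional); this is fine but is exactly the direction the paper already cites as established. The (2)$\Rightarrow$(1) half, however, is not a proof but a statement of intent: phrases such as ``my approach would be'' and ``the strategy would be to show'' replace every step that actually matters. You never construct the candidate base ring $R$ (there is no known mechanism for extracting a $3$-dimensional commutative base from the finite-dimensional algebra $\Lambda_{\mathrm{def}}$ alone, and the ``centre of a Cohen--Macaulay module category attached to $\Lambda_{\mathrm{def}}$'' is not a defined object here), you never produce a morphism $\scrX\to\Spec R$, and you never argue that such a morphism would be an isomorphism away from $\Curve$ and contract precisely $\Curve$. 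Indeed your own ``main obstacle'' paragraph concedes that these are the open points; conceding them does not close them. As written, the proposal establishes nothing beyond the already-known implication, and it should be presented as a discussion of why the conjecture is hard, not as a proof of it.
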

This conjecture is a reinterpretation of Artin's results for surfaces, where crepant curves contract if and only if they are in ADE formation. The NC deformation algebra in this case is the preprojective algebra, which is finite dimensional if and only if the curve configuration is ADE.  Note also for 3-folds that the conjecture is numerical, but not combinatorial: to illustrate this, whilst the superpotential determined in \ref{thm: main intro} is a combinatorial object, determining whether the numerical condition $\dim_\mathbb{C}\Lambda_{\mathrm{def}}<\infty$ holds is much more subtle.

\medskip
The direction ($\Rightarrow$) of \ref{conj1} is known, by \cite{DW1}.  In the case of single curves,  \cite[1.10]{BW2} shows that the only case of ($\Leftarrow$) that remains open is when the normal bundle is $\scrO(-3)\oplus\scrO(1)$. The results of this paper give strong evidence towards, but do not quite yet prove, that final case.

\subsection*{Acknowledgements}
The authors thank Yujiro Kawamata and Franco Rota for helpful conversations, and the referee for many helpful and clarifying remarks.

\subsection*{Conventions}
We work over an algebraically closed field $\mathbb{K}$.  Commas will be suppressed on the subscripts on scalars wherever possible, so e.g.\ $\uplambda_{j,k}=\uplambda_{jk}$.  A further restriction to $\mathbb{C}$, or any algebraically closed field of characteristic zero, is made in \S\ref{sec:DefThCor}.   This is needed for the deformation theory to work nicely, and also to be able to integrate relations to obtain a superpotential, where the free necklace polynomials first appear.

\section{Setup and Preliminaries}

This section introduces $\Curve\subset\scrX$, describes its basic properties, and sets notation that will be used throughout.

\subsection{Gluing of a \texorpdfstring{$(-3,1)$}{(-3,1)} neighbourhood}
Choose finitely many non-zero scalars $\uplambda_{jk}\in\mathbb{K}$ for all $j,k\geq 0$, where $\uplambda_{0,0}=\uplambda_{1,0}=\uplambda_{0,1}=0$, and consider the scheme $\scrX$ defined by gluing the two open  patches 
$\scrU_1=\mathbb{A}^3_{a,v_2,v_1}$ and
$\scrU_2=\mathbb{A}^3_{b,w_2,w_1}$ along
\begin{equation}\label{eq!glue}
(a,v_2,v_1)\stackrel{a\neq 0}{\longmapsto}(a^{-1}, \,a^{-1}v_2, \,\,a^{3}v_1+\sum_{j,k\geq 0} \uplambda_{jk}a^{2-k}v_2^{j+k-1}).
\end{equation}
It is easily checked that the inverse map is given by
\begin{equation}\label{eq!glueinv}
(b,w_2,w_1)\stackrel{b\neq 0}{\longmapsto}(b^{-1},\,b^{-1}w_2,\,\, b^{3}w_1-\sum_{j,k\geq 0} \uplambda_{jk}b^{2-j}w_2^{j+k-1}).
\end{equation}
Write  $\scrU = \{\scrU_1,\scrU_2\}$ for the above open cover, with common open $\scrU_{12}=\scrU_1\cap\scrU_2$.

The ideal $(v_1,v_2)$ glues with the ideal $(w_1,w_2)$ to give an ideal sheaf $\scrI\subset\scrO_\scrX$, the quotient of which defines $\Curve\subset\scrX$.  

\begin{remark}\label{rem:allzerodegen}
If \emph{all} $\uplambda_{j,k}$ are zero, then $\scrX$ is the total space of the vector bundle $\scrO_{\mathbb{P}^1}(-3)\oplus\scrO_{\mathbb{P}^1}(1)$.  Furthermore, in this case the zero section $\Curve=\mathbb{P}^1$ is very well known to have noncommutative deformation algebra $\mathbb{K}\llangle x,y\rrangle$, and commutative deformation algebra $\mathbb{K}\llsq x,y\rrsq$.  In \S\ref{sec:DefThCor} we will interpret this as being given by the zero potential.
\end{remark}

\begin{lemma}\label{lem:qucompactSep}
$\scrX$ is quasi-compact and separated.
\end{lemma}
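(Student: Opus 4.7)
The plan is to verify quasi-compactness from the open cover, then verify separatedness via the standard affine-cover criterion.

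\medskip
For quasi-compactness, the scheme $\scrX$ is by construction covered by the two affine opens $\scrU_1 = \mathbb{A}^3_{a,v_2,v_1}$ and $\scrU_2 = \mathbb{A}^3_{b,w_2,w_1}$, each of which is quasi-compact. A finite union of quasi-compact subsets is quasi-compact, so $\scrX$ is quasi-compact.

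\medskip
For separatedness, the strategy is to apply the standard criterion: a scheme with an affine open cover $\{\scrU_i\}$ is separated if and only if, for every pair $i,j$, the intersection $\scrU_i \cap \scrU_j$ is affine and the canonical ring map $\scrO(\scrU_i)\otimes \scrO(\scrU_j) \to \scrO(\scrU_i \cap \scrU_j)$ is surjective. First I would observe that $\scrU_{12}$ is the principal open of $\scrU_1$ where $a\neq 0$, hence $\scrU_{12}\cong\Spec\mathbb{K}[a^{\pm 1},v_2,v_1]$, which is affine. (Equivalently, from the $\scrU_2$ side it is the principal open where $b\neq 0$.) Then I would compute the image of the multiplication map
\[
\mathbb{K}[a,v_2,v_1]\otimes_{\mathbb{K}}\mathbb{K}[b,w_2,w_1]\longrightarrow \mathbb{K}[a^{\pm 1},v_2,v_1]
\]
induced by \eqref{eq!glue}. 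On the left-hand factor the image contains $a,v_2,v_1$; on the right-hand factor the element $b$ maps to $a^{-1}$, which adjoins the inverse of $a$. Hence the image already contains $\mathbb{K}[a^{\pm 1},v_2,v_1]$, and the surjectivity criterion holds. (The additional generators $w_2,w_1$ only contribute further elements of the target and do not obstruct surjectivity.)

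\medskip
There is no real obstacle here: the gluing is between two affine spaces along a principal affine open on each side, and the presence of $b=a^{-1}$ in the glue is exactly what guarantees the surjectivity needed. The same argument applies symmetrically by reversing the roles of $\scrU_1$ and $\scrU_2$ using the inverse glue \eqref{eq!glueinv}. Combined with quasi-compactness, this establishes both assertions.
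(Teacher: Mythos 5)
Your proposal is correct and follows essentially the same route as the paper: quasi-compactness from the finite affine cover, and separatedness via the affine-diagonal criterion, reduced to surjectivity of $\mathbb{K}[a,v_2,v_1]\otimes_{\mathbb{K}}\mathbb{K}[b,w_2,w_1]\to\scrO(\scrU_{12})$. The only cosmetic differences are that you write the target in the $\scrU_1$ coordinates and verify surjectivity by noting the image is a subring containing $a^{\pm 1},v_2,v_1$, whereas the paper works in the $\scrU_2$ coordinates and hits the monomial basis directly.
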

\begin{proof}
Since $\scrX$ has the finite open cover $\scrU_1\cup\scrU_2$, where each $\scrU_i$ is Spec of a polynomial ring, it follows that $\scrX$ is a noetherian scheme \cite[p83]{Hartshorne}, and thus in particular is quasi-compact \cite[3.1.1, Ex~I.1.7(b)]{Hartshorne}.

Now $\scrX$ is obtained from the separated $\mathbb{K}$-schemes $\scrU_1=\mathbb{A}^3$ and $\scrU_2=\mathbb{A}^3$, glued together via the open subsets $\upphi\colon\{a\neq 0\}\to\{b\neq 0\}$ in \eqref{eq!glue} above.   Now $\scrX\times_\mathbb{K}\scrX$ is covered by $\{\scrU_1\times\scrU_1,\scrU_1\times\scrU_2,\scrU_2\times\scrU_1,\scrU_2\times\scrU_2\}$ \cite[\href{https://stacks.math.columbia.edu/tag/01JS}{Tag 01JS}]{stacks}, and so by the standard  \cite[\href{https://stacks.math.columbia.edu/tag/01KJ}{Tag 01KJ}]{stacks} the scheme $\scrX$ is separated if and only if the `diagonal' map 
\[
\scrU_{12}\to \scrU_1\times\scrU_2
\]
is a closed immersion. Since $\scrU_1$ and $\scrU_2$ are affine, this condition is equivalent to the surjectivity of the map
\[
 \mathbb{K}[a,v_2,v_1]\otimes_\mathbb{K}\mathbb{K}[b,w_2,w_1] \to \mathbb{K}[b^{\pm 1},w_2,w_1]
 \]
 sending $a\otimes b\mapsto \upphi(a)b$.  Since clearly $a^t\otimes w_1^{i_1}w_2^{i_2}\mapsto b^{-t}w_1^{i_1}w_2^{i_2}$ and $1\otimes b^tw_1^{i_1}w_2^{i_2}\mapsto b^tw_1^{i_1}w_2^{i_2}$, the map clearly hits the monomial basis of  $\mathbb{K}[b^{\pm 1},w_2,w_1]$, and thus is surjective.
 \end{proof}

\begin{lemma}[Ferrari {\cite{Ferrari}}]\label{lem:normalbundle}
With the notation as above, 
\[
N_{\Curve|\scrX}\cong
\scrO(-3)\oplus \scrO(1) \iff \uplambda_{20}=\uplambda_{11}=\uplambda_{02}=0.
\]
In the remaining case where some $\uplambda_{20}, \uplambda_{11},\uplambda_{02}$ is nonzero, set $\Updelta=\uplambda_{11}^2-\uplambda_{20}\uplambda_{02}$. 
Then
\[
N_{\Curve|\scrX}\cong
\left\{
\begin{array}{lll}
\scrO(-2)\oplus \scrO &\iff \Updelta= 0\\
\scrO(-1)\oplus \scrO(-1) &\iff \Updelta\neq 0.
\end{array}
\right.
\]
\end{lemma}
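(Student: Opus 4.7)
The normal bundle $N \colonequals N_{\Curve|\scrX}$ is determined by the linearization of the gluing \eqref{eq!glue}, i.e.\ by its reduction modulo $\scrI^2$ where $\scrI$ is the ideal sheaf of $\Curve$. My first step is to observe that in the series $w_1 = a^3 v_1 + \sum_{j,k} \uplambda_{jk}a^{2-k}v_2^{j+k-1}$, only the terms with $j+k-1 = 1$ survive modulo $\scrI^2$, so together with $w_2 = a^{-1}v_2$ the transition matrix of the conormal bundle $\scrI/\scrI^2$ on the overlap $\scrU_{12}\cap \Curve$ is
\[
M = \begin{pmatrix} a^3 & \mu \\ 0 & a^{-1}\end{pmatrix}, \qquad \mu \colonequals \uplambda_{20}a^2 + \uplambda_{11}a + \uplambda_{02}.
\]
The upper-triangular form of $M$ then yields, upon dualisation, the short exact sequence
\[
0 \to \scrO(-3) \to N \to \scrO(1) \to 0,
\]
where the subbundle is generated by $v_1^\vee, w_1^\vee$ with transition $a^{-3}$, and the quotient by the images of $v_2^\vee, w_2^\vee$ with transition $a$.

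Next I would identify the extension class in $\Ext^1(\scrO(1),\scrO(-3)) = H^1(\mathbb{P}^1,\scrO(-4)) \cong \mathbb{K}^3$ by the standard \v{C}ech recipe, comparing the two obvious local splittings and converting the difference into the appropriate Hom-bundle frame. The result is that the class is represented, up to sign, by
\[
a^{-3}\mu \,=\, \uplambda_{20}a^{-1} + \uplambda_{11}a^{-2} + \uplambda_{02}a^{-3},
\]
which is a full basis of $H^1(\mathbb{P}^1,\scrO(-4))$. Hence the extension splits, giving $N \cong \scrO(-3)\oplus\scrO(1)$, if and only if $\uplambda_{20} = \uplambda_{11} = \uplambda_{02} = 0$. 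This establishes the first equivalence.

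In the non-split case, $N$ still decomposes on $\mathbb{P}^1$ as $\scrO(a)\oplus\scrO(b)$ with $a+b = c_1(N) = -2$, so the three candidates $(-3,1),(-2,0),(-1,-1)$ are distinguished by $h^0 \in \{2,1,0\}$. To pin down $h^0(N)$ I would feed the above extension into the long exact sequence
\[
0 \to H^0(N) \to H^0(\scrO(1)) \xrightarrow{\partial} H^1(\scrO(-3)) \to H^1(N) \to 0,
\]
where both middle groups are $\mathbb{K}^2$, and compute $\partial$ as cup product with the extension class. In the bases $\{1,a\}$ of $H^0(\scrO(1))$ and $\{a^{-1},a^{-2}\}$ of $H^1(\scrO(-3))$, a direct expansion yields
\[
\partial \;=\; -\begin{pmatrix} \uplambda_{20} & \uplambda_{11} \\ \uplambda_{11} & \uplambda_{02}\end{pmatrix},
\]
whose determinant equals $-\Updelta$. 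Since $h^0(N) = 2 - \rank\partial$, we conclude: $\Updelta \neq 0$ forces $h^0(N) = 0$ and $N\cong\scrO(-1)^{\oplus 2}$; while $\Updelta = 0$ with $(\uplambda_{20},\uplambda_{11},\uplambda_{02})\neq 0$ gives $\rank\partial = 1$, $h^0(N) = 1$, and $N\cong\scrO(-2)\oplus\scrO$.

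The main technical obstacle is the bookkeeping with frame and sign conventions on $\mathbb{P}^1$ required to pass between the cocycle $M$, its associated extension class cocycle in $H^1(\scrO(-4))$, and the connecting homomorphism $\partial$; modulo these conventions, the underlying computation is just linear algebra of $2\times 2$ matrices and the combinatorial check that $j+k=2$ is the only range contributing modulo $\scrI^2$.
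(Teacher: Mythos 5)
Your argument is correct, but it takes a genuinely different route from the paper. The paper also begins by reducing the glue modulo $\scrI^2$ to isolate the $j+k=2$ terms, and handles the split case by inspection, but for the case where some $\uplambda_{20},\uplambda_{11},\uplambda_{02}$ is nonzero it simply cites Ferrari's Appendix B, which states that the normal bundle is $(r-1,-r-1)$ with $r$ the corank of the quadratic form $\begin{psmallmatrix}\uplambda_{20}&\uplambda_{11}\\ \uplambda_{11}&\uplambda_{02}\end{psmallmatrix}$. You instead reprove that statement: you exhibit $N_{\Curve|\scrX}$ as an extension of $\scrO(1)$ by $\scrO(-3)$, identify the extension class $a^{-3}(\uplambda_{20}a^2+\uplambda_{11}a+\uplambda_{02})\in H^1(\mathbb{P}^1,\scrO(-4))$, and read off the splitting type from $h^0(N)$ via the connecting map, whose matrix in your bases is (up to sign) exactly the symmetric matrix above, with determinant $\pm\Updelta$. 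I checked the frame conventions and the cocycle/cup-product computations: $w_1^\vee=a^{-3}v_1^\vee$, the quotient transition $a$, the class $a^{-3}\mu$, and the matrix of $\partial$ are all consistent (and consistent with the degenerate case of \ref{rem:allzerodegen}), and the corank dichotomy reproduces Ferrari's $(r-1,-r-1)$. What your approach buys is a self-contained proof, at the cost of the convention bookkeeping you acknowledge. Two small points worth making explicit if you write it up: the restriction to the candidates $(-3,1),(-2,0),(-1,-1)$ is most cleanly justified by the bound $h^0(N)\le h^0(\scrO(-3))+h^0(\scrO(1))=2$ coming from the same long exact sequence (this rules out $(-4,2)$ and worse); and for the first equivalence, either note that an extension of $\scrO(1)$ by $\scrO(-3)$ has middle term $\scrO(-3)\oplus\scrO(1)$ only when it splits, or simply observe that your $h^0$ computation already separates all three cases, so the splitting discussion in your first step is subsumed by the second.
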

\begin{proof}
Working mod $\scrI^2=(v_1,v_2)^2$, the glue \eqref{eq!glue!intro} becomes
\begin{equation}
(a,v_2,v_1)\stackrel{a\neq 0}{\longleftrightarrow}(a^{-1}, \,a^{-1}v_2, \,\,a^{3}v_1+\sum_{j+k\leq 2} \uplambda_{jk}a^{2-k}v_2^{j+k-1}).\label{eqn:gluemodI2}
\end{equation}
Since $\uplambda_{00}=\uplambda_{10}=\uplambda_{01}=0$ by convention, by inspection of the known gluing of the total space $\scrO_{\mathbb{P}^1}(-3)\oplus\scrO_{\mathbb{P}^1}(1)$, certainly the above curve has normal bundle $\scrO(-3)\oplus\scrO(1)$ if the displayed sum in the right hand side of \eqref{eqn:gluemodI2} is zero, equivalently $\uplambda_{20}=\uplambda_{11}=\uplambda_{02}=0$.

If one of $\uplambda_{20}, \uplambda_{11}, \uplambda_{02}$ is nonzero, then mod $\scrI^2$ the gluing is 
\begin{equation}
(a,v_2,v_1)\stackrel{a\neq 0}{\longleftrightarrow}(a^{-1}, \,a^{-1}v_2, \,\,a^{3}v_1+(\uplambda_{20}a^{2}+\uplambda_{11}a+\uplambda_{02})v_2).\label{eqn:gluemodI2B}
\end{equation}
It is then a result of Ferrari \cite[Appendix B]{Ferrari} that the normal bundle is $(r-1,-r-1)$, where $r$ is the corank of the quadratic form $\begin{psmallmatrix}\uplambda_{20}&\uplambda_{11}\\\uplambda_{11}&\uplambda_{02}\end{psmallmatrix}$, and so the result follows.
\end{proof}

\subsection{The A and B Polynomials}\label{subsec:AandB}

From here on, we will consider the following setup.
\begin{setup}\label{setup:key}
Consider the glue \eqref{eq!glue}, which by definition is given by specifying finitely many nonzero $\uplambda_{jk}$.  We will assume that:
\begin{enumerate}
\item\label{setup:key1}  $\uplambda_{00}=\uplambda_{10}=\uplambda_{01}=0$, to ensure the existence of a closed curve.
\item\label{setup:key2}  $\uplambda_{20}=\uplambda_{11}=\uplambda_{02}=0$, by \ref{lem:normalbundle} to ensure that the normal bundle is $\scrO(-3)\oplus \scrO(1)$.
\item\label{setup:key3} That not all $\uplambda_{jk}$ are zero, to exclude the easy degenerate case described in \ref{rem:allzerodegen}.
\end{enumerate}
\end{setup}
Under \ref{setup:key}, the following three constants $\ordP, \kx, \ky\in\N$ will naturally appear throughout the analysis, as will the following polynomials $A$ and $B$.

\begin{notation}\label{not: key abc notation}
Set $\ordP=\mathrm{min}\{ j+k \mid \uplambda_{j,k}\neq 0\}$.   By \ref{setup:key}\eqref{setup:key3} $\ordP$ is defined, and by \ref{setup:key}\eqref{setup:key1}\eqref{setup:key2}, $\ordP\geq 3$.
Furthermore, we may write
\begin{equation}
\begin{aligned}
\sum \uplambda_{jk}a^{2-k}v_2^{j+k-1} &= a^{2-\kx}v_2^{\ordP-1}A\\
\sum \uplambda_{jk}b^{2-j}w_2^{j+k-1} &= b^{2-\ky}w_2^{\ordP-1}B\label{eqn:first eq}
\end{aligned}
\end{equation}
where $A=A(a,v_2)\in\scrO_{\scrU_1}$ and $B=B(b,w_2)\in\scrO_{\scrU_2}$ are \emph{polynomials},
and neither $a$ nor $v_2$ divides $A$, and neither $b$ nor $w_2$ divides $B$.
In particular, $2-\kx\in\Z$ is the most negative power of $a$ appearing in \eqref{eq!glue}
and $2-\ky$ is the most negative power of $b$ in \eqref{eq!glueinv}. 
Again using \ref{setup:key}\eqref{setup:key1}\eqref{setup:key2},  both $\kx\geq 0$ and $\ky\geq 0$.
\end{notation}

\begin{lemma}\label{eq!BD}
Restricting $A\in\scrO_{\scrU_1}$ and $B\in\scrO_{\scrU_2}$ to the common open subset $\scrO_{\scrU_{12}}$,
then $B = b^{\kx+\ky-\ordP}A$, equivalently $A = a^{\kx+\ky-\ordP}B$.
\end{lemma}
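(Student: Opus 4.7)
The plan is to verify the identity by directly substituting the coordinate change $a=b^{-1}$, $v_2=b^{-1}w_2$ (valid on $\scrU_{12}$) into the defining equation \eqref{eqn:first eq} for $A$, and matching against the corresponding equation for $B$. The monomial $a^{2-k}v_2^{j+k-1}$ becomes $b^{-(2-k)}(b^{-1}w_2)^{j+k-1}=b^{-j-1}w_2^{j+k-1}$, so after multiplying by $b^3$ we obtain the identity
\[
b^{3}\sum\uplambda_{jk}a^{2-k}v_2^{j+k-1}=\sum\uplambda_{jk}b^{2-j}w_2^{j+k-1}
\]
on $\scrU_{12}$. This is of course no coincidence: it is precisely the consistency requirement on the third coordinate between the glue \eqref{eq!glue} and its inverse \eqref{eq!glueinv} (the $b^3$ comes from the $a^3v_1$ term).

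Next I would use \eqref{eqn:first eq} on both sides of this identity. The right-hand side equals $b^{2-\ky}w_2^{\ordP-1}B$ directly. For the left-hand side, I substitute \eqref{eqn:first eq} to get $b^{3}\cdot a^{2-\kx}v_2^{\ordP-1}A$, and then apply the coordinate change on the monomial prefactor:
\[
b^{3}\cdot a^{2-\kx}v_2^{\ordP-1}=b^{3}\cdot b^{\kx-2}\cdot(b^{-1}w_2)^{\ordP-1}=b^{\kx-\ordP+2}\,w_2^{\ordP-1}.
\]
Equating the two expressions gives $b^{\kx-\ordP+2}w_2^{\ordP-1}A=b^{2-\ky}w_2^{\ordP-1}B$, and cancelling the common invertible factor $b^{2}w_2^{\ordP-1}$ yields $B=b^{\kx+\ky-\ordP}A$. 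The equivalent form $A=a^{\kx+\ky-\ordP}B$ is immediate from $a=b^{-1}$.

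There is no substantive obstacle; the only thing to be careful about is bookkeeping of exponents, and the fact that $\kx+\ky-\ordP$ may be negative is harmless because $b$ is invertible on $\scrU_{12}$. The result can essentially be read as saying that the single quantity $a^{2-\kx}v_2^{\ordP-1}A=b^{2-\ky}w_2^{\ordP-1}B$, modulo the trivial $b^3$ factor coming from the $\scrO(-3)$ summand, glues to a well-defined global section on $\scrU_{12}$.
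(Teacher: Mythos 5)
Your argument is correct and is essentially the paper's proof: the paper rewrites the third components of the glue and its inverse as $b^{2-\kx}w_1=a^{\kx+1}v_1+v_2^{\ordP-1}A$ and $a^{2-\ky}v_1=b^{\ky+1}w_1-w_2^{\ordP-1}B$ and rearranges, which is exactly the exponent bookkeeping you carry out directly on the defining sums. One wording fix: $w_2$ is not invertible on $\scrU_{12}$, so the cancellation of $w_2^{\ordP-1}$ should be justified by noting that $\scrO_{\scrU_{12}}=\mathbb{K}[b^{\pm1},w_2,w_1]$ is a domain (only the $b^{2}$ factor is a unit).
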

\begin{proof}
In the notation of \ref{not: key abc notation}, the final components of the glue equations \eqref{eq!glue} and \eqref{eq!glueinv} can be written
\begin{equation}\label{eq!complex}
\begin{aligned}
b^{2-\kx}w_1 &= a^{\kx+1}v_1 + v_2^{\ordP-1}A\\
a^{2-\ky}v_1 &= b^{\ky+1}w_1 - w_2^{\ordP-1}B.
\end{aligned}
\end{equation}
Rearranging for $A$ and $B$ gives the result.
\end{proof}

The following more refined notation will be needed for inductive arguments later.
\begin{notation}\label{def:AandB}
Given \eqref{eqn:first eq}, for all $i\geq 3$ define $A_i\in\mathbb{K}[a]$ and $B_i\in\mathbb{K}[b]$ by 
\begin{align*}
A_i&=\uplambda_{i,0}a^{\kx}+\uplambda_{i-1,1}a^{\kx-1}+\hdots+\uplambda_{0,i}a^{\kx-i}  \\
B_i&=\uplambda_{0,i}b^{\ky}+\uplambda_{1,i-1}b^{\ky-1}+\hdots+\uplambda_{i,0}b^{\ky-i} .
\end{align*}
\end{notation}
By definition of $\ordP$ (in \ref{not: key abc notation}), $A_3=\hdots=A_{\ordP-1}=0$ and $B_3=\hdots=B_{\ordP-1}=0$.  
The following is the graded piece analogue of \ref{eq!BD}.
\begin{lemma}\label{lem:AiBi1}
For all $i\geq 3$, we have $A_i=a^{\kx+\ky-i}B_i$ and so $v_2^{i-\ordP}A_i=a^{\kx+\ky-\ordP} w_2^{i-\ordP}B_i$.  
\end{lemma}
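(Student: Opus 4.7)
The plan is to deduce both equalities by matching coefficients, either directly from the definitions or by passing to the graded pieces of the identity $B = b^{\kx+\ky-\ordP}A$ from Lemma~\ref{eq!BD}.

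First, I would observe that the polynomials $A$ and $B$ admit a natural grading by powers of $v_2$ and $w_2$ respectively. Indeed, from the definition in \eqref{eqn:first eq},
\[
A = \sum_{j,k\geq 0} \uplambda_{jk}\, a^{\kx-k} v_2^{j+k-\ordP} = \sum_{i\geq \ordP} A_i\, v_2^{i-\ordP},
\qquad
B = \sum_{i\geq \ordP} B_i\, w_2^{i-\ordP},
\]
where $A_i$ and $B_i$ are precisely the polynomials in Notation~\ref{def:AandB}. The indexing starts at $i=\ordP$ by the definition of $\ordP$ in \ref{not: key abc notation}, and $A_i, B_i$ are zero for $3\le i<\ordP$.

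Next, on the common open $\scrU_{12}$, the glue \eqref{eq!glue} gives $b=a^{-1}$ and $w_2=a^{-1}v_2$, so $v_2^{i-\ordP}=a^{i-\ordP}w_2^{i-\ordP}$. Substituting the graded expansion of $A$ into the identity $B=b^{\kx+\ky-\ordP}A=a^{\ordP-\kx-\ky}A$ of \ref{eq!BD} yields
\[
\sum_{i\geq\ordP} B_i\, w_2^{i-\ordP}
=a^{\ordP-\kx-\ky}\sum_{i\geq\ordP} A_i\, a^{i-\ordP} w_2^{i-\ordP}
=\sum_{i\geq\ordP} \bigl(a^{i-\kx-\ky}A_i\bigr)\, w_2^{i-\ordP}.
\]
Since the $w_2^{i-\ordP}$ are linearly independent over the Laurent polynomial ring $\mathbb{K}[a^{\pm}]$, comparing coefficients gives $B_i = a^{i-\kx-\ky}A_i$, equivalently $A_i = a^{\kx+\ky-i}B_i$, for every $i\geq 3$. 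The trivial cases $3\le i<\ordP$ hold since both sides vanish.

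The second formula is then immediate: multiplying $A_i=a^{\kx+\ky-i}B_i$ by $v_2^{i-\ordP}=a^{i-\ordP}w_2^{i-\ordP}$ gives $v_2^{i-\ordP}A_i = a^{\kx+\ky-\ordP}w_2^{i-\ordP}B_i$, as required. No step here is a genuine obstacle; the only point of care is the book-keeping of powers of $a$ and the distinction between polynomial identities on $\scrU_1$ or $\scrU_2$ and Laurent polynomial identities on $\scrU_{12}$. As an alternative (essentially equivalent) approach, one can bypass \ref{eq!BD} and simply compute $a^{\kx+\ky-i}B_i=\sum_{j+k=i}\uplambda_{jk}a^{\kx+\ky-i+j-\ky}=\sum_{j+k=i}\uplambda_{jk}a^{\kx-k}=A_i$ using $j+k=i$.
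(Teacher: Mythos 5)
Your proof is correct. The paper disposes of this lemma in one line: the identity $A_i=a^{\kx+\ky-i}B_i$ is checked directly from Notation~\ref{def:AandB} using $b=a^{-1}$ on $\scrU_{12}$ (this is exactly the computation you relegate to your final ``alternative'': $a^{\kx+\ky-i}B_i=\sum_{j+k=i}\uplambda_{jk}a^{\kx-k}=A_i$), and the second equality then follows from $w_2=a^{-1}v_2$, just as you conclude. Your main route is genuinely different in flavour: rather than inspecting the definitions, you expand $A=\sum_{i\ge\ordP}A_iv_2^{i-\ordP}$ and $B=\sum_{i\ge\ordP}B_iw_2^{i-\ordP}$, substitute into $B=b^{\kx+\ky-\ordP}A$ from \ref{eq!BD}, and compare coefficients of $w_2^{i-\ordP}$ over $\mathbb{K}[a^{\pm1}]$. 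This is valid (on $\scrU_{12}$ the relevant elements live in $\mathbb{K}[a^{\pm1},w_2]$, where $w_2$ is an indeterminate over the Laurent ring, so the coefficient comparison is legitimate), and it has the merit of making literal the paper's remark that \ref{lem:AiBi1} is the ``graded piece analogue'' of \ref{eq!BD}; the cost is that it routes a statement which is immediate from the definitions through the earlier lemma plus a linear-independence argument. Either way the content is the same, and your bookkeeping of the exponents, including the vanishing cases $3\le i<\ordP$, is accurate.
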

\begin{proof}
The statement $A_i=a^{\kx+\ky-i}B_i$ is immediate by inspection, since $a=b^{-1}$ on $\scrU_{12}$.  The second statement follows, since by the glue \eqref{eq!glue} $w_2=a^{-1}v_2$.
\end{proof}
Inspecting the graded pieces of \eqref{eqn:first eq}, note that
\[
\sum_{j+k=i} \uplambda_{jk}a^{2-k}v_2^{j+k-1} = a^{2-\kx}v_2^{i-1}A_i
\quad\text{and}\quad
\sum_{j+k=i}  \uplambda_{jk}b^{2-j}w_2^{j+k-1} = b^{2-\ky}w_2^{i-1}B_i.
\]
Consequently, the $A$ and $B$ defined in \ref{not: key abc notation} can be written
\begin{align}
A&=A_{\ordP}+v_2A_{\ordP+1}+v_2^2A_{\ordP+2}+\hdots\nonumber \\
&=\underbrace{v_2^{3-\ordP}A_3+v_2^{4-\ordP}A_4+\hdots+v_2^{-1}A_{\ordP-1}}_{=0}+\underbrace{A_{\ordP}}_{\ne0}
+v_2A_{\ordP+1}+v_2^2A_{\ordP+2}+\hdots \label{eqn:Aassum}
\end{align}
and similarly $B=w_2^{3-\ordP}B_3+\hdots+w_2^{-1}B_{\ordP-1}+B_{\ordP}+w_2B_{\ordP+1}+w_2^2B_{\ordP+2}+\hdots$ with $B_t\ne0$ where all terms $w_2^{-i}B_{t-i}$ with negative $w_2$ exponents are zero.

\subsection{Sheaves on the neighbourhood}\label{sec:sheavesnbd}
For $n\in\mathbb{Z}$, consider the locally free sheaves $\scrO(n)$ defined on $\scrX$ by taking the rank one free module on $\scrU_2$ and the rank one free module on $\scrU_1$, and gluing them on the common intersection $\scrU_{12}$ via the isomorphism
\[
\begin{array}{rcl}
\mathbb{K}[b^{\pm 1},w_2,w_1]&\to&\mathbb{K}[a^{\pm 1},v_2,v_1]\\
f(b,w_2,w_1)&\mapsto & a^{n}\cdot f(a^{-1},a^{-1}v_2,a^{3}v_1+\sum_{j,k\geq 0} \uplambda_{jk}a^{2-k}v_2^{j+k-1}).
\end{array}
\]
A map between sheaves $\phi\colon \scrO(m) \rightarrow \scrO(n)$ is determined
by two polynomial maps  in a diagram
\begin{equation}\label{eqn:glueconvention}
\begin{array}{c}
\begin{tikzpicture}[scale=0.75]
\node (A) at (0,2) {On $\scrU_1$:};
\node (B) at (0,0) {On $\scrU_2$:};
\node (A1) at (3,2) {$\scrO_{\scrU_1}(m)$};
\node (A2) at (8,2) {$\scrO_{\scrU_1}(n)$};
\node (B1) at (3,0) {$\scrO_{\scrU_2}(m)$};
\node (B2) at (8,0) {$\scrO_{\scrU_2}(n)$};
\draw[->] (B1)--node[left]{$\scriptstyle a^m$}(A1);
\draw[->] (B2)--node[left]{$\scriptstyle  a^n$}(A2);
\draw[->] (A1)--node[above]{$\scriptstyle \phi_1(a,v_2,v_1)$}(A2);
\draw[->] (B1)--node[above]{$\scriptstyle \phi_2(b,w_2,w_1)$}(B2);
\end{tikzpicture}
\end{array}
\end{equation}
that commutes after substituting for glueing expressions \eqref{eq!glue}.
Although it is natural to define $\phi_1$ and $\phi_2$ in the coordinates on their patch,
we may use expressions in any of the variables that remain regular on the given patch
and commute up to the glue, e.g. in \ref{thm: locallyfreeres} below. The above picture will be abbreviated
\[
\begin{tikzpicture}
\node (A) at (0,0) {$\phi\colon \scrO(m)$};
\node (B) at (3.5,0) {$\scrO(n)$};
\draw[->] (A)--node[above]{$\scriptstyle \phi_1(a,v_2,v_1)$}node[below]{$\scriptstyle \phi_2(b,w_2,w_1)$}(B);
\end{tikzpicture}
\]
leaving the transition maps implicit, and similarly maps between direct sums of such bundles will be represented
\[
\begin{tikzpicture}
\node (A) at (0,0) {$\bigoplus\scrO(m_i)$};
\node (B) at (3.5,0) {$\bigoplus\scrO(n_j)$};
\draw[->] (A)--node[above]{$\scriptstyle \Phi_1(a,v_2,v_1)$}node[below]{$\scriptstyle \Phi_2(b,w_2,w_1)$}(B);
\end{tikzpicture}
\]
where $\Phi_i$ are represented by matrices representing the maps on the two charts,
and the transition maps are given by the matrices $\diag(a^{m_i})$ and $\diag(a^{n_j})$.

\section{Constructing the DG-model}

For any choice of scalars $\uplambda_{jk}$ satisfying \ref{setup:key}, consider the scheme $\scrX$ defined by the gluing rule in \eqref{eq!glue}, which contains $\Curve\cong\mathbb{P}^1$.  This section builds a particular DG-algebra that controls the deformation theory of $\scrO_\Curve$.  

In \S\ref{sec: construct locally free res} below, which is the key new construction, we exhibit a uniform locally free resolution $\scrE$ of $\scrO_\Curve$.  Using this, together with standard results involving homological DG-algebras (in \S\ref{subsec:HomDGA}) and  \v{C}ech enhancements (in \S\ref{sec:mixedDGA}), we then exhibit a complex $(\scrC,\dd)$ which computes the modules $\Ext_\scrX^i(\scrO_\Curve,\scrO_\Curve)$. After some some sign adjustments to multiplication (in \S\ref{sec:DGAC}), we obtain an explicit DG-algebra $(\scrC,\star,\dd)$ that models $\RHom_\scrX(\scrO_\Curve,\scrO_\Curve)$.

\subsection{Locally Free Resolution of \texorpdfstring{$\scrO_\Curve$}{OC}}\label{sec: construct locally free res}

This subsection constructs a locally free resolution of the structure sheaf $\scrO_{\Curve}$.
Recall the polynomials $A\in\C[a,v_2]$ and $B\in\C[b,w_2]$ from \ref{not: key abc notation}
and the abbreviated notation for maps of sheaves on $\scrX=\scrU_1\cup\scrU_2$
from \S\ref{sec:sheavesnbd}.

\begin{thm}\label{thm: locallyfreeres}
Under Setup~\textnormal{\ref{setup:key}}, the following is a locally free resolution of $\scrO_{\Curve}$.
\[
\begin{tikzpicture}
\node (A) at (-1.85,0) {$0$};
\node (B) at (-0.5,0) {$\scrO({\scriptstyle -\kx-\ky})$};
\node (C1) at (2.75,0.75) {$\scrO({\scriptstyle 1-\kx-\ky})$};
\node () at (2.75,0.4) {$\oplus$};
\node (C2) at (2.75,0) {$\scrO({\scriptstyle1-\ky})$};
\node () at (2.75,-0.35) {$\oplus$};
\node (C3) at (2.75,-0.75) {$\scrO({\scriptstyle1-\kx})$};
\node (D1) at (8,0.8) {$\scrO({\scriptstyle2-\ky})$};
\node () at (8,0.4) {$\oplus$};
\node (D2) at (8,0) {$\scrO({\scriptstyle-1})$};
\node () at (8,-0.4) {$\oplus$};
\node (D3) at (8,-0.8) {$\scrO({\scriptstyle2-\kx})$};
\node (E) at (11.25,0) {$\scrO$};
\draw[->] (A)--(B);
\draw[->] (B)--
node[above]{$\begin{psmallmatrix}v_2 \\ -a^{\kx+1}\\ -1\end{psmallmatrix}$}
node[below]{$\begin{psmallmatrix}w_2 \\ -1\\ -b^{\ky+1}\end{psmallmatrix}$}
(C2);
\draw[->] (C2)--
node[above]{$\begin{psmallmatrix}
a^{\kx+1}&v_2&0\\
v_2^{\ordP -2}\!A&-v_1&b^{2-\kx}w_1\\
-1&0&-v_2\end{psmallmatrix}$}
node[below]{$\begin{psmallmatrix}
1&w_2&0\\
w_2^{\ordP-2}\!B&-a^{2-\ky}v_1&w_1\\
-b^{\ky+1}&0&-w_2\end{psmallmatrix}$}
(D2);
\draw[->] (D2)--
node[above]{$\begin{psmallmatrix}v_1 & v_2& b^{2-\kx}w_1\end{psmallmatrix}$}
node[below]{$\begin{psmallmatrix}a^{2-\ky}v_1 &w_2& w_1\end{psmallmatrix}$}
(E);
\end{tikzpicture}
\]
where in the top $b^{2-\kx}w_1=a^{\kx+1}v_1 + v_2^{\ordP-1}A$, and in the bottom $a^{2-\ky}v_1= b^{\ky+1}w_1 - w_2^{\ordP-1}B$.
\end{thm}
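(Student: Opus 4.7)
The plan is to verify three things in turn: (i) the displayed matrices define well-defined sheaf morphisms between the stated twisted line bundles on $\scrX$; (ii) consecutive differentials compose to zero; and (iii) the resulting complex is exact, with cokernel $\scrO_\Curve$. Since locally free sheaves on $\scrX$ are determined by their restrictions to the cover $\{\scrU_1,\scrU_2\}$, both (ii) and (iii) reduce to affine computations on each patch.

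For (i), each entry corresponds to a component map $\scrO(m)\to\scrO(n)$, and by the convention \eqref{eqn:glueconvention} one must check that $\phi_1 = a^{n-m}\phi_2$ on $\scrU_{12}$ after substituting the glue \eqref{eq!glue}. Entries that are pure powers of $a$ or $b$ are immediate; entries involving $A$ and $B$ reduce to the identity $A = a^{\kx+\ky-\ordP}B$ of Lemma~\ref{eq!BD} together with $w_2 = a^{-1}v_2$. Finally, the mixed entries $b^{2-\kx}w_1$ (appearing on $\scrU_1$) and $a^{2-\ky}v_1$ (appearing on $\scrU_2$) are regular on the indicated patch precisely by \eqref{eq!complex}, which simultaneously guarantees the required gluing. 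This entry-by-entry verification is the main obstacle of the proof: while each individual check is routine, the full bookkeeping across all matrix entries is lengthy.

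For (ii), direct matrix multiplication on $\scrU_1$ suffices. Most rows of $d_2\circ d_3$ and columns of $d_1\circ d_2$ vanish on sight; the only interesting cancellation is the second row of $d_2\circ d_3$, which equals $v_2^{\ordP-1}A + a^{\kx+1}v_1 - b^{2-\kx}w_1$ and vanishes by \eqref{eq!complex}. For (iii), on $\scrU_1\cong\mathbb{A}^3_{a,v_2,v_1}$ the rightmost differential has image $(v_1,v_2,a^{\kx+1}v_1+v_2^{\ordP-1}A)=(v_1,v_2)$, cutting out $\Curve\cap\scrU_1$, so its cokernel is $\scrO_\Curve|_{\scrU_1}$. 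For exactness at the middle two spots, the key observation is that the leftmost differential and the first column of the middle differential each contain a unit entry $-1$. Using these units to perform Gaussian elimination (a change of basis on the source of $d_2$, followed by a change of basis on its target) splits off two contractible rank-$1$ summands from the complex. The surviving quotient is the length-$2$ Koszul complex $0 \to R \xrightarrow{(v_2,-v_1)^T} R^2 \xrightarrow{(v_1,v_2)} R$, which is exact since $v_1,v_2$ is a regular sequence in $R=\C[a,v_2,v_1]$. A symmetric argument on $\scrU_2$, interchanging $\kx\leftrightarrow\ky$, concludes the proof.
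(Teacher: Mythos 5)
Your proposal is correct. For the gluing check and the identification of the cokernel you follow essentially the same path as the paper: entry-by-entry verification of the chart compatibilities using \ref{eq!BD} and $w_2=a^{-1}v_2$, with the mixed entries $b^{2-\kx}w_1$ and $a^{2-\ky}v_1$ handled by \eqref{eq!complex}, and the observation that the image of the rightmost map on each chart is the ideal $(v_1,v_2)$, resp.\ $(w_1,w_2)$. Where you genuinely diverge is the exactness argument: the paper chases elements of the kernels of the middle and right-hand maps directly on $\scrU_1$, using that $\scrO_{\scrU_1}$ is a domain (resp.\ a UFD) to produce explicit preimages, whereas you use the unit entries of $d_3$ and of the first column of $d_2$ to strip off two contractible rank-one summands and identify the surviving complex with the Koszul complex of the regular sequence $v_1,v_2$ (and $w_1,w_2$ on the other chart, where the units sit in different slots but the same elimination goes through). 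Both routes are complete; your reduction is more structural, exhibiting the resolution chart by chart as the Koszul resolution plus contractible pieces, which delivers acyclicity and the cokernel in one stroke, while the paper's kernel chase is more elementary and needs no change of basis. Two small bookkeeping remarks: the identity \eqref{eq!complex} is also what makes the first column of $d_1\circ d_2$ vanish, not only the second row of $d_2\circ d_3$ (harmless, since it is the same relation); and checking the complex property on $\scrU_1$ alone does suffice, but only because the composites restricted to $\scrU_2$ are matrices over the domain $\mathbb{K}[b,w_2,w_1]$ vanishing on the dense open $\scrU_{12}$ --- worth a half-sentence, though the paper itself simply declares the complex property to be clear.
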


\begin{proof}
We first check that the given maps on charts glue to maps of sheaves as indicated.
On the left, since $w_2=a^{-1}v_2$ and $b=a^{-1}$, we have
\[
\begin{psmallmatrix}a^{1-\kx-\ky}&0&0\\0&a^{1-\ky}&0\\0&0&a^{1-\kx}\end{psmallmatrix}
\begin{psmallmatrix}a^{-1}v_2\\-1\\-a^{-\ky-1}\end{psmallmatrix} =
\begin{psmallmatrix}a^{-\kx-\ky}v_2\\-a^{1-\ky}\\-a^{-\kx-\ky}\end{psmallmatrix}=
\begin{psmallmatrix}v_2\\-a^{\kx+1}\\-1\end{psmallmatrix}
\cdot a^{-\kx-\ky}
\]
so the local charts glue to a map $\scrO(-\kx-\ky)\rightarrow\scrO(1-\kx-\ky)\oplus\scrO(1-\ky)\oplus\scrO(1-\kx)$.

For the middle map, substituting also for $B = a^{\ordP-\kx-\ky}A$ by \ref{eq!BD}, we have
\begin{align*}
\begin{psmallmatrix}a^{\kx+1}&v_2&0\\v_2^{\ordP -2}\!A&-v_1&b^{2-\kx}w_1\\-1&0&-v_2\end{psmallmatrix}
\begin{psmallmatrix}a^{1-\kx-\ky}&0&0\\0&a^{1-\ky}&0\\0&0&a^{1-\kx}\end{psmallmatrix}
&=
\begin{psmallmatrix}a^{2-\ky}&a^{1-\ky}v_2&0\\a^{1-\kx-\ky}v_2^{\ordP -2}\!A&-a^{1-\ky}v_1&a^{-1}w_1\\-a^{1-\kx-\ky}&0&-a^{1-\kx}v_2\end{psmallmatrix} \\
&=\begin{psmallmatrix}a^{2-\ky}&0&0\\0&a^{-1}&0\\0&0&a^{2-\kx}\end{psmallmatrix}
\begin{psmallmatrix}
1&a^{-1}v_2&0\\
a^{2-\kx-\ky}v_2^{\ordP-2}\!A&-a^{2-\ky}v_1&w_1\\
-a^{-\ky-1}&0&-a^{-1}v_2\end{psmallmatrix}
\end{align*}
so the local charts glue to a map as indicated.
Finally, on the right,
\[
\begin{psmallmatrix}v_1&v_2&a^{\kx-2}w_1\end{psmallmatrix} 
\begin{psmallmatrix}a^{2-\ky}&0&0\\0&a^{-1}&0\\0&0&a^{2-\kx}\end{psmallmatrix} =
\begin{psmallmatrix}a^{2-\ky}v_1&a^{-1}v_2&w_1\end{psmallmatrix}
\]
so the local charts glue to a map to the trivial bundle.
The sequence of maps of sheaves is clearly a complex, so it remains to check that it is exact and has cokernel $\scrO_\Curve$.

Exactness of the complex is local, and it is enough to check exactness on the two patches separately. We do this for the first patch in coordinates $\scrO_{\scrU_1}=\C[a,v_2,v_1]$, with the
second patch being similar. The left-most nonzero map is clearly injective. If $(r_1,r_2,r_3)^T$ belongs to the kernel of $\scrO^3\rightarrow\scrO^3$, then
$r_1=-v_2r_3$ and
\[
0 = a^{\kx+1}r_1 + v_2r_2 = (-a^{\kx+1}r_3 + r_2)v_2
\]
so that $r_2 = a^{\kx+1}r_3$, as $\scrO_{\scrU_1}$ is a domain.
Thus $(r_1,r_2,r_3)^T$ lies in the image and the complex is exact at this point.

If $(r_1,r_2,r_3)^T$ belongs to the kernel of the right-hand map $\scrO^3\rightarrow\scrO$ then
by \eqref{eq!complex}
\[
v_1r_1 + v_2r_2 + (a^{\kx+1}v_1 + v_2^{\ordP-1}A)r_3 = 0.
\]
Rewriting this as $(r_1+a^{\kx+1}r_3)v_1 = (-r_2 - v_2^{\ordP-2}Ar_3)v_2$
shows that $r_1+a^{\kx+1}r_3 = Cv_2$ and $-r_2-v_2^{\ordP-2}Ar_3 = Cv_1$ for some $C\in\scrO_{\scrU_1}$, since $\scrO_{\scrU_1}$ is a UFD.
Thus $(r_1,r_2,r_3)^T$ is the image of $(-r_3,C,0)^T$, as required.

Finally, it is clear that $\scrO_\Curve$ is the cokernel, since the equation $b^{2-\kx}w_1=a^{\kx+1}v_1 + v_2^{\ordP-1}A$, together with $\ordP\ge2$, shows that $b^{2-\kx}w_1\in(v_1,v_2)=I_C$.
\end{proof}

\subsection{Homological DG-algebra}\label{subsec:HomDGA}
To immediately ease notation, write
\[
\scrE\colonequals\quad
0\to\scrE_3 \xrightarrow{\dsf_{3}} \scrE_{2} \xrightarrow{\dsf_{2}} \scrE_{1} \xrightarrow{\dsf_{1}} \scrE_0 \to 0
\]
for the complex of locally free sheaves on $\scrX$ constructed in \ref{thm: locallyfreeres}, and consider the homological DG-algebra $(\scrEnd_\scrX(\scrE),\circ, \updelta)$ defined by $\scrEnd_\scrX(\scrE)=\bigoplus_{i\in\mathbb{Z}}\scrHom^i(\scrE,\scrE) $, where
\begin{equation}
\scrHom^i(\scrE,\scrE) = \bigoplus_{n\in\Z} \scrHom(\scrE_n,\scrE_{n-i}).\label{eqn:homgE}
\end{equation}
By \eqref{eqn:homgE}, a homogeneous element $\bsf\in\scrHom^i(\scrE,\scrE)$ of $\scrEnd_\scrX(\scrE)$ decomposes as $\bsf = (\bsf_n)_{n\in\Z}$ with $\bsf_n\in\scrHom(\scrE_n,\scrE_{n-i})$, and we will refer to $\bsf_n$ as the {\em $n$th component} of the homogenous element $\bsf$.

Composition $\circ$ gives rise to a product on $\scrEnd_\scrX(\scrE)$ which preserves the grading.  Further, there is a differential  $\updelta\colon \scrHom^i(\scrE,\scrE)\to\scrHom^{i+1}(\scrE,\scrE)$ whose $n$th component is
\begin{equation}\label{eqn:updelta}
(\updelta\asf)_n \colonequals \dsf_{n-i}\circ \asf_n - (-1)^i \asf_{n-1} \circ \dsf_{n}.
\end{equation}
In the following picture, visually this is down-across $-(-1)^{i}$ across-down.
\[
\begin{tikzpicture}
\node (B1) at (-0.25,1) {$\scrE_n$};
\node (C1) at (3.25,1) {$\scrE_{n-1}$};
\draw[->] (B1)--node[above]{$\scriptstyle\dsf_n$}(C1);
\node (C2) at (-0.25,-1) {$\scrE_{n-i}$};
\node (D2) at (3.25,-1) {$\scrE_{n-i-1}$};
\draw[->] (C2)--node[below]{$\scriptstyle\dsf_{n-i}$}(D2);
\draw[->](B1)--node[left,pos=0.4]{$\scriptstyle\asf_n$}(C2);
\draw[->](C1)--node[right,pos=0.4]{$\scriptstyle\asf_{n-1}$}(D2);
\end{tikzpicture}
\]

\begin{lemma}\label{lem:updelta}
$\updelta^2(\asf)=0$ and $\updelta(\asf_{|\scrU}) = \updelta(\asf)_{|\scrU}$ 
for any $\asf\in\scrHom^i(\scrE,\scrE)$ and any open $\scrU\subset\scrX$.
\end{lemma}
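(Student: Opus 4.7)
The plan is to verify both assertions componentwise, using the formula \eqref{eqn:updelta} for $\updelta$.

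For the first claim, fix $\asf \in \scrHom^i(\scrE,\scrE)$ and compute the $n$th component of $\updelta^2(\asf) \in \scrHom^{i+2}(\scrE,\scrE)$ directly from \eqref{eqn:updelta}, treating $\updelta\asf$ as an element of $\scrHom^{i+1}(\scrE,\scrE)$ with components $(\updelta\asf)_n = \dsf_{n-i}\circ \asf_n - (-1)^i \asf_{n-1}\circ \dsf_n$. Substituting,
\[
(\updelta^2\asf)_n = \dsf_{n-i-1}\circ(\updelta\asf)_n - (-1)^{i+1}(\updelta\asf)_{n-1}\circ \dsf_n,
\]
and expanding each factor produces four terms. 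Two of them are $\dsf_{n-i-1}\circ\dsf_{n-i}\circ\asf_n$ and $(-1)^{i+1}\cdot(-1)^i\asf_{n-2}\circ\dsf_{n-1}\circ\dsf_n$, each of which vanishes because $\dsf\circ\dsf = 0$ on the complex $\scrE$. The remaining two terms are $-(-1)^i\dsf_{n-i-1}\circ\asf_{n-1}\circ\dsf_n$ and $-(-1)^{i+1}\dsf_{n-i-1}\circ\asf_{n-1}\circ\dsf_n$; since $(-1)^i + (-1)^{i+1} = 0$, these cancel. Thus $(\updelta^2\asf)_n = 0$ for every $n$. This is the only non-trivial step and it is completely routine; there is no hard obstacle.

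For the second claim, fix an open $\scrU \subset \scrX$ and $\asf \in \scrHom^i(\scrE,\scrE)$. Restriction of sheaf maps is functorial and compatible with composition, so $(\asf|_\scrU)_n = (\asf_n)|_\scrU$ and $(\dsf_n)|_\scrU$ is the $n$th differential of $\scrE|_\scrU$. Plugging into \eqref{eqn:updelta} gives
\[
\bigl(\updelta(\asf|_\scrU)\bigr)_n = (\dsf_{n-i})|_\scrU \circ (\asf_n)|_\scrU - (-1)^i (\asf_{n-1})|_\scrU \circ (\dsf_n)|_\scrU = \bigl((\updelta\asf)_n\bigr)|_\scrU,
\]
which is precisely the $n$th component of $\updelta(\asf)|_\scrU$. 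Since both sides agree componentwise, the equality holds as stated.

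In summary, both statements reduce to componentwise manipulations using $\dsf^2 = 0$ and the naturality of restriction under composition, so the lemma follows without any genuine obstacle.
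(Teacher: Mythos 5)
Your proof is correct and follows the same route as the paper, which simply declares the identity $\updelta^2=0$ standard and notes that restriction is compatible with composition of sheaf maps; your componentwise expansion (with the two $\dsf\circ\dsf$ terms vanishing and the two cross terms cancelling via $(-1)^i+(-1)^{i+1}=0$) is exactly the standard computation being invoked.
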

\begin{proof}
The first claim is standard, and the second claim is the statement that $\dsf_n$
and $\asf$ are maps of sheaves, so are defined by composition on the open $\scrU$.
\end{proof}

\subsection{\v{C}ech complex}
This subsection sets notation for the \v{C}ech complex $(\vC(\scrU,\scrF),\mathfrak{d})$ of a sheaf $\scrF$ on $\scrX$ relative to the open cover $\scrX=\scrU_1\cup\scrU_2$.  Recall that $\scrU_{12}=\scrU_1\cap\scrU_2$.

The only nonzero terms in the \v{C}ech complex are
$\vC^0 = \scrF|_{\scrU_1}\oplus\scrF|_{\scrU_2}$ and $\vC^1 = \scrF|_{\scrU_{12}}$.
For $(\asf_1,\asf_2)\in\vC^0$, the coboundary $\mathfrak{d}\colon\vC^0\rightarrow\vC^1$
is the map
\begin{equation}\label{eqn:frakdelta}
\mathfrak{d}(\asf_1,\asf_2) = {\asf_1}_{|\scrU_{12}}-{\asf_2}_{|\scrU_{12}}.
\end{equation}

\subsection{The total complex}\label{sec:mixedDGA}
Following \cite{AspinwallKatz}, consider $\scrC = \bigoplus_{n\in\mathbb{Z}} \scrC_n$ where
\[
\scrC_n \colonequals \bigoplus_{p+q=n} \vC^p(\scrU,\scrHom^q(\scrE,\scrE)),
\]
with differential $\dd =\mathfrak{d} + (-1)^p\updelta$ described below. This is the \v{C}ech complex of the sheaf of DG-algebras $\scrEnd_\scrX(\scrE)$ from \S\ref{subsec:HomDGA}, c.f.\ \cite[01FP]{stacks}.

Unpacking this, since only $\vC^0$ and $\vC^1$ are nonzero, a homogenous element $\asf\in\scrC_i$ is a triple of vectors of homomorphisms
\[
\asf = (\asf_1, \asf_2, \asf_{12})
\]
where 
\[
\asf_1 \in \bigoplus_{n\in\Z} \scrHom_{\scrU_1}(\scrE_n,\scrE_{n-i}),\,\,
\asf_2 \in \bigoplus_{n\in\Z} \scrHom_{\scrU_2}(\scrE_n,\scrE_{n-i}),\,\,
\asf_{12} \in \bigoplus_{n\in\Z} \scrHom_{\scrU_{12}}(\scrE_n,\scrE_{n-i+1}) 
\]
The differential $\dd\colon \scrC_i\rightarrow\scrC_{i+1}$ is defined by
\begin{equation}
(\asf_1,\asf_2,\asf_{12})\mapsto (\updelta \asf_1, \updelta \asf_2, (\asf_{1|\scrU_{12}} - \asf_{2|\scrU_{12}}) - \updelta \asf_{12})\label{eqn:defdd}
\end{equation}
where $\updelta$ is from \eqref{eqn:updelta} and we have applied $\dfrak$ from \eqref{eqn:frakdelta}.


As is well-known, the complex $(\scrC,\dd)$ computes the cohomology $\Ext^i_\scrX(\scrO_\Curve,\scrO_\Curve)$.
\begin{prop}\label{prop:Exti}
$\mathrm{H}^i(\scrC)\cong \Ext^i_\scrX(\scrO_\Curve,\scrO_\Curve)$ for all $i\in\mathbb{Z}$.
\end{prop}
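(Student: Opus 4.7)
The plan is to recognise $(\scrC,\dd)$ as the totalisation of the \v{C}ech double complex with coefficients in the sheaf of \textnormal{DG}-algebras $\scrEnd_\scrX(\scrE)$ for the affine cover $\scrU=\{\scrU_1,\scrU_2\}$, and then to invoke the standard \v{C}ech-to-hypercohomology machinery to identify $\mathrm{H}^i(\scrC)$ with $\Ext^i_\scrX(\scrO_\Curve,\scrO_\Curve)$. Concretely, write $K^{p,q}=\vC^p(\scrU,\scrHom^q(\scrE,\scrE))$, with horizontal differential $\mathfrak{d}$ from \eqref{eqn:frakdelta} and vertical differential $\updelta$ from \eqref{eqn:updelta}. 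The totalisation differential $\mathfrak{d}+(-1)^p\updelta$ agrees on the nose with \eqref{eqn:defdd}, so $(\scrC,\dd)=\Tot(K^{\bullet,\bullet})$.

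First I would use the fact that $\scrE\to\scrO_\Curve$ is a bounded resolution by locally free sheaves (\ref{thm: locallyfreeres}) to conclude that $\scrHom^\bullet_\scrX(\scrE,\scrE)$ represents $\RsHom_\scrX(\scrO_\Curve,\scrO_\Curve)$ in $\mathrm{D}(\Qcoh\scrX)$: the complex $\scrHom^\bullet(\scrE,\scrO_\Curve)$ manifestly represents the derived sheaf Hom, and the natural map from $\scrHom^\bullet(\scrE,\scrE)$ is a quasi-isomorphism because each $\scrE_n$ is locally free, so $\scrHom(\scrE_n,-)$ is exact. This gives $\Ext^i_\scrX(\scrO_\Curve,\scrO_\Curve)\cong \mathbb{H}^i\bigl(\scrX,\scrHom^\bullet_\scrX(\scrE,\scrE)\bigr)$. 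I would then verify that the cover $\scrU$ is acyclic for every $\scrHom^q(\scrE,\scrE)$: by \ref{lem:qucompactSep} the scheme $\scrX$ is separated, so $\scrU_{12}$ is affine; the pieces $\scrU_1,\scrU_2$ are affine by construction; and each $\scrHom^q(\scrE,\scrE)$ is a finite direct sum of twists of the structure sheaf, hence quasi-coherent, so has no higher cohomology on any of $\scrU_1,\scrU_2,\scrU_{12}$. The \v{C}ech-to-hypercohomology spectral sequence then degenerates to give $\mathrm{H}^i(\Tot K)\cong\mathbb{H}^i\bigl(\scrX,\scrHom^\bullet_\scrX(\scrE,\scrE)\bigr)$, which finishes the proof.

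The main obstacle is bookkeeping rather than conceptual: one must confirm that the sign $(-1)^p$ in \eqref{eqn:defdd} matches the usual totalisation convention so that $\dd^2=0$, which reduces to showing that $\mathfrak{d}$ and $\updelta$ anticommute on $K^{p,q}$. This in turn follows directly from \ref{lem:updelta}, since $\updelta$ is defined by composition with the fixed differential of $\scrE$ and therefore commutes with restriction to $\scrU_{12}$; thus no serious difficulty beyond careful sign-tracking is expected.
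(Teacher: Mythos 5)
Your proposal is correct, but it takes a genuinely different route from the paper. The paper's proof is essentially a citation: it observes that $(\scrC,\dd)$ coincides with the complex $\Hom^\bullet_{\mathrm{D}^{\mathrm{b}}_{\infty}(\scrX)}(\scrE,\scrE)$ of \cite[\S8.2.1]{Dbook} and then quotes \cite[p587]{Dbook} for the isomorphism $\mathrm{H}^i\cong\Hom_{\Db(\scrX)}(\scrE,\scrE[i])\cong\Ext^i_\scrX(\scrO_\Curve,\scrO_\Curve)$. You instead unwind that black box into the standard hypercohomology argument: $\scrE$ being a bounded locally free resolution, $\scrHom^\bullet(\scrE,\scrE)$ represents $\RsHom_\scrX(\scrO_\Curve,\scrO_\Curve)$, so $\Ext^i_\scrX(\scrO_\Curve,\scrO_\Curve)\cong\mathbb{H}^i(\scrX,\scrHom^\bullet(\scrE,\scrE))$, and the finite affine cover $\scrU$ (with $\scrU_{12}$ affine --- note this is immediate since $\scrU_{12}$ is the principal open $\{a\neq0\}$ of $\scrU_1$, so \ref{lem:qucompactSep} is not strictly needed for this point, though it is a valid alternative) is acyclic for the quasi-coherent sheaves $\scrHom^q(\scrE,\scrE)$, whence the \v{C}ech totalisation computes hypercohomology. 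The paper's route is shorter and reuses exactly the machinery it needs again in \ref{prop:DGAcheck} for the DG-enhancement statement; yours is self-contained and makes visible where quasi-coherence and the affineness of the cover and its intersection actually enter. One small correction to your sign remark: with the convention $\dd=\mathfrak{d}+(-1)^p\updelta$ of \eqref{eqn:defdd}, the identity $\dd^2=0$ requires $\mathfrak{d}$ and $\updelta$ to \emph{commute} (equivalently, $\mathfrak{d}$ anticommutes with the signed vertical differential $(-1)^p\updelta$), not to anticommute; this is precisely what your appeal to \ref{lem:updelta} delivers, since $\updelta$ commutes with restriction to $\scrU_{12}$, so the argument goes through unchanged.
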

\begin{proof}
$(\scrC,\dd)$ as defined above is equal to the complex $\Hom^\bullet_{\mathrm{D}^{\mathrm{b}}_{\infty}(\scrX)}(\scrE,\scrE)$ as defined in \cite[\S8.2.1]{Dbook}.  Thus
\[
\mathrm{H}^i(\scrC)\cong \mathrm{H}^i(\Hom^\bullet_{\mathrm{D}^{\mathrm{b}}_{\infty}(\scrX)}(\scrE,\scrE))\cong \Hom_{\Db(\scrX)}(\scrE,\scrE[i])\cong \Ext^i_\scrX(\scrO_\Curve,\scrO_\Curve),
\] 
where the middle isomorphism is e.g.\ \cite[p587]{Dbook}.
\end{proof}

\subsection{The \v{C}ech enhancement}\label{sec:DGAC}
Using the well-known \v{C}ech enhancement described in e.g.\ \cite[p587]{Dbook} or \cite[\S1.2]{CS}, the complex $(\scrC,\dd)$ can be upgraded to a DG-algebra.  The only slightly subtle point is the sign on the composition, which we explicitly recall here.

\begin{defin}
Given homogeneous $\asf=(\asf_1,\asf_2,\asf_{12})\in\scrC_i$
and $\bsf=(\bsf_1,\bsf_2,\bsf_{12})\in\scrC_j$, define
\begin{equation}\label{eqn:star}
\asf\star \bsf \colonequals
(\asf_1\circ\bsf_1, \,\,\,\asf_2\circ\bsf_2,\,\,\, \asf_{12}\circ {\bsf_2}_{|\scrU_{12}} + (-1)^i{\asf_1}_{|\scrU_{12}}\circ \bsf_{12})
\in\scrC_{i+j}
\end{equation}
and extend $\star$ to all of $\scrC$ by linearity.
\end{defin}

To set notation for the next result,  choose an injective resolution
\[
0\to\scrO_\Curve\to\scrI_0\to\scrI_1\to\hdots
\] 
of $\scrO_\Curve$. Then consider the DG-algebra $\End^{\mathrm{DG}}_\scrX(\scrI)=\bigoplus_{t\in\mathbb{Z}}\End^{\mathrm{DG}}_\scrX(\scrI)_t$, where
\[
\End^{\mathrm{DG}}_\scrX(\scrI)_t \colonequals \{ (f_s)_{s\in\mathbb{Z}}\mid f_s\colon \scrI_s\to \scrI_{s+t}\}.
\]
Multiplication is given by composition, and the differential is defined as in \S\ref{subsec:HomDGA}.

\begin{prop}\label{prop:DGAcheck}
$(\scrC,\star,\dd)$ is a \textnormal{DG}-algebra, and this is quasi-isomorphic to the \textnormal{DG}-algebra $\End^{\mathrm{DG}}_\scrX(\scrI)$.
\end{prop}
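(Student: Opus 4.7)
The plan is to prove the two assertions separately: first that $(\scrC, \star, \dd)$ is a DG-algebra, and second that it is quasi-isomorphic as a DG-algebra to $\End^{\mathrm{DG}}_\scrX(\scrI)$. Both are formal consequences of the \v{C}ech-enhancement machinery for derived categories of sheaves, developed in, e.g., \cite[\S8.2]{Dbook} and \cite[\S1.2]{CS}; the purpose of writing out the argument is essentially only to fix the sign conventions for use later. For the DG-algebra axioms I would verify associativity of $\star$, the identity $\dd^2=0$, and the graded Leibniz rule $\dd(\asf\star\bsf)=(\dd\asf)\star\bsf+(-1)^i\asf\star(\dd\bsf)$ for $\asf\in\scrC_i$, all by componentwise calculation. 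Associativity is routine; in the only non-trivial (third) component the two cross-terms combine using $(-1)^i(-1)^j=(-1)^{i+j}$ to match. The identity $\dd^2=0$ splits into $\updelta^2=0$ on the first two components (Lemma~\ref{lem:updelta}), and anticommutativity of $\dfrak$ with $\updelta$ on the third; the sign $(-1)^p$ in $\dd=\dfrak+(-1)^p\updelta$ is precisely what makes this cross-term cancel, and compatibility of $\updelta$ with restriction to $\scrU_{12}$ (again Lemma~\ref{lem:updelta}) is used here. The Leibniz rule is a Koszul-sign bookkeeping exercise, where the $(-1)^i$ inside \eqref{eqn:star} is engineered so that the differentials of the three components of $\asf\star\bsf$ assemble correctly.

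For the quasi-isomorphism, the cohomology-level statement $H^\bullet(\scrC)\cong\Ext^\bullet_\scrX(\scrO_\Curve,\scrO_\Curve)\cong H^\bullet(\End^{\mathrm{DG}}_\scrX(\scrI))$ combines Proposition~\ref{prop:Exti} with the classical computation of Ext via injective resolutions. To upgrade to a DG-algebra quasi-isomorphism I would choose a chain map $\upphi\colon\scrE\to\scrI$ lifting the identity on $\scrO_\Curve$, and introduce the hybrid total complex
\[
\scrM\colonequals\Tot\bigl(\vC^\bullet(\scrU,\scrHom^\bullet(\scrE,\scrI))\bigr).
\]
Although $\scrM$ is not itself a DG-algebra (it is only a DG-bimodule over $\scrC$ and $\End^{\mathrm{DG}}_\scrX(\scrI)$), the bimodule maps from $\scrC$ and from $\End^{\mathrm{DG}}_\scrX(\scrI)$ to $\scrM$ induced by $\upphi$ are both quasi-isomorphisms: the first because $\scrI$ is a bounded-below complex of injectives, and the second because injective sheaves have vanishing higher \v{C}ech cohomology and $\scrE\to\scrO_\Curve$ is a resolution. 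A standard argument then converts these bimodule equivalences into a zigzag of honest DG-algebra quasi-isomorphisms, for instance by passing through the DG-endomorphism algebra of $\scrM$ as a right $\End^{\mathrm{DG}}_\scrX(\scrI)$-module.

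The main obstacle is entirely notational: confirming that every map in the zigzag is a DG-algebra map and not merely a chain map, with all Koszul signs and \v{C}ech restrictions commuting with the $\star$-products. In practice the cleanest route is to invoke the formalism of \cite[\S1.2]{CS}, where the assignment $(\scrU,\scrF^\bullet)\mapsto\Tot(\vC^\bullet(\scrU,\scrF^\bullet))$ is a (lax) monoidal functor on sheaves of DG-algebras, so that both statements follow from formal properties of the construction applied to the two sheaf resolutions of $\scrO_\Curve$.
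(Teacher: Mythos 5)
Your proof of the first assertion is the same as the paper's (which simply says the DG-algebra axioms ``can be checked manually'', alternatively identifying $\star$ with the composition in the \v{C}ech DG category of \cite[p587]{Dbook}). For the second assertion you take a genuinely more hands-on route: the paper disposes of it in one line by citing that, since $\scrX$ is quasi-compact and separated (\ref{lem:qucompactSep}), the \v{C}ech DG enhancement of perfect complexes is quasi-equivalent to the injective one (\cite[\S1.2]{CS}, \cite[3.19]{LS}), whereas you build the explicit \v{C}ech--injective bimodule $\scrM=\Tot\bigl(\vC^\bullet(\scrU,\scrHom^\bullet(\scrE,\scrI))\bigr)$ and run the standard bimodule-to-zigzag argument. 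This buys self-containedness and an explicit comparison map, but two steps need more care than your sketch gives. First, the map $\scrC\to\scrM$ is not a quasi-isomorphism merely ``because $\scrI$ is a bounded-below complex of injectives'': $\Tot\vC(\scrU,-)$ does not send arbitrary quasi-isomorphisms of complexes of sheaves to quasi-isomorphisms, so you should argue patchwise on the three affine opens $\scrU_1,\scrU_2,\scrU_{12}$ (affine precisely because $\scrX$ is separated), where $\scrE$ restricts to a bounded complex of projective modules and hence $\Hom^\bullet(\scrE|_U,-)$ preserves the quasi-isomorphism $\scrE|_U\to\scrI|_U$, and then conclude via the two-column filtration; your argument for $\End^{\mathrm{DG}}_\scrX(\scrI)\to\scrM$ (flasqueness of $\scrHom^q(\scrE,\scrI)$, so the augmentation from global sections is a quasi-isomorphism) is fine. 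Second, converting the two bimodule quasi-isomorphisms into a zigzag of DG-algebra quasi-isomorphisms is standard but not free: note that $\End^{\mathrm{DG}}_\scrX(\scrI)$ acts on $\scrM$ on the \emph{left} (postcomposition) and $\scrC$ on the right, and passing through an endomorphism algebra of $\scrM$ requires either an h-projective/h-injective replacement of $\scrM$ over the relevant side or the standard lemma that homotopy-equivalent objects have zigzag-quasi-isomorphic endomorphism DG-algebras --- which is essentially the same input the paper imports wholesale from \cite{CS} and \cite{LS}. With those two points patched (or, as you note yourself, by invoking \cite[\S1.2]{CS} directly, which is exactly what the paper does), your argument is correct.
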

\begin{proof}
The first statement can be checked manually. Alternatively, as in the proof of \ref{prop:Exti}, $(\scrC,\dd)$ equals the complex $\Hom^\bullet_{\mathrm{D}^{\mathrm{b}}_{\infty}(\scrX)}(\scrE,\scrE)$.  Now $\mathrm{D}^{\mathrm{b}}_{\infty}(\scrX)$ is in fact a DG category, under the composition described in \cite[p587]{Dbook}.  In our restricted setting, with only two open affine sets, this translates precisely into the operation $\star$ defined above (see also \cite{AspinwallKatz}). The second statement follows since $\scrX$ is quasi-compact and separated by \ref{lem:qucompactSep}, so the \v{C}ech DG enhancement of perfect complexes is quasi-equivalent to the injective DG enhancement of perfect complexes; see \cite[\S1.2]{CS} or \cite[3.19]{LS}.
\end{proof}

\section{Generators and Homotopies}

This section constructs certain elements of the DG-algebra $(\scrC,\star,\dd)$ from \S\ref{sec:DGAC}.  In \S\ref{subsec:degone}--\ref{subsec:degthree} some key elements of $\scrC$ are introduced. Various easy relations between these elements involving both $\dd$ and $\star$ are then computed in \S\ref{sec:dd} and \S\ref{sec:star}.  Inductive notation is introduced in \S\ref{sec:inductivesetup}. The totality of elements considered is summarised in \S\ref{rem:shapes2}. The whole section is elementary, essentially being nothing more than multiplication of matrices, with care taken about degrees.

To set and ease notation throughout, under Setup~\ref{setup:key} write
\[
0\to\scrE_3 \xrightarrow{\dsf_{3}} \scrE_{2} \xrightarrow{\dsf_{2}} \scrE_{1} \xrightarrow{\dsf_{1}} \scrE_0 \to 0
\]
for the complex of locally free sheaves on $\scrX$ constructed in \ref{thm: locallyfreeres}
that resolves $\scrO_\Curve$.

\subsection{Degree One Generators}\label{subsec:degone}
Consider $\xsf \colonequals (\xsf_1, \xsf_2, 0) \in\scrC_1$, where
\[
\begin{tikzpicture}
\node () at (-2.75,0) {$\xsf _1=$};
\node (B1) at (-1,1) {$\scrE_3$};
\node (C1) at (3.25,1) {$\scrE_2$};
\node (D1) at (7,1) {$\scrE_1$};
\node (E1) at (8,1) {$\scrE_0$};
\draw[->] (B1)--
node[above]{$\begin{psmallmatrix}v_2 \\ -a^{\kx+1}\\ -1\end{psmallmatrix}$}
(C1);
\draw[->] (C1)--
node[above]{$\begin{psmallmatrix}
a^{\kx+1}&v_2&0\\
v_2^{\ordP-2}\!A&-v_1&b^{2-\kx}w_1\\
-1&0&-v_2\end{psmallmatrix}$}
(D1);
\draw[->] (D1)--
(E1);

\node (B2) at (-2,-1.5) {$\scrE_3$};
\node (C2) at (-1,-1.5) {$\scrE_2$};
\node (D2) at (3.25,-1.5) {$\scrE_1$};
\node (E2) at (7,-1.5) {$\scrE_0$};
\draw[->] (B2)--
(C2);
\draw[->] (C2)--
node[above]{$\begin{psmallmatrix}
a^{\kx+1}&v_2&0\\
v_2^{\ordP-2}\!A&-v_1&b^{2-\kx}\!w_1\\
-1&0&-v_2\end{psmallmatrix}$}
(D2);
\draw[->] (D2)--
node[above]{$\begin{psmallmatrix}v_1 & v_2& b^{2-\kx}w_1\end{psmallmatrix}$}
(E2);

\draw[->](B1)--node[left,pos=0.35]{$\begin{psmallmatrix}-1\\0\\0\end{psmallmatrix}$}(C2);
\draw[->](C1)--node[gap,pos=0.35]{$\begin{psmallmatrix}0&-1&0\\v_2^{\ordP-3}A&0&0\\0&0&1\end{psmallmatrix}$}(D2);
\draw[->](D1)--node[right,pos=0.35]{$\begin{psmallmatrix}0&-1&0\end{psmallmatrix}$}(E2);
\end{tikzpicture}
\]
\[
\begin{tikzpicture}
\node () at (-2.75,0) {$\xsf_2=$};
\node (B1) at (-1,1) {$\scrE_3$};
\node (C1) at (3.25,1) {$\scrE_2$};
\node (D1) at (7,1) {$\scrE_1$};
\node (E1) at (8,1) {$\scrE_0$};
\draw[->] (B1)--
node[below]{$\begin{psmallmatrix}w_2 \\ -1\\ -b^{\ky+1}\end{psmallmatrix}$}
(C1);
\draw[->] (C1)--
node[below]{$\begin{psmallmatrix}
1&w_2&0\\
w_2^{\ordP-2}\!B&-a^{2-\ky}v_1&w_1\\
-b^{\ky+1}&0&-w_2\end{psmallmatrix}$}
(D1);
\draw[->] (D1)--
(E1);

\node (B2) at (-2,-1.5) {$\scrE_3$};
\node (C2) at (-1,-1.5) {$\scrE_2$};
\node (D2) at (3.25,-1.5) {$\scrE_1$};
\node (E2) at (7,-1.5) {$\scrE_0$};
\draw[->] (B2)--
(C2);
\draw[->] (C2)--
node[below]{$\begin{psmallmatrix}
1&w_2&0\\
w_2^{\ordP-2}\!B&-a^{2-\ky}v_1&w_1\\
-b^{\ky+1}&0&-w_2\end{psmallmatrix}$}
(D2);
\draw[->] (D2)--
node[below]{$\begin{psmallmatrix}a^{2-\ky}v_1 &w_2& w_1\end{psmallmatrix}$}
(E2);

\draw[->](B1)--node[left,pos=0.65]{${\scriptstyle b}\begin{psmallmatrix}-1\\0\\0\end{psmallmatrix}$}(C2);
\draw[->](C1)--node[gap,pos=0.65]{${\scriptstyle b}\begin{psmallmatrix}0&-1&0\\w_2^{\ordP-3}B&0&0\\0&0&1\end{psmallmatrix}$}(D2);
\draw[->](D1)--node[right,pos=0.65]{${\scriptstyle b}\begin{psmallmatrix}0&-1&0\end{psmallmatrix}$}(E2);
\end{tikzpicture}
\]
Under the sign convention defining $\updelta$ in \eqref{eqn:updelta}, which for odd degree is down-across+across-down,  $\updelta(\xsf_1)=0=\updelta(\xsf_2)$.  Further, it follows from \eqref{eqn:glueconvention} and \ref{eq!BD} that each pair of vertical maps glue to give a global map, so $\mathfrak{d}(\xsf_1,\xsf_2)=0$.  Hence $\dd \xsf=0$.

Similarly, write $\ysf= (\ysf_1,\ysf_2,0)\in\scrC_1$ where $\ysf_1$
is defined by multiplying all the vertical maps of $\xsf_1$ by $a$, and $\ysf_2$ by dividing all the vertical maps of $\xsf_2$ by $b$.
It follows, in a similar way, that $\dd(\ysf)=0$.

\begin{lemma}\label{lem:Ext1linInd}
With notation as above, $\xsf$ and $\ysf$ give linearly independent elements of $\mathrm{H}^1(\scrC)\cong\Ext^1(\scrO_\Curve,\scrO_\Curve)\cong\mathbb{K}^2$.
\end{lemma}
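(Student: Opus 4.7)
The plan is to first reduce to linear independence. One has $\Ext^1_\scrX(\scrO_\Curve, \scrO_\Curve) \cong \mathbb{K}^2$ by the classical identification of first-order embedded deformations with $H^0(\Curve, N_{\Curve|\scrX}) = H^0(\mathbb{P}^1, \scrO(-3)\oplus\scrO(1)) = \mathbb{K}^2$ (using \ref{lem:normalbundle} and $H^1(\scrO_\Curve) = 0$). In view of \ref{prop:Exti}, the lemma thus reduces to showing that $\xsf$ and $\ysf$ are $\mathbb{K}$-linearly independent in $\mathrm{H}^1(\scrC)$.

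For this, I would suppose that $\alpha\xsf + \beta\ysf = \dd\zsf$ for some $\zsf = (\zsf_1, \zsf_2, \zsf_{12}) \in \scrC_0$ and examine a single component. The cleanest choice is the $\scrU_1$-part at the position $\scrE_1\to\scrE_0$, where unpacking \eqref{eqn:defdd} and substituting the explicit formulas for $\xsf_1, \ysf_1$ gives
\[
\dsf_1\circ(\zsf_1)_1 - (\zsf_1)_0\circ\dsf_1 = \begin{psmallmatrix}0 & -\alpha-\beta a & 0\end{psmallmatrix}
\]
as $1\times 3$ matrices over $\mathbb{K}[a, v_2, v_1]$.

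The key observation is that every entry of $\dsf_1 = (v_1,\ v_2,\ a^{\kx+1}v_1 + v_2^{\ordP-1}A)$ lies in the ideal $(v_1, v_2)$ defining $\Curve\cap\scrU_1$; for the third entry this uses $\ordP\geq 3$. Reducing both sides of the displayed equation modulo $(v_1, v_2)$ therefore annihilates the left-hand side, while the right-hand side descends to $(0, -\alpha-\beta a, 0)$ in $\mathbb{K}[a,v_2,v_1]/(v_1,v_2) = \mathbb{K}[a]$. Comparing constant terms and coefficients of $a$ then forces $\alpha = \beta = 0$.

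The argument is essentially formal once the right component is chosen, so there is no real obstacle; the mild subtlety is recognising that $I_\Curve$-reduction is the correct move. Geometrically, the residues $-1$ and $-a$ at the middle slot of $\scrE_1 \to \scrE_0$ are precisely the two basis sections $1, a$ of $\scrO(1) \subset N_{\Curve|\scrX}$, so this reduction is the explicit realisation, on the chosen generators, of the isomorphism $\mathrm{H}^1(\scrC) \cong H^0(\Curve, N_{\Curve|\scrX})$.
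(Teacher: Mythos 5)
Your proposal is correct and follows essentially the same route as the paper: the paper also reduces to linear independence, restricts $\dd\upalpha$ to the $\scrU_1$-chart, notes that every entry of the $\scrE_1\to\scrE_0$ component lies in $I_\Curve=(v_1,v_2)$, and concludes because the middle entry $-(\uplambda+\upmu a)$ of $\uplambda\xsf_1+\upmu\ysf_1$ does not vanish on $\Curve$ unless $\uplambda=\upmu=0$. The only cosmetic difference is that you phrase the conclusion as reduction to $\mathbb{K}[a]$ and comparison of coefficients, which is the same computation.
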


\begin{proof}
The first isomorphism holds by \ref{prop:Exti} and the second holds since $\scrN_{\Curve|\scrX}\cong\scrO(-3)\oplus\scrO(1)$.

Both $\xsf$ and $\ysf$ are closed, as observed above,
so suppose there exists $\upalpha = (\upalpha_1, \upalpha_2, \upalpha_{12})\in\scrC_0$ such that $\dd \upalpha = \uplambda\xsf + \upmu\ysf$ for some $\uplambda,\upmu\in\K$.
Restricting to $\scrU_1$,
we have $\updelta(\upalpha_1) = \uplambda\xsf_1 + \upmu\ysf_1$,
and by \eqref{eqn:updelta} and \ref{thm: locallyfreeres} the component $\scrE_1\rightarrow\scrE_0$ of this element is
\[
\updelta(\upalpha_1) = (v_1,v_2,a^{\kx+1}v_1 + v_2^{\ordP-1}A)M + c(v_1,v_2,a^{\kx+1}v_1 + v_2^{\ordP-1}A)
\]
for some $3\times 3$ matrix $M$ with entries in $\scrO_{\scrU_1}$, and some $c\in\scrO_{\scrU_1}$. In particular, all entries of $\updelta(\upalpha_1)$
lie in the ideal $\langle v_1,v_2\rangle\subset\scrO_{\scrU_1}$ and so vanish on $\Curve$.
In contrast, the second entry $-(\uplambda+\upmu a)$ of $\uplambda\xsf_1 + \upmu\ysf_1 = \uplambda(0,-1,0) + \upmu(0,-a,0)$ does not vanish on $\Curve$ unless $\uplambda=\upmu=0$.
\end{proof}

\subsection{Shapes and Degree One Homotopies}\label{subsec:shapes}

Alongside closed elements of $\scrC$ such as $\xsf$ and $\ysf$ of \S\ref{subsec:degone}
that appear in Theorem~\ref{thm: main intro},
it is convenient to define simpler elements of $\scrC$ that we call {\em shapes},
which are akin to elementary matrices.
As we will see repeatedly, these shapes will help distinguish between
identities that hold for trivial reasons --- matrix products vanishing because
all terms have a factor of zero --- and those that vanish because some nontrivial cancellation occurs.

With this in mind, and adopting the convention that degree one elements are denoted in lower case sans font, consider $(\gsf_{1}, \gsf_{2},0)\in\scrC_1$ defined by
\[
\begin{tikzpicture}
\node () at (-2.5,0) {$\gsf_{1} =$};
\node (B1) at (-0.25,1) {$\scrE_3$};
\node (C1) at (3.25,1) {$\scrE_2$};
\node (D1) at (6.25,1) {$\scrE_1$};
\node (E1) at (8,1) {$\scrE_0$};
\draw[->] (B1)--
node[above]{$\begin{psmallmatrix}v_2 \\ -a^{\kx+1}\\ -1\end{psmallmatrix}$}
(C1);
\draw[->] (C1)--
node[above]{$\begin{psmallmatrix}
a^{\kx+1}&v_2&0\\
v_2^{\ordP -2}\!A&-v_1&b^{2-\kx}w_1\\
-1&0&-v_2\end{psmallmatrix}$}
(D1);
\draw[->] (D1)--
(E1);
\node (B2) at (-1.75,-1) {$\scrE_3$};
\node (C2) at (-0.25,-1) {$\scrE_2$};
\node (D2) at (3.25,-1) {$\scrE_1$};
\node (E2) at (6.25,-1) {$\scrE_0$};
\draw[->] (B2)--
(C2);
\draw[->] (C2)--
node[above]{$\begin{psmallmatrix}
a^{\kx+1}&v_2&0\\
v_2^{\ordP -2}\!A&-v_1&b^{2-\kx}w_1\\
-1&0&-v_2\end{psmallmatrix}$}
(D2);
\draw[->] (D2)--
node[above]{$\begin{psmallmatrix}v_1 & v_2& b^{2-\kx}w_1\end{psmallmatrix}$}
(E2);
\draw[->](B1)--node[left,pos=0.4]{$\begin{psmallmatrix}0\\0\\0\end{psmallmatrix}$}(C2);
\draw[->](C1)--node[right,pos=0.4]{$\begin{psmallmatrix}0&0&0\\0&0&1\\0&0&0\end{psmallmatrix}$}(D2);
\draw[->](D1)--node[right,pos=0.4]{$\begin{psmallmatrix}0&0&1\end{psmallmatrix}$}(E2);
\end{tikzpicture}
\]
with $\gsf_2$ being given by the same three vertical matrices, but considered on the other chart.

Similarly, define $(\hsf_{1}, \hsf_{2},0)\in\scrC_1$ by
\[
\begin{tikzpicture}
\node () at (-2.5,0) {$\hsf_{1}=$};
\node (B1) at (-0.25,1) {$\scrE_3$};
\node (C1) at (3.25,1) {$\scrE_2$};
\node (D1) at (6.25,1) {$\scrE_1$};
\node (E1) at (8,1) {$\scrE_0$};
\draw[->] (B1)--
node[above]{$\begin{psmallmatrix}v_2 \\ -a^{\kx+1}\\ -1\end{psmallmatrix}$}
(C1);
\draw[->] (C1)--
node[above]{$\begin{psmallmatrix}
a^{\kx+1}&v_2&0\\
v_2^{\ordP -2}\!A&-v_1&b^{2-\kx}w_1\\
-1&0&-v_2\end{psmallmatrix}$}
(D1);
\draw[->] (D1)--
(E1);
\node (B2) at (-1.75,-1) {$\scrE_3$};
\node (C2) at (-0.25,-1) {$\scrE_2$};
\node (D2) at (3.25,-1) {$\scrE_1$};
\node (E2) at (6.25,-1) {$\scrE_0$};
\draw[->] (B2)--
(C2);
\draw[->] (C2)--
node[above]{$\begin{psmallmatrix}
a^{\kx+1}&v_2&0\\
v_2^{\ordP -2}\!A&-v_1&b^{2-\kx}w_1\\
-1&0&-v_2\end{psmallmatrix}$}
(D2);
\draw[->] (D2)--
node[above]{$\begin{psmallmatrix}v_1 & v_2& b^{2-\kx}w_1\end{psmallmatrix}$}
(E2);
\draw[->](B1)--node[left,pos=0.4]{$\begin{psmallmatrix}0\\0\\0\end{psmallmatrix}$}(C2);
\draw[->](C1)--node[right,pos=0.4]{$\begin{psmallmatrix}0&0&0\\0&1&0\\0&0&0\end{psmallmatrix}$}(D2);
\draw[->](D1)--node[right,pos=0.4]{$\begin{psmallmatrix}-1&0&0\end{psmallmatrix}$}(E2);
\end{tikzpicture}
\]
with $\hsf_2$ being given by the same three vertical matrices, but considered on the other chart.

\begin{notation}\label{not:definitionk}
For $\gsf$ and $\hsf$ defined above, define $\ksf_i\in\scrC_1$ by
\[
\ksf_i\colonequals
\begin{cases}
(a^i\cdot\gsf_1,\,\,b^{\kx-2-i}\cdot \gsf_2,0)&\mbox{if }0\leq i\leq \kx-2\\
(a^{i-(\kx+1)}\cdot\hsf_1,\,\,b^{(\kx+\ky-1)-i}\cdot\hsf_2,0)&\mbox{if }\kx+1\leq i\leq \kx+\ky-1
\end{cases}
\]
where terms such as $a^i\cdot \gsf_1$ should be interpreted as multiplying all the three vertical maps of $\gsf_1$ by the coefficient $a^i$.
\end{notation}

\begin{remark}\label{rem:shapes}
In \ref{not:definitionk}, and in similar definitions such as \ref{def:XY} below, we say that $\ksf_i$ is
{\em based on the shape} $\gsf$ (or $\hsf$), as a reminder that its three components are
polynomial multiples of the corresponding components of $\gsf$ (or $\hsf$).
This is not strictly essential, but it does help to navigate the array of constructions, and is summarised in \S\ref{rem:shapes2}.
\end{remark}

Lastly consider the shape $(\zsf_{1}, \zsf_{2},0)\in\scrC_1$, defined as
\[
\begin{tikzpicture}
\node () at (-2.5,0) {$\zsf_{1} =$};
\node (B1) at (-0.25,1) {$\scrE_3$};
\node (C1) at (3.25,1) {$\scrE_2$};
\node (D1) at (6.25,1) {$\scrE_1$};
\node (E1) at (8,1) {$\scrE_0$};
\draw[->] (B1)--
node[above]{$\begin{psmallmatrix}v_2 \\ -a^{\kx+1}\\ -1\end{psmallmatrix}$}
(C1);
\draw[->] (C1)--
node[above]{$\begin{psmallmatrix}
a^{\kx+1}&v_2&0\\
v_2^{\ordP -2}\!A&-v_1&b^{2-\kx}w_1\\
-1&0&-v_2\end{psmallmatrix}$}
(D1);
\draw[->] (D1)--
(E1);
\node (B2) at (-1.75,-1) {$\scrE_3$};
\node (C2) at (-0.25,-1) {$\scrE_2$};
\node (D2) at (3.25,-1) {$\scrE_1$};
\node (E2) at (6.25,-1) {$\scrE_0$};
\draw[->] (B2)--
(C2);
\draw[->] (C2)--
node[above]{$\begin{psmallmatrix}
a^{\kx+1}&v_2&0\\
v_2^{\ordP -2}\!A&-v_1&b^{2-\kx}w_1\\
-1&0&-v_2\end{psmallmatrix}$}
(D2);
\draw[->] (D2)--
node[above]{$\begin{psmallmatrix}v_1 & v_2& b^{2-\kx}w_1\end{psmallmatrix}$}
(E2);
\draw[->](B1)--node[left,pos=0.4]{$\begin{psmallmatrix}0\\0\\0\end{psmallmatrix}$}(C2);
\draw[->](C1)--node[right,pos=0.4]{$\begin{psmallmatrix}0&0&0\\-1&0&0\\0&0&0\end{psmallmatrix}$}(D2);
\draw[->](D1)--node[right,pos=0.4]{$\begin{psmallmatrix}0&0&0\end{psmallmatrix}$}(E2);
\end{tikzpicture}
\]
with $\zsf_2$ being given by the same three vertical matrices, but considered on the other chart.  This shape will appear again as a killing homotopy in \S\ref{sec:dd} and \S\ref{sec:inductivesetup}.

\subsection{Degree Two Shapes and Generators} \label{subsec:degtwo}
Adopting the convention that degree two elements will be denoted in upper case sans font, consider the shape
 $\Zsf=(\Zsf_1,\Zsf_2,0)\in\scrC_2$ defined as
\[
\begin{tikzpicture}
\node () at (0,0) {$\Zsf_1=$};
\node (C1) at (3.25,1) {$\scrE_3$};
\node (D1) at (6.25,1) {$\scrE_2$};
\node (E1) at (7.75,1) {$\scrE_1$};
\node (F1) at (9.25,1) {$\scrE_0$};
\draw[->] (E1)--(F1);
\draw[->] (C1)--
node[above]{$\begin{psmallmatrix}v_2 \\ -a^{\kx+1}\\ -1\end{psmallmatrix}$}
(D1);
\draw[->] (D1)--(E1);
\node (B2) at (0.25,-1) {$\scrE_3$};
\node (C2) at (1.75,-1) {$\scrE_2$};
\node (D2) at (3.25,-1) {$\scrE_1$};
\node (E2) at (6.25,-1) {$\scrE_0$};
\draw[->] (B2)--(C2);
\draw[->] (C2)--(D2);
\draw[->] (D2)--
node[above]{$\begin{psmallmatrix}v_1 & v_2& b^{2-\kx}w_1\end{psmallmatrix}$}(E2);
\draw[->](C1)--node[left]{$\begin{psmallmatrix}0\\-1\\0\end{psmallmatrix}$}(D2);
\draw[->](D1)--node[right]{$\begin{psmallmatrix}-1&0&0\end{psmallmatrix}$}(E2);
\end{tikzpicture}
\]
with $\Zsf_2$ being given by the same two vertical matrices, but considered on the other chart.

\begin{notation}\label{def:XY}
Set $\Xsf= (a^\kx\cdot \Zsf_{1}, \,\,b^{\ky-1}\cdot\Zsf_{2}, 0)\in\scrC_2$ and $\Ysf= (a^{\kx-1}\cdot \Zsf_{1},\,\, b^{\ky}\cdot\Zsf_{2}, 0)\in\scrC_2$.
\end{notation}
Thus both $\Xsf,\Ysf\in\scrC_2$ are based on the shape $\Zsf$.
Under the sign convention defining $\updelta$ in \eqref{eqn:updelta}, which for even degree is down-across$-$across-down, $\updelta(\Xsf_1)=0=\updelta(\Xsf_2)$.  Further, it follows from \eqref{eqn:glueconvention} and \eqref{eq!glue} that the pair of vertical maps $(\scrE_3\rightarrow\scrE_0)_{|\scrU_i}$ glue to give a global map, so $\mathfrak{d}(\Xsf_1,\Xsf_2)=0$.  Hence $\dd \Xsf=0$, and similarly $\dd\Ysf=0$.

These give generators of $\mathrm{H}^2(\scrC)$.

\begin{lemma}\label{lem:Ext2linInd}
$\Xsf$ and $\Ysf$ are linearly independent elements of $\mathrm{H}^2(\scrC)\cong\Ext^2(\scrO_\Curve,\scrO_\Curve)\cong\mathbb{K}^2$.
\end{lemma}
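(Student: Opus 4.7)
The strategy mirrors the proof of \ref{lem:Ext1linInd}: since both $\Xsf$ and $\Ysf$ are $\dd$-closed (as observed immediately before the statement), they descend to classes in $\mathrm{H}^2(\scrC) \cong \Ext^2_\scrX(\scrO_\Curve,\scrO_\Curve)$. The local-to-global spectral sequence applied to $\scrN_{\Curve|\scrX} \cong \scrO(-3)\oplus\scrO(1)$ gives $\Ext^2 \cong \mathrm{H}^1(\scrN) \cong \mathrm{H}^1(\scrO(-3)) \cong \mathbb{K}^2$, so it suffices to show that any relation $\uplambda\Xsf + \upmu\Ysf = \dd\upbeta$ forces $\uplambda = \upmu = 0$.

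I would inspect the $\scrE_2 \to \scrE_0$ component of this equation on $\scrU_1$, which by \eqref{eqn:updelta} equals $\dsf_1 \circ \upbeta_1^{(2)} + \upbeta_1^{(1)} \circ \dsf_2$. The key simplification is that $\dsf_1 = (v_1,\,v_2,\,b^{2-\kx}w_1)$ vanishes on $\Curve$, because $b^{2-\kx}w_1 = a^{\kx+1}v_1 + v_2^{\ordP-1}A$ lies in $(v_1,v_2)$. Writing $\upbeta_1^{(1)}|_\Curve = (r_1,r_2,r_3)$, only the first column of $\dsf_2|_\Curve$ then survives, and comparison with $(\uplambda\Xsf_1 + \upmu\Ysf_1)_2|_\Curve = (-\uplambda a^\kx - \upmu a^{\kx-1},\,0,\,0)$ reduces the equation to
\[
a^{\kx+1}r_1 - r_3 = -\uplambda a^\kx - \upmu a^{\kx-1}.
\]

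The next step is to constrain the $\mathbb{K}[a]$-degrees of $r_1$ and $r_3$. The cocycle condition on the $\scrE_1 \to \scrE_0$ slot reads $\upbeta_1^{(1)}|_{\scrU_{12}} - \upbeta_2^{(1)}|_{\scrU_{12}} = \dsf_1 \circ \upbeta_{12}^{(1)} - \upbeta_{12}^{(0)} \circ \dsf_1$, and $\dsf_1|_\Curve = 0$ again collapses the right-hand side. Hence $\upbeta_1^{(1)}|_\Curve$ and $\upbeta_2^{(1)}|_\Curve$ patch into a global morphism $\scrE_1|_\Curve \to \scrO_\Curve$. Since $\scrE_1|_\Curve = \scrO(2-\ky)\oplus\scrO(-1)\oplus\scrO(2-\kx)$, such a morphism is a global section of $\scrO(\ky-2)\oplus\scrO(1)\oplus\scrO(\kx-2)$, and consequently $r_1$ is a polynomial in $a$ of degree at most $\ky-2$ and $r_3$ one of degree at most $\kx-2$ (or zero when these bounds are negative).

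The main, and only delicate, step is then a clean degree-gap count: $a^{\kx+1}r_1$ is supported in powers of $a$ of exponent at least $\kx+1$, while $r_3$ is supported in exponents at most $\kx-2$, whereas the right-hand side is supported precisely on the intermediate exponents $\kx-1$ and $\kx$, which lie in the gap between these two ranges. This forces $\uplambda = \upmu = 0$, contradicting the assumption and completing the proof. The symmetric computation on $\scrU_2$ yields $s_1 - b^{\ky+1}s_3 = -\uplambda b^{\ky-1} - \upmu b^\ky$, exhibiting an analogous gap at exponents $\ky-1,\ky$ and providing a cross-check.
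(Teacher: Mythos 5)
Your argument is correct and follows the paper's proof essentially verbatim: the same reduction of the $\scrE_2\to\scrE_0$ component modulo $I_\Curve$ to $a^{\kx+1}r_1-r_3\equiv-\uplambda a^{\kx}-\upmu a^{\kx-1}$, the same use of the vanishing \v{C}ech component to control the restriction to $\Curve$, and the same degree-gap conclusion. The only cosmetic difference is that you package the paper's transition-function computation $e_3(a)\equiv a^{\kx-2}e_3'(a^{-1})$ as a global section of $\scrHom(\scrE_1|_\Curve,\scrO_\Curve)$, and the extra bound on $r_1$ you extract is not actually needed.
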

\begin{proof}
This is similar to \ref{lem:Ext1linInd}, with only linear independence to check.
Suppose there exists $\csf = (\csf_1, \csf_2, \csf_{12})\in\scrC_1$ such that $\dd \csf = \uplambda\Xsf + \upmu\Ysf$ for some $\uplambda,\upmu\in\K$.
Restricting to $\scrU_1$,
we have $\updelta(\csf_1) = \uplambda\Xsf_1 + \upmu\Ysf_1$,
and by \eqref{eqn:updelta} and \ref{thm: locallyfreeres} the component $\scrE_2\rightarrow\scrE_0$ of this element is
\[
\updelta(\csf_1) = (v_1,v_2,a^{\kx+1}v_1 + v_2^{\ordP-1}A)M + (e_1,e_2,e_3) \Mat(\dsf_2)
\]
for some $3\times 3$ matrix $M$ with entries in $\scrO_{\scrU_1}$, and some $e_i\in\scrO_{\scrU_1}$, where $\Mat(\dsf_2)$ is the matrix of $\dsf_2$
on $\scrU_1$ from \ref{thm: locallyfreeres}.
Considering only the first component of this, and equating to that of $\uplambda\Xsf_1 + \upmu\Ysf_1$, we have at once that
\begin{equation}\label{eq!c3modC}
a^{\kx+1} e_1 - e_3 \equiv -\uplambda a^\kx -\upmu a^{\kx-1} \mod I_\Curve = \left<v_1,v_2\right>.
\end{equation}
Consider now $\csf_{12}$, which is an element in $\bigoplus_{n\in\Z} \scrHom_{\scrU_{12}}(\scrE_n,\scrE_{n})$, and work in the coordinates on $\scrU_{12}$ induced from $\scrU_1$.  It is immediate that the components of
$\updelta\csf_{12}\colon\scrE_1\rightarrow\scrE_0$
all lie in $I_\Curve$, since the two summands of this map each factor through $\dsf_1$.
Thus the condition that $(\dd\csf)_{12}=0$ implies that
\begin{equation}\label{eq!vanishmodC}
{\csf_1}_{|\scrU_{12}} - {\csf_2}_{|\scrU_{12}} \equiv 0 \mod I_\Curve.
\end{equation}
Expressing the component $\csf_2\colon\scrE_1\rightarrow\scrE_0$ in coordinates by
$(e_1',e_2',e_3')$ for $e_i'\in\scrO_{\scrU_2}$, then modulo $I_\Curve$ each $e_i'$ is a
polynomial in $b\in\scrO_{\scrU_2}$. In particular, modulo $I_\Curve$ and working
in the coordinates of $\scrU_{12}\subset\scrU_{1}$ with $b=a^{-1}$,
\eqref{eq!vanishmodC} implies for $e_3'(b)$ that
\[
e_3(a) \equiv a^{\kx-2} e_3'(a^{-1})  \mod I_\Curve.
\]
The righthand side is a polynomial in $a$ of degree at most $\kx-2$,
so \eqref{eq!c3modC} implies that $\uplambda=\upmu=0$.
\end{proof}

Now consider the shape $(\Gsf_1,\Gsf_2,0)\in\scrC_2$, defined as
\[
\begin{tikzpicture}
\node () at (0,0) {$\Gsf_1=$};
\node (C1) at (3.25,1) {$\scrE_3$};
\node (D1) at (6.25,1) {$\scrE_2$};
\node (E1) at (7.75,1) {$\scrE_1$};
\node (F1) at (9.25,1) {$\scrE_0$};
\draw[->] (E1)--(F1);
\draw[->] (C1)--
node[above]{$\begin{psmallmatrix}v_2 \\ -a^{\kx+1}\\ -1\end{psmallmatrix}$}
(D1);
\draw[->] (D1)--(E1);
\node (B2) at (0.25,-1) {$\scrE_3$};
\node (C2) at (1.75,-1) {$\scrE_2$};
\node (D2) at (3.25,-1) {$\scrE_1$};
\node (E2) at (6.25,-1) {$\scrE_0$};
\draw[->] (B2)--(C2);
\draw[->] (C2)--(D2);
\draw[->] (D2)--
node[above]{$\scriptstyle\left(\begin{smallmatrix}v_1 & v_2& b^{2-\kx}w_1\end{smallmatrix}\right)$}(E2);
\draw[->](C1)--node[left]{$\begin{psmallmatrix}0\\0\\0\end{psmallmatrix}$}(D2);
\draw[->](D1)--node[right]{$\begin{psmallmatrix}0&0&-1\end{psmallmatrix}$}(E2);
\end{tikzpicture}
\]
with $\Gsf_2$ being given by the same two vertical matrices, but considered on the other chart.

Similarly, define $(\Hsf_1,\Hsf_2,0)$ where
\[
\begin{tikzpicture}
\node () at (0,0) {$\Hsf_1=$};
\node (C1) at (3.25,1) {$\scrE_3$};
\node (D1) at (6.25,1) {$\scrE_2$};
\node (E1) at (7.75,1) {$\scrE_1$};
\node (F1) at (9.25,1) {$\scrE_0$};
\draw[->] (E1)--(F1);
\draw[->] (C1)--
node[above]{$\begin{psmallmatrix}v_2 \\ -a^{\kx+1}\\ -1\end{psmallmatrix}$}
(D1);
\draw[->] (D1)--(E1);
\node (B2) at (0.25,-1) {$\scrE_3$};
\node (C2) at (1.75,-1) {$\scrE_2$};
\node (D2) at (3.25,-1) {$\scrE_1$};
\node (E2) at (6.25,-1) {$\scrE_0$};
\draw[->] (B2)--(C2);
\draw[->] (C2)--(D2);
\draw[->] (D2)--
node[above]{$\scriptstyle\left(\begin{smallmatrix}v_1 & v_2& b^{2-\kx}w_1\end{smallmatrix}\right)$}(E2);
\draw[->](C1)--node[left]{$\begin{psmallmatrix}0\\0\\0\end{psmallmatrix}$}(D2);
\draw[->](D1)--node[right]{$\begin{psmallmatrix}0&-1&0\end{psmallmatrix}$}(E2);
\end{tikzpicture}
\]
with $\Hsf_2$ being given by the same two vertical matrices, but considered on the other chart.  The following should be viewed as the degree two version of \ref{not:definitionk}, with slightly different superscripts, and slightly larger intervals.

\begin{notation}\label{def:Ki}
With $\Gsf$ and $\Hsf$ defined above, define $\Ksf_i\in\scrC_2$ by
\[
\Ksf_i\colonequals
\begin{cases}
(a^i\cdot\Gsf_1,\,\,b^{\kx-1-i}\cdot \Gsf_2,0)&\mbox{if }0\leq i\leq \kx-1\\
 (a^{i-(\kx+1)}\cdot\Hsf_1,\,\,b^{\kx+\ky-i}\cdot\Hsf_2, 0)&\mbox{if }\kx+1\le i\le \kx+\ky.
\end{cases}
\]
\end{notation}

\subsection{Degree Three Shapes and Generator}\label{subsec:degthree}
Adopting the convention that degree three elements will be denoted in lower case mathfrak, consider the shape $(\mathfrak{s}_1,\mathfrak{s}_2,0)\in\scrC_3$ defined as
\[
\begin{tikzpicture}
\node () at (0,0) {$\mathfrak{s}_1=$};
\node (C1) at (3.25,1) {$\scrE_3$};
\node (D1) at (6.25,1) {$\scrE_2$};
\node (E1) at (7.75,1) {$\scrE_1$};
\node (F1) at (9.25,1) {$\scrE_0$};
\draw[->] (E1)--(F1);
\draw[->] (C1)--
node[above]{$\begin{psmallmatrix}v_2 \\ -a^{\kx+1}\\ -1\end{psmallmatrix}$}
(D1);
\draw[->] (D1)--(E1);
\node (B2) at (-1.25,-1) {$\scrE_3$};
\node (C2) at (0.25,-1) {$\scrE_2$};
\node (D2) at (1.75,-1) {$\scrE_1$};
\node (E2) at (3.25,-1) {$\scrE_0$};
\draw[->] (B2)--(C2);
\draw[->] (C2)--(D2);
\draw[->] (D2)--(E2);
\draw[->](C1)--node[left]{$\scriptstyle -1$}(E2);
\end{tikzpicture}
\]
with $\mathfrak{s}_2$ being given by the same vertical matrix, but considered on the other chart.

\begin{notation}\label{def:s}
Set $\upxi= (a^{\kx}\cdot \mathfrak{s}_{1}, b^{\ky}\cdot\mathfrak{s}_{2},0)\in\scrC_{3}$.
\end{notation}
It is easy to verify that $\dd(\upxi)=0$, and it is clear that $\upxi$ is a basis element for $\mathrm{H}^3(\scrC)\cong\mathbb{K}$.


\subsection{Elementary Relationships Involving \texorpdfstring{$\dd$}{D}}\label{sec:dd}
This subsection establishes some elementary relationships between the degree one elements in \S\ref{subsec:degone},  the degree two elements in \S\ref{subsec:degtwo}, and the degree three elements in \S\ref{subsec:degthree} under the differential $\dd$.   

\begin{lemma}\label{lem:DztoZhgeneral}
For any $f\in\scrO_{\scrU_1}$ and $g\in\scrO_{\scrU_2}$,
\begin{align*}
\dd(f\cdot\gsf_1,\, g\cdot\gsf_2,\,0)&= (\phantom{a^{\kx+1}}f\cdot \Zsf_1,\,\, b^{\ky+1}g\cdot \Zsf_2,\,\,(f-a^{\kx-2}g)\cdot \gsf_1|_{\scrU_{12}})\\
\dd(f\cdot\hsf_1,\, g\cdot\hsf_2,\,0)&= (a^{\kx+1}f\cdot \Zsf_1,\,\, \phantom{b^{\ky-1}}g\cdot \Zsf_2,\,\,(f-a^{\ky-2}g)\cdot \hsf_1|_{\scrU_{12}})\\
\dd(f\cdot\zsf_1,\, g\cdot\zsf_2,\,0)&= (\,\,\phantom{a}v_2f\cdot \Zsf_1,\,\,\,\, \phantom{b}w_2g\cdot \Zsf_2,\,\,(f-a^{\kx+\ky-2}g)\cdot \zsf_1|_{\scrU_{12}}).
\end{align*}
\end{lemma}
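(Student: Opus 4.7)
The plan is to unpack the differential $\dd$ from \eqref{eqn:defdd} and carry out three small matrix computations, one per shape. Since the third slot $\asf_{12}$ vanishes in all three inputs, the formula reduces to
\[
\dd(\asf_1,\asf_2,0) = \bigl(\updelta\asf_1,\,\updelta\asf_2,\,\asf_{1|\scrU_{12}} - \asf_{2|\scrU_{12}}\bigr).
\]
As $\updelta$ is $\scrO$-linear (because $f$ and $g$ act by scalar multiplication on both source and target), the first two slots become $f\cdot\updelta(\gsf_1)$ and $g\cdot\updelta(\gsf_2)$, and analogously for $\hsf$ and $\zsf$. The core task therefore reduces to establishing the six identities
\[
\updelta\gsf_1=\Zsf_1,\ \updelta\gsf_2=b^{\ky+1}\Zsf_2,\ \updelta\hsf_1=a^{\kx+1}\Zsf_1,\ \updelta\hsf_2=\Zsf_2,\ \updelta\zsf_1=v_2\Zsf_1,\ \updelta\zsf_2=w_2\Zsf_2.
\]

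Each identity follows by direct matrix multiplication via \eqref{eqn:updelta}, which for $i=1$ reads $(\updelta\asf)_n=\dsf_{n-1}\circ\asf_n+\asf_{n-1}\circ\dsf_n$. Only the components at $n=2,3$ contribute, since every shape is zero at the $\scrE_3\to\scrE_2$ position. At $n=3$ only the across-down term survives, and at $n=2$ the two summands cancel everywhere except at a single nonzero entry. The scaling factors out front arise immediately from multiplying the $\scrU_1$- and $\scrU_2$-matrix expressions of $\dsf_3$, namely $(v_2,-a^{\kx+1},-1)^{T}$ and $(w_2,-1,-b^{\ky+1})^{T}$ from \ref{thm: locallyfreeres}, against the degree-one shape entries listed in \S\ref{subsec:shapes}.

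For the third slot, the subtle point is that although $\gsf_1$ and $\gsf_2$ are given by the same $3\times 3$ matrices, they represent maps between line bundles with genuinely different twists. Inspecting the two nonzero entries of $\gsf$ against \ref{thm: locallyfreeres} shows that each maps a summand of twist $m$ to one of twist $n$ with $n-m=\kx-2$; for $\hsf$ the common discrepancy is $\ky-2$, and for $\zsf$ it is $\kx+\ky-2$. By the gluing convention of \S\ref{sec:sheavesnbd}, under which a section of $\scrO(n)$ equal to $1$ on $\scrU_2$ corresponds on $\scrU_{12}$ to $a^n$ in the $\scrU_1$-frame, a matrix entry $\alpha$ written on $\scrU_2$ is transported to $a^{n-m}\alpha$ in $\scrU_1$-coordinates. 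Applied to each shape, this yields $g\cdot\gsf_{2|\scrU_{12}}=a^{\kx-2}g\cdot\gsf_{1|\scrU_{12}}$, and similarly $a^{\ky-2}g\cdot\hsf_{1|\scrU_{12}}$, $a^{\kx+\ky-2}g\cdot\zsf_{1|\scrU_{12}}$, which delivers the claimed coefficients $f-a^{\kx-2}g$, $f-a^{\ky-2}g$, $f-a^{\kx+\ky-2}g$.

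The main obstacle is purely notational bookkeeping: reconciling the sign in $\updelta$ for odd $i$ with the direction of the gluing isomorphism, and verifying that all nonzero entries of each shape carry the \emph{same} twist discrepancy (so that the conclusion can be stated as a scalar multiple of the shape). Once these are checked, no further ideas are required and the three displayed formulas follow from the corresponding pairs of matrix products.
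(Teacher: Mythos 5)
Your proposal is correct and follows essentially the same route as the paper: unpack $\dd$ via \eqref{eqn:defdd}, pull the scalars $f,g$ through $\updelta$, verify the six identities $\updelta\gsf_1=\Zsf_1$, $\updelta\gsf_2=b^{\ky+1}\Zsf_2$, $\updelta\hsf_1=a^{\kx+1}\Zsf_1$, $\updelta\hsf_2=\Zsf_2$, $\updelta\zsf_1=v_2\Zsf_1$, $\updelta\zsf_2=w_2\Zsf_2$ by the matrix products against $\dsf_1,\dsf_2,\dsf_3$, and then handle the \v{C}ech term using the gluing data. The only cosmetic difference is that the paper computes the third slot by explicitly conjugating with the diagonal transition matrices $\diag(a^{m_i})$ from \S\ref{sec:sheavesnbd}, whereas you package the same calculation as the observation that every nonzero entry of each shape has the same twist discrepancy ($\kx-2$, $\ky-2$, $\kx+\ky-2$ respectively); the resulting coefficients $f-a^{\kx-2}g$, $f-a^{\ky-2}g$, $f-a^{\kx+\ky-2}g$ agree with the paper's.
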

\begin{proof}
(1) By definition, $\dd(f\cdot\gsf_1,\, g\cdot\gsf_2,\,0)=(\updelta(f\cdot \gsf_1), \updelta(g\cdot\gsf_2), f\cdot\gsf_1|_{12}-g\cdot\gsf_2|_{12})$.  We first claim that $\updelta(\gsf_1)=\Zsf_1$ and $\updelta(\gsf_2)=b^{\ky-1}\cdot\Zsf_2$.
Under the sign convention in \eqref{eqn:updelta}, which for degree one is down-across+across-down, the differential $\updelta(\gsf_1)$ equals
\[
\begin{tikzpicture}
\node (C1) at (4.75,1) {$\scrE_3$};
\node (D1) at (6.25,1) {$\scrE_2$};
\node (E1) at (7.75,1) {$\scrE_1$};
\node (F1) at (9.25,1) {$\scrE_0$};
\draw[->] (E1)--(F1);
\draw[->] (C1)--(D1);
\draw[->] (D1)--(E1);
\node (B2) at (1.75,-1) {$\scrE_3$};
\node (C2) at (3.25,-1) {$\scrE_2$};
\node (D2) at (4.75,-1) {$\scrE_1$};
\node (E2) at (6.25,-1) {$\scrE_0$};
\draw[->] (B2)--(C2);
\draw[->] (C2)--(D2);
\draw[->] (D2)--(E2);
\draw[->](C1)--node[left]{$\begin{psmallmatrix}0&0&0\\0&0&1\\0&0&0\end{psmallmatrix}\begin{psmallmatrix}v_2 \\ -a^{\kx+1}\\ -1\end{psmallmatrix}$}(D2);
\draw[->](D1)--node[right]{$\begin{psmallmatrix}v_1 & v_2& b^{2-\kx}w_1\end{psmallmatrix}\begin{psmallmatrix}0&0&0\\0&0&1\\0&0&0\end{psmallmatrix}+\begin{psmallmatrix}0&0&1\end{psmallmatrix}\begin{psmallmatrix}
a^{\kx+1}&v_2&0\\
v_2^{\ordP -2}\!A&-v_1&b^{2-\kx}w_1\\
-1&0&-v_2\end{psmallmatrix}
$}(E2);
\end{tikzpicture}
\]
which is $\Zsf_1$.  Similarly $\updelta(\gsf_2)$ equals
\[
\begin{tikzpicture}
\node (C1) at (4.75,1) {$\scrE_3$};
\node (D1) at (6.25,1) {$\scrE_2$};
\node (E1) at (7.75,1) {$\scrE_1$};
\node (F1) at (9.25,1) {$\scrE_0$};
\draw[->] (E1)--(F1);
\draw[->] (C1)--(D1);
\draw[->] (D1)--(E1);
\node (B2) at (1.75,-1) {$\scrE_3$};
\node (C2) at (3.25,-1) {$\scrE_2$};
\node (D2) at (4.75,-1) {$\scrE_1$};
\node (E2) at (6.25,-1) {$\scrE_0$};
\draw[->] (B2)--(C2);
\draw[->] (C2)--(D2);
\draw[->] (D2)--(E2);
\draw[->](C1)--node[left]{$\begin{psmallmatrix}0&0&0\\0&0&1\\0&0&0\end{psmallmatrix}\begin{psmallmatrix}w_2 \\ -1\\ -b^{\ky+1}\end{psmallmatrix}$}(D2);
\draw[->](D1)--node[right]{$\begin{psmallmatrix}a^{2-\ky}v_1 &w_2& w_1\end{psmallmatrix}\begin{psmallmatrix}0&0&0\\0&0&1\\0&0&0\end{psmallmatrix}+\begin{psmallmatrix}0&0&1\end{psmallmatrix}\begin{psmallmatrix}
1&w_2&0\\
w_2^{\ordP-2}\!B&-a^{2-\ky}v_1&w_1\\
-b^{\ky+1}&0&-w_2\end{psmallmatrix}$}(E2);
\end{tikzpicture}
\]
which is $b^{\ky+1}\cdot\Zsf_2$.  To calculate the final entry $f\cdot\gsf_1|_{12}-g\cdot\gsf_2|_{12}$, we work on the $\scrU_1$ coordinates on $\scrU_{12}$. The first non-zero map in $f\cdot\gsf_1|_{12}-g\cdot\gsf_2|_{12}$ is given by
\[
\begin{tikzpicture}
\node (A1) at (1,1) {$\scrE_2$};
\node (B1) at (1,-1) {$\scrE_1$};

\draw[->](A1)--node[left]{$\begin{psmallmatrix}0&0&0\\0&0&f\\0&0&0\end{psmallmatrix}$}(B1);

\node () at (2,0) {$-$};
\node (C1) at (3,1) {$\scrE_2$};
\node (D1) at (6.5,1) {$\scrE_2$};
\draw[->] (C1)--
node[above]{$\begin{psmallmatrix}a^{\kx+\ky-1}&0&0\\0&a^{\ky-1}&0\\0&0&a^{\kx-1}\end{psmallmatrix}$}
(D1);
\node (D2) at (3,-1) {$\scrE_1$};
\node (E2) at (6.5,-1) {$\scrE_1$};
\draw[<-] (D2)--
node[above]{$\scriptstyle\left(\begin{smallmatrix}a^{2-\ky}&0&0\\0&a^{-1}&0\\0&0&a^{2-\kx}\end{smallmatrix}\right)$}(E2);
\draw[->](D1)--node[right]{$\begin{psmallmatrix}0&0&0\\0&0&g\\0&0&0\end{psmallmatrix}$}(E2);

\node  at (8.5,0) {$=$};

\node (A5) at (12,1) {$\scrE_2$};
\node (B5) at (12,-1) {$\scrE_1$};

\draw[->](A5)--node[left]{${\scriptstyle (f-a^{\kx-2}g)}\begin{psmallmatrix}0&0&0\\0&0&1\\0&0&0\end{psmallmatrix}$}(B5);

\end{tikzpicture}
\]
and the second possible non-zero map is given by 
\[
\begin{tikzpicture}
\node (A1) at (1,1) {$\scrE_1$};
\node (B1) at (1,-1) {$\scrE_0$};

\draw[->](A1)--node[left]{$\begin{psmallmatrix}0&0&f\end{psmallmatrix}$}(B1);

\node () at (2,0) {$-$};
\node (C1) at (3,1) {$\scrE_1$};
\node (D1) at (6.5,1) {$\scrE_1$};
\draw[->] (C1)--
node[above]{$\begin{psmallmatrix}a^{\ky-2}&0&0\\0&a^{1}&0\\0&0&a^{\kx-2}\end{psmallmatrix}$}
(D1);
\node (D2) at (3,-1) {$\scrE_0$};
\node (E2) at (6.5,-1) {$\scrE_0$};
\draw[<-] (D2)--
node[above]{$\scriptstyle 1$}(E2);
\draw[->](D1)--node[right]{$\begin{psmallmatrix}0&0&g\end{psmallmatrix}$}(E2);

\node  at (8.5,0) {$=$};

\node (A5) at (12,1) {$\scrE_1$};
\node (B5) at (12,-1) {$\scrE_0$};

\draw[->](A5)--node[left]{${\scriptstyle (f-a^{\kx-2}g)}\begin{psmallmatrix}0&0&1\end{psmallmatrix}$}(B5);

\end{tikzpicture}
\]
and so $f\cdot\gsf_1|_{12}-g\cdot\gsf_2|_{12}=(f-a^{\kx-2}g)\cdot \gsf_1|_{\scrU_{12}}$ as claimed.\\
\noindent
(2) Again by direct calculation as above, $\updelta(\hsf_1)=a^{\kx+1}\cdot\Zsf_1$ and $\updelta(\hsf_2)=\Zsf_2$. The calculation that $f\cdot\hsf_1|_{12}-g\cdot\hsf_2|_{12}=(f-a^{\ky-2}g)\cdot \hsf_1|_{\scrU_{12}}$ is also similar.\\
\noindent
(3) Here, again by direct calculation, $\updelta(\zsf_1)=v_2\cdot\Zsf_1$ and $\updelta(\zsf_2)=w_2\cdot\Zsf_2$. 
Now the only possible non-zero map in $f\cdot\zsf_1|_{12}-g\cdot\zsf_2|_{12}$ is the map $\scrE_2\to\scrE_1$ given by
\[
\begin{tikzpicture}
\node (A1) at (1,1) {$\scrE_2$};
\node (B1) at (1,-1) {$\scrE_1$};

\draw[->](A1)--node[left]{$\begin{psmallmatrix}0&0&0\\-f&0&0\\0&0&0\end{psmallmatrix}$}(B1);

\node () at (2,0) {$-$};
\node (C1) at (3,1) {$\scrE_2$};
\node (D1) at (6.5,1) {$\scrE_2$};
\draw[->] (C1)--
node[above]{$\begin{psmallmatrix}a^{\kx+\ky-1}&0&0\\0&a^{\ky-1}&0\\0&0&a^{\kx-1}\end{psmallmatrix}$}
(D1);
\node (D2) at (3,-1) {$\scrE_1$};
\node (E2) at (6.5,-1) {$\scrE_1$};
\draw[<-] (D2)--
node[above]{$\scriptstyle\left(\begin{smallmatrix}a^{2-\ky}&0&0\\0&a^{-1}&0\\0&0&a^{2-\kx}\end{smallmatrix}\right)$}(E2);
\draw[->](D1)--node[right]{$\begin{psmallmatrix}0&0&0\\-g&0&0\\0&0&0\end{psmallmatrix}$}(E2);
\end{tikzpicture}
\]
which is clearly $(f-a^{\kx+\ky-2}g)\cdot \zsf_1|_{\scrU_{12}}$.
\end{proof}

The following asserts that, although $\ksf_i$ is defined on two different intervals, the effect of applying $\dd$ always looks the same.

\begin{cor}\label{lem:effectPsiPhi}
For all $i$ such that $\ksf_i$ is defined, 
\[
\dd(\ksf_i)=(a^i\cdot \Zsf_{1}, \,\,b^{\kx+\ky-1-i}\cdot\Zsf_{2}, 0).
\]
\end{cor}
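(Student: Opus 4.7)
The plan is to apply Lemma~\ref{lem:DztoZhgeneral} directly, splitting into the two cases by which $\ksf_i$ is defined in Notation~\ref{not:definitionk}. In each case, we just need to substitute the prescribed coefficients $f$ and $g$ into the appropriate formula and check that the third (\v{C}ech) component vanishes on $\scrU_{12}$. The only real content is the identity $a = b^{-1}$ on $\scrU_{12}$, which forces the difference $f - a^{\kx-2}g$ (respectively $f - a^{\ky-2}g$) to be zero. There is no genuine obstacle; the work has already been done in Lemma~\ref{lem:DztoZhgeneral}.

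More concretely, in the first range $0 \leq i \leq \kx-2$, we have $\ksf_i = (a^i \cdot \gsf_1,\, b^{\kx-2-i}\cdot \gsf_2,\,0)$, so applying the first formula of Lemma~\ref{lem:DztoZhgeneral} with $f = a^i$ and $g = b^{\kx-2-i}$ yields
\[
\dd(\ksf_i) = \bigl(a^i \cdot \Zsf_1,\,\, b^{\ky+1} b^{\kx-2-i} \cdot \Zsf_2,\,\, (a^i - a^{\kx-2} b^{\kx-2-i}) \cdot \gsf_1|_{\scrU_{12}}\bigr).
\]
The middle entry simplifies to $b^{\kx+\ky-1-i} \cdot \Zsf_2$, and on $\scrU_{12}$ we have $a^{\kx-2} b^{\kx-2-i} = a^{\kx-2} a^{i-\kx+2} = a^i$, so the third entry vanishes.

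In the second range $\kx+1 \leq i \leq \kx+\ky-1$, we have $\ksf_i = (a^{i-\kx-1}\cdot \hsf_1,\, b^{\kx+\ky-1-i}\cdot \hsf_2,\, 0)$, so applying the second formula of Lemma~\ref{lem:DztoZhgeneral} with $f = a^{i-\kx-1}$ and $g = b^{\kx+\ky-1-i}$ yields
\[
\dd(\ksf_i) = \bigl(a^{\kx+1} a^{i-\kx-1} \cdot \Zsf_1,\,\, b^{\kx+\ky-1-i} \cdot \Zsf_2,\,\, (a^{i-\kx-1} - a^{\ky-2} b^{\kx+\ky-1-i}) \cdot \hsf_1|_{\scrU_{12}}\bigr).
\]
The first entry is $a^i \cdot \Zsf_1$, and on $\scrU_{12}$ the expression $a^{\ky-2} b^{\kx+\ky-1-i}$ becomes $a^{\ky-2 - \kx - \ky + 1 + i} = a^{i-\kx-1}$, so once again the third entry vanishes. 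Both cases thus agree with the claimed formula, completing the proof.
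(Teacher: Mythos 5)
Your proposal is correct and follows exactly the paper's own argument: split into the two defining intervals of $\ksf_i$ and apply the corresponding line of \ref{lem:DztoZhgeneral} with $f$ and $g$ the prescribed powers of $a$ and $b$, using $a=b^{-1}$ on $\scrU_{12}$ to kill the third component. Your version merely spells out the exponent arithmetic that the paper leaves implicit.
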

\begin{proof}
In the first interval $\ksf_i=(a^i\cdot\gsf_1,b^{\kx-2-i}\cdot \gsf_2,0)$, so the statement follows from the top line in \ref{lem:DztoZhgeneral} applied to $f=a^i$ and $g=b^{\kx-2-i}$, where $f-a^{\kx-2}g=0$ since $a=b^{-1}$.  Similarly, in the second interval $\ksf_i=(a^{i-(\kx+1)}\cdot\hsf_1,\,\,b^{(\kx+\ky-1)-i}\cdot\hsf_2,0)$ so the statement follows from the second line in \ref{lem:DztoZhgeneral}.
\end{proof}
As a consequence of \ref{lem:effectPsiPhi}, every element in the sequence
\[
(a^0\cdot \Zsf_{1}, b^{\kx+\ky-1}\cdot\Zsf_{2}, 0),\hdots,
(a^{\kx-2}\cdot \Zsf_{1}, b^{\ky+1}\cdot\Zsf_{2}, 0),
(a^{\kx+1}\cdot \Zsf_{1}, b^{\ky-2}\cdot\Zsf_{2}, 0),\hdots,
(a^{\kx+\ky-1}\cdot \Zsf_{1}, b^0\cdot\Zsf_{2}, 0)
\]
belongs to the image of $\dd$.   In contrast, by \ref{lem:Ext2linInd} the two `missing' elements in this sequence, namely $\Ysf=(a^{\kx-1}\cdot \Zsf_{1}, b^{\ky}\cdot\Zsf_{2}, 0)$ and $\Xsf=(a^{\kx}\cdot \Zsf_{1}, b^{\ky-1}\cdot\Zsf_{2}, 0)$, do not lie in the image of $\dd$.  The following corollary formalises this fact.

\begin{cor}\label{cor:Aintosum}
Given $f(a)=\upalpha_0+\upalpha_1a+\hdots+\upalpha_{\kx+\ky-1} a^{\kx+\ky-1}\in\mathbb{K}[a]$, set $g=b^{\kx+\ky-1}f(b^{-1})$, namely $g=\upalpha_0b^{\kx+\ky-1}+\upalpha_1b^{\kx+\ky-2}+\hdots+\upalpha_{\kx+\ky-1}$.  Then in $\scrC$
\[
(f\cdot\Zsf_1, \,g\cdot\Zsf_2,0)=\sum_{i=0}^{\kx-2}\upalpha_i\dd(\ksf_i)+\upalpha_{\kx-1}\Ysf+\upalpha_{\kx}\Xsf+\sum_{i=\kx+1}^{\kx+\ky-1}\upalpha_i\dd(\ksf_i).
\]
\end{cor}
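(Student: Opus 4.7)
The plan is to prove this by direct linear expansion, using \ref{lem:effectPsiPhi} for the indices where $\ksf_i$ is defined and the definitions of $\Xsf$ and $\Ysf$ (from \ref{def:XY}) to supply the two missing exponents.

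First I would expand the left hand side by linearity in $\scrC$. By construction
\[
(f\cdot\Zsf_1,\,g\cdot\Zsf_2,\,0)=\sum_{i=0}^{\kx+\ky-1}\upalpha_i\,(a^i\cdot\Zsf_1,\,b^{\kx+\ky-1-i}\cdot\Zsf_2,\,0),
\]
since the $a$-exponent in $f$ matches the $b$-exponent in $g$ under the substitution $b=a^{-1}$ used to define $g$. Thus the problem reduces to identifying each summand $(a^i\cdot\Zsf_1,\,b^{\kx+\ky-1-i}\cdot\Zsf_2,\,0)$ with the corresponding term on the right hand side, index by index.

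Next, for $0\le i\le \kx-2$ and for $\kx+1\le i\le \kx+\ky-1$, the element $\ksf_i$ is defined in \ref{not:definitionk}, and by \ref{lem:effectPsiPhi}
\[
\dd(\ksf_i)=(a^i\cdot\Zsf_1,\,b^{\kx+\ky-1-i}\cdot\Zsf_2,\,0).
\]
This accounts for all summands except those corresponding to $i=\kx-1$ and $i=\kx$. For those two remaining indices, I would simply quote the definitions: $\Ysf=(a^{\kx-1}\cdot\Zsf_1,\,b^{\ky}\cdot\Zsf_2,\,0)$ and $\Xsf=(a^{\kx}\cdot\Zsf_1,\,b^{\ky-1}\cdot\Zsf_2,\,0)$ from \ref{def:XY}, which match the $i=\kx-1$ and $i=\kx$ summands of the expansion.

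There is no real obstacle here: the statement is essentially a bookkeeping corollary that repackages \ref{lem:effectPsiPhi} together with \ref{def:XY} into a single identity. The only small check is confirming that the exponent matching $g=b^{\kx+\ky-1}f(b^{-1})$ is exactly what is needed so that the $i$th summand on the left hand side has the form $(a^i\cdot\Zsf_1,\,b^{\kx+\ky-1-i}\cdot\Zsf_2,\,0)$, but this is immediate from the definition of $g$.
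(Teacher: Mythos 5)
Your proposal is correct and follows exactly the paper's argument: expand $(f\cdot\Zsf_1,\,g\cdot\Zsf_2,0)$ by linearity into the summands $(a^i\cdot\Zsf_1,\,b^{\kx+\ky-1-i}\cdot\Zsf_2,0)$, apply \ref{lem:effectPsiPhi} for the indices where $\ksf_i$ is defined, and identify the two remaining summands with $\Ysf$ and $\Xsf$ via \ref{def:XY}. Nothing to add.
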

\begin{proof}
By construction of $g$, we may write
\[
(f\cdot\Zsf_1, \,g\cdot\Zsf_2,0)=\sum_{i=0}^{\kx+\ky-1} \upalpha_i(a^{i}\cdot\Zsf_1,\, b^{\kx+\ky-1-i}\cdot\Zsf_2,0).
\]
The result then follows directly from \ref{lem:effectPsiPhi}, and the definition of $\Xsf$ and $\Ysf$.
\end{proof}

The next elementary relationship is an analogue of \ref{lem:DztoZhgeneral} under $\dd\colon\scrC_2\rightarrow\scrC_3$.
\begin{lemma}\label{lem:effectGHZ}
For any $f\in\scrO_{\scrU_1}$ and $g\in\scrO_{\scrU_2}$, 
\begin{align*}
\dd(f\cdot\Gsf_1,\, g\cdot\Gsf_2,\,0)&= (\phantom{a^{\kx+1}}f\cdot  \mathfrak{s}_1,\,\, b^{\ky+1}g\cdot  \mathfrak{s}_2,\,\,(f-a^{\kx-1}g)\cdot  \Gsf_1|_{\scrU_{12}})\\
\dd(f\cdot\Hsf_1,\, g\cdot\Hsf_2,\,0)&= (a^{\kx+1}f\cdot  \mathfrak{s}_1,\,\, \phantom{b^{\ky-1}}g\cdot  \mathfrak{s}_2,\,\,(f-a^{\ky-1}g)\cdot  \Hsf_1|_{\scrU_{12}})
\end{align*}
\end{lemma}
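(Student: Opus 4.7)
The plan is to mirror the argument of Lemma~\ref{lem:DztoZhgeneral} almost verbatim, one degree up.  Applying the definition of $\dd$ in \eqref{eqn:defdd} to an input whose third slot is zero reduces the lemma to three separate calculations: the two chartwise differentials $\updelta(f\cdot\Gsf_1)$ and $\updelta(g\cdot\Gsf_2)$, and the \v{C}ech difference $f\cdot\Gsf_1|_{\scrU_{12}}-g\cdot\Gsf_2|_{\scrU_{12}}$, with the analogous reduction for $\Hsf$.

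First I would compute $\updelta(\Gsf_1)$ and $\updelta(\Gsf_2)$ directly from \eqref{eqn:updelta}.  Since the only nonzero vertical component of $\Gsf$ is the $\scrE_2\to\scrE_0$ matrix $(0,0,-1)$, the only component of the degree-three element $\updelta(\Gsf_i)$ that can be nonzero is the one sending $\scrE_3\to\scrE_0$; by the even-degree sign convention this equals $-(0,0,-1)\circ\dsf_3$, and multiplying against $\dsf_3$ on each chart yields $\updelta(\Gsf_1)=\mathfrak{s}_1$ and $\updelta(\Gsf_2)=b^{\ky+1}\cdot\mathfrak{s}_2$.  Using $\scrO_{\scrU_i}$-linearity of $\updelta$ on each chart, multiplication by $f$ and $g$ then yields the first two coordinates of the claim.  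The $\Hsf$ case is identical, starting from the nonzero vertical $(0,-1,0)$ and producing $\updelta(\Hsf_1)=a^{\kx+1}\cdot\mathfrak{s}_1$ and $\updelta(\Hsf_2)=\mathfrak{s}_2$.

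Second I would compute the \v{C}ech difference in the $\scrU_1$-coordinates induced on $\scrU_{12}$.  For $\Gsf$, the nonzero entry of $(0,0,-1)\colon\scrE_2\to\scrE_0$ sits at position $(1,3)$, whose source is the summand $\scrO(1-\kx)$ of $\scrE_2$ and whose target is $\scrO$; by the transition formula \eqref{eqn:glueconvention} the $\scrU_2$-side entry, when expressed in $\scrU_1$-coordinates, acquires a factor of $a^{0-(1-\kx)}=a^{\kx-1}$.  Taking the signed difference yields exactly $(f-a^{\kx-1}g)\cdot\Gsf_1|_{\scrU_{12}}$.  The same bookkeeping applied to $\Hsf$, whose nonzero entry lies at $(1,2)$ with source $\scrO(1-\ky)$, produces the factor $a^{\ky-1}$.

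The only real subtlety here is remembering that $\scrE_2$ carries the twists $1-\kx-\ky,\,1-\ky,\,1-\kx$, which differ from those of $\scrE_1$; this is precisely why the exponent appearing here is $\kx-1$ rather than the $\kx-2$ that occurred in Lemma~\ref{lem:DztoZhgeneral}.  Beyond this bookkeeping and the sign change for even-degree inputs, no substantial obstacle arises.
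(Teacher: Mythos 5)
Your proposal is correct and follows essentially the same route as the paper: unpack $\dd$ via \eqref{eqn:defdd}, compute $\updelta(\Gsf_i)$ and $\updelta(\Hsf_i)$ chartwise from the single nonzero component composed with $\dsf_3$ (giving $\mathfrak{s}_1$, $b^{\ky+1}\mathfrak{s}_2$, $a^{\kx+1}\mathfrak{s}_1$, $\mathfrak{s}_2$), and then evaluate the \v{C}ech difference on $\scrU_{12}$ using the transition twists of $\scrE_2$, which is exactly where the exponents $\kx-1$ and $\ky-1$ arise. Your remark contrasting these exponents with the $\kx-2$ of \ref{lem:DztoZhgeneral} correctly identifies the only bookkeeping subtlety.
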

\begin{proof}
By definition, $\dd(f\cdot\Gsf_1,\, g\cdot\Gsf_2,\,0)=(\updelta(f\cdot \Gsf_1), \updelta(g\cdot\Gsf_2), f\cdot\Gsf_1|_{12}-g\cdot\Gsf_2|_{12})$.  Under the sign convention in \eqref{eqn:updelta}, which for degree two is down-across minus across-down, it is clear that $\updelta(\Gsf_1)=\mathfrak{s}_1$ and $\updelta(\Gsf_2)=b^{\ky+1}\mathfrak{s}_2$.  Now the only possible non-zero map in $ f\cdot\Gsf_1|_{12}-g\cdot\Gsf_2|_{12}$ is the map $\scrE_2\to\scrE_0$ given by
\[
\begin{tikzpicture}
\node (A1) at (1,1) {$\scrE_2$};
\node (B1) at (1,-1) {$\scrE_0$};

\draw[->](A1)--node[left]{$\begin{psmallmatrix}0&0&-f\end{psmallmatrix}$}(B1);

\node () at (2,0) {$-$};
\node (C1) at (3,1) {$\scrE_2$};
\node (D1) at (6.5,1) {$\scrE_2$};
\draw[->] (C1)--
node[above]{$\begin{psmallmatrix}a^{\kx+\ky-1}&0&0\\0&a^{\ky-1}&0\\0&0&a^{\kx-1}\end{psmallmatrix}$}
(D1);
\node (D2) at (3,-1) {$\scrE_0$};
\node (E2) at (6.5,-1) {$\scrE_0$};
\draw[<-] (D2)--
node[above]{$\scriptstyle 1$}(E2);
\draw[->](D1)--node[right]{$\begin{psmallmatrix}0&0&-g\end{psmallmatrix}$}(E2);
\end{tikzpicture}
\]
which is clearly $(f-a^{\kx-1}g)\cdot \Gsf_1|_{\scrU_{12}}$. This proves the first statement. In a similar way, $\updelta(\Hsf_1)=a^{\kx+1}\mathfrak{s}_1$, $\updelta(\Hsf_2)=\mathfrak{s}_2$ and $ f\cdot\Hsf_1|_{12}-g\cdot\Hsf_2|_{12}=(f-a^{\ky-1}g)\cdot \Hsf_1|_{\scrU_{12}}$. 
\end{proof}

As in \ref{lem:effectPsiPhi}, the following asserts that although $\Ksf_i$ is defined on two different intervals, the effect of applying $\dd$ always looks the same.

\begin{cor}\label{lem:effectAlphaBeta}
For all $i$ such that $\Ksf_i$ is defined, 
\[
\dd(\Ksf_i)=(a^i\cdot \mathfrak{s}_{1}, \,\,b^{\kx+\ky-i}\cdot\mathfrak{s}_{2}, 0).
\]
\end{cor}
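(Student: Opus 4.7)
The plan is to apply Lemma \ref{lem:effectGHZ} directly in each of the two intervals on which $\Ksf_i$ is defined, and check that in both cases the third \v{C}ech component vanishes because $a=b^{-1}$ on $\scrU_{12}$.

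First, for $0\le i\le \kx-1$, the definition gives $\Ksf_i=(a^i\cdot\Gsf_1,\,b^{\kx-1-i}\cdot\Gsf_2,0)$. I would apply the first identity of \ref{lem:effectGHZ} with $f=a^i$ and $g=b^{\kx-1-i}$. The first component becomes $a^i\cdot\mathfrak{s}_1$ as required; the second component is $b^{\ky+1}\cdot b^{\kx-1-i}\cdot\mathfrak{s}_2=b^{\kx+\ky-i}\cdot\mathfrak{s}_2$, again as required; and the third component is $(a^i-a^{\kx-1}\cdot b^{\kx-1-i})\cdot\Gsf_1|_{\scrU_{12}}$, which vanishes since $a^{\kx-1}b^{\kx-1-i}=a^{(\kx-1)-(\kx-1-i)}=a^i$ on $\scrU_{12}$.

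Second, for $\kx+1\le i\le \kx+\ky$, the definition gives $\Ksf_i=(a^{i-(\kx+1)}\cdot\Hsf_1,\,b^{\kx+\ky-i}\cdot\Hsf_2,0)$. Here I would apply the second identity of \ref{lem:effectGHZ} with $f=a^{i-\kx-1}$ and $g=b^{\kx+\ky-i}$. The first component is $a^{\kx+1}\cdot a^{i-\kx-1}\cdot\mathfrak{s}_1=a^i\cdot\mathfrak{s}_1$, the second is $b^{\kx+\ky-i}\cdot\mathfrak{s}_2$, and the \v{C}ech component is $(a^{i-\kx-1}-a^{\ky-1}\cdot b^{\kx+\ky-i})\cdot\Hsf_1|_{\scrU_{12}}$, which again vanishes on $\scrU_{12}$ since $a^{\ky-1}b^{\kx+\ky-i}=a^{(\ky-1)-(\kx+\ky-i)}=a^{i-\kx-1}$.

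There is no substantive obstacle: once \ref{lem:effectGHZ} is in hand, the corollary is a bookkeeping exercise, and the only thing to notice is that the exponents on $a$ and $b$ are chosen precisely so that the difference appearing in the third \v{C}ech slot collapses to zero on the overlap. In spirit this mirrors \ref{lem:effectPsiPhi} for the degree-one generators, the only difference being the shift in exponent ranges coming from the degree shift (the intervals $[0,\kx-1]\cup[\kx+1,\kx+\ky]$ replace $[0,\kx-2]\cup[\kx+1,\kx+\ky-1]$), which is already baked into Notation \ref{def:Ki}.
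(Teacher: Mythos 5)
Your proposal is correct and follows essentially the same route as the paper: apply the two cases of \ref{lem:effectGHZ} with $f=a^i$, $g=b^{\kx-1-i}$ (resp.\ $f=a^{i-\kx-1}$, $g=b^{\kx+\ky-i}$) and observe that the third \v{C}ech component vanishes because $a=b^{-1}$ on $\scrU_{12}$. The exponent bookkeeping you carry out matches the paper's argument exactly.
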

\begin{proof}
In the first interval $\Ksf_i=(a^i\cdot\Gsf_1,\,\,b^{\kx-1-i}\cdot \Gsf_2,0)$, so the statement follows from the top line in \ref{lem:effectGHZ} applied to $f=a^i$ and $g=b^{\kx-1-i}$, where $f-a^{\kx-1}g=0$ since $a=b^{-1}$.  Similarly, in the second interval, $\Ksf_i= (a^{i-(\kx+1)}\cdot\Hsf_1,\,\,b^{\kx+\ky-i}\cdot\Hsf_2, 0)$ so the statement follows from the second line in \ref{lem:effectGHZ}.
\end{proof}

\subsection{Elementary Relationships Involving \texorpdfstring{$\star$}{star}}\label{sec:star}
This subsection establishes some elementary relationships between the degree one elements in \S\ref{subsec:degone}, and the degree two elements in \S\ref{subsec:degtwo}, under the operation $\star$.

\begin{lemma}\label{lem:compdegone}
In $\scrC$, the following statements hold.
\begin{align*}
\xsf\star \xsf&=(\,\phantom{a^2}v_2^{\ordP-3}A\cdot\Zsf_1, \,\,b^2w_2^{\ordP-3}B\cdot\Zsf_2,\,0)\\
\xsf\star \ysf=\ysf\star \xsf&=(\,\phantom{{}^2}av_2^{\ordP-3}A\cdot\Zsf_1,\,\,\phantom{{}^2}bw_2^{\ordP-3}B\cdot\Zsf_2,\,0)\\
\ysf\star \ysf&=(\,a^2v_2^{\ordP-3}A\cdot\Zsf_1,\,\,\phantom{b^2}w_2^{\ordP-3}B\cdot\Zsf_2,\,0)
\end{align*}
\end{lemma}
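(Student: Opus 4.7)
The plan is direct calculation. By \eqref{eqn:star}, for any degree-$1$ elements $\asf,\bsf\in\scrC_1$ with vanishing $\vC^1$-component,
\[
\asf\star\bsf = (\asf_1\circ\bsf_1,\ \asf_2\circ\bsf_2,\ 0),
\]
so for each of the four products in the statement the $\vC^1$-component is automatically zero, and only the compositions on the two affine charts need to be computed. Since each such composition is a degree-$2$ endomorphism of the length-$3$ complex $\scrE$, only the components $\scrE_3\to\scrE_1$ and $\scrE_2\to\scrE_0$ can be nonzero, and each is a single $3\times 3$-by-$3\times 1$ or $1\times 3$-by-$3\times 3$ matrix product.

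On $\scrU_1$, the $\scrE_3\to\scrE_1$ component of $\xsf_1\circ\xsf_1$ is
\[
\begin{psmallmatrix}0&-1&0\\v_2^{\ordP-3}A&0&0\\0&0&1\end{psmallmatrix}\begin{psmallmatrix}-1\\0\\0\end{psmallmatrix}=\begin{psmallmatrix}0\\-v_2^{\ordP-3}A\\0\end{psmallmatrix},
\]
and the $\scrE_2\to\scrE_0$ component is
\[
\begin{psmallmatrix}0&-1&0\end{psmallmatrix}\begin{psmallmatrix}0&-1&0\\v_2^{\ordP-3}A&0&0\\0&0&1\end{psmallmatrix}=\begin{psmallmatrix}-v_2^{\ordP-3}A&0&0\end{psmallmatrix}.
\]
Comparing with the definition of $\Zsf_1$ in \S\ref{subsec:degtwo} shows that these are precisely the two nonzero components of $v_2^{\ordP-3}A\cdot\Zsf_1$, giving the $\scrU_1$-entry of the first formula. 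On $\scrU_2$, the corresponding matrices in $\xsf_2$ differ from those in $\xsf_1$ only by replacing $v_2^{\ordP-3}A$ with $w_2^{\ordP-3}B$ and by an overall multiplication of each of the three vertical maps by $b$; the extra two factors of $b$ together with the symbol swap produce $b^{2}w_2^{\ordP-3}B\cdot\Zsf_2$, as claimed.

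For the remaining three products I will use the defining relations $\ysf_1=a\cdot\xsf_1$ and $\ysf_2=b^{-1}\cdot\xsf_2$, which hold component by component on the three vertical maps. Since composition is bilinear, each replacement of an $\xsf$ by a $\ysf$ contributes one extra factor of $a$ to the $\scrU_1$-component and removes one factor of $b$ from the $\scrU_2$-component of the result. Hence $\xsf\star\ysf$ and $\ysf\star\xsf$ agree on both charts and are obtained from $\xsf\star\xsf$ by multiplying the $\scrU_1$-entry by $a$ and the $\scrU_2$-entry by $b^{-1}$, while $\ysf\star\ysf$ picks up $a^{2}$ and $b^{-2}$ respectively, yielding the middle and bottom formulas.

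I expect no real obstacle: once the degree shifts are tracked correctly and the $b$-factor on $\scrU_2$ is accounted for, the entire verification reduces to the two matrix products displayed above. The $(-1)^i$ sign in \eqref{eqn:star} never plays a role because both inputs have zero $\vC^1$-component, so the commutativity $\xsf\star\ysf=\ysf\star\xsf$ is a consequence of bilinearity rather than a sign miracle.
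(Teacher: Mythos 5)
Your proposal is correct and follows essentially the same route as the paper: a direct matrix computation of $\xsf_1\circ\xsf_1$ on $\scrU_1$, the analogous computation (with the extra $b$-factors) on $\scrU_2$, and the remaining products obtained from the relations $\ysf_1=a\cdot\xsf_1$, $\ysf_2=b^{-1}\cdot\xsf_2$, with the $\vC^1$-component vanishing trivially since both inputs have zero $\vC^1$-part. The displayed matrix products and the identification with the components of $\Zsf_1$ match the paper's verification.
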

\begin{proof}
Note $\xsf\star\xsf=(\xsf_1\circ\xsf_1,\xsf_2\circ\xsf_2,0)$. Now on $\scrU_1$, the composition $\xsf_1\circ\xsf_1$ is
\[
\begin{tikzpicture}
\node () at (0,0) {};
\node (C1) at (3.25,1) {$\scrE_3$};
\node (D1) at (6.25,1) {$\scrE_2$};
\node (E1) at (7.75,1) {$\scrE_1$};
\node (F1) at (9.25,1) {$\scrE_0$};
\draw[->] (E1)--(F1);
\draw[->] (C1)--
node[above]{$\begin{psmallmatrix}v_2 \\ -a^{\kx+1}\\ -1\end{psmallmatrix}$}
(D1);
\draw[->] (D1)--(E1);
\node (B2) at (0.25,-1) {$\scrE_3$};
\node (C2) at (1.75,-1) {$\scrE_2$};
\node (D2) at (3.25,-1) {$\scrE_1$};
\node (E2) at (6.25,-1) {$\scrE_0$};
\draw[->] (B2)--(C2);
\draw[->] (C2)--(D2);
\draw[->] (D2)--
node[above]{$\begin{psmallmatrix}v_1 & v_2& b^{2-\kx}w_1\end{psmallmatrix}$}(E2);
\draw[->](C1)--node[left]{${\scriptstyle v_2^{\ordP-3}A}\begin{psmallmatrix}0\\-1\\0\end{psmallmatrix}$}(D2);
\draw[->](D1)--node[right]{${\scriptstyle v_2^{\ordP-3}A}\begin{psmallmatrix}-1&0&0\end{psmallmatrix}$}(E2);
\end{tikzpicture}
\]
which equals $v_2^{\ordP-3}A\cdot\Zsf_1$.  On $\scrU_2$, $\xsf_2\circ\xsf_2$ it is similar, with $b^2w_2^{\ordP-3}B$ replacing the coefficient $v_2^{\ordP-3}A$ in the two vertical maps above.  The first statement follows.

The second and third statements follow in an identical fashion.
\end{proof}

\begin{lemma}\label{lem:gstarxyGH}
In $\scrC$, the following hold.
\[
\begin{aligned}
-\gsf_1\circ \xsf_1&=\Gsf_1 & -\gsf_1\circ \ysf_1&=a\cdot\Gsf_1 &-\gsf_2\circ \xsf_2&=b\cdot \Gsf_2 & -\gsf_2\circ \ysf_2&=\Gsf_2\\
\phantom{-}\xsf_1\circ\gsf_1&=\Gsf_1 & \phantom{-}\ysf_1\circ\gsf_1&=a\cdot\Gsf_1&\phantom{-}\xsf_2\circ\gsf_2&=b\cdot \Gsf_2 & \phantom{-}\ysf_2\circ\gsf_2&=\Gsf_2\\
-\hsf_1\circ \xsf_1&=\Hsf_1 &-\hsf_1\circ \ysf_1&=a\cdot\Hsf_1&-\hsf_2\circ \xsf_2&=b\cdot \Hsf_2 & -\hsf_2\circ \ysf_2&=\Hsf_2\\
\phantom{-}\xsf_1\circ\hsf_1&=\Hsf_1 & \phantom{-}\ysf_1\circ\hsf_1&=a\cdot\Hsf_1&\phantom{-}\xsf_2\circ\hsf_2&=b\cdot \Hsf_2 & \phantom{-}\ysf_2\circ\hsf_2&=\Hsf_2
\end{aligned}
\]
\end{lemma}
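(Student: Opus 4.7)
The proof plan is a direct matrix computation: each of the sixteen identities is an entry-wise multiplication of the vertical matrices defining the two factors, and the sparsity of the shapes $\gsf$ and $\hsf$ means that almost all entries in the composed map vanish automatically.

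First I would write out the composition $\gsf_1\circ\xsf_1$ explicitly on $\scrU_1$. This composition has only two potentially nonzero components: the $\scrE_3\to\scrE_1$ component, obtained by applying $\gsf_1$'s middle matrix to $\xsf_1$'s map $\scrE_3\to\scrE_2$, and the $\scrE_2\to\scrE_0$ component, obtained by applying $\gsf_1$'s right map to $\xsf_1$'s middle matrix. Because $\gsf_1$'s middle matrix has a single nonzero entry in the $(2,3)$ position, and $\xsf_1$'s vertical map $\scrE_3\to\scrE_2$ is $(-1,0,0)^T$, the $\scrE_3\to\scrE_1$ component vanishes; the $\scrE_2\to\scrE_0$ component equals $(0,0,1)$. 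Comparing with $\Gsf_1$, whose only nonzero vertical map is $(0,0,-1)\colon\scrE_2\to\scrE_0$, yields $-\gsf_1\circ\xsf_1 = \Gsf_1$. This is the template.

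The remaining fifteen identities follow by the same mechanism. For the four on the first row: the identity $-\gsf_1\circ\ysf_1 = a\cdot\Gsf_1$ is immediate because $\ysf_1 = a\cdot\xsf_1$ by definition; the two identities on $\scrU_2$ are proved exactly as above but using $\xsf_2$'s middle matrix, which carries the explicit overall factor of $b$ (giving the $b\cdot\Gsf_2$ on the $\xsf$ side), while $\ysf_2$ has that factor removed (giving just $\Gsf_2$). The second row repeats the argument with the order swapped: in $\xsf_1\circ\gsf_1$, only the $\scrE_2\to\scrE_0$ component survives, this time equal to $\xsf_{1,1}$ composed with $\gsf_{1,2}$'s single nonzero column, producing $(0,0,-1)=\Gsf_1$ outright, which explains the absence of the minus sign on the second and fourth rows of the display. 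The eight $\hsf$-identities are identical up to moving the surviving entry to the middle slot: $\hsf$'s middle matrix has its unique nonzero entry at position $(2,2)$, so the $\xsf$ (or $\ysf$) row/column that is hit is different, but the pattern of signs and $a,b$ factors is the same, matching $\Hsf_i$.

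The main ``obstacle'' is purely bookkeeping: making sure one stays on the correct patch throughout, tracks the overall factor of $b$ hidden in $\xsf_2$ and absorbed in $\ysf_2$, and orients the one-sided nonzero entries of the $\gsf$/$\hsf$ matrices against the correct row or column of $\xsf_i$ and $\ysf_i$. No deeper input is needed; the identities are entirely formal consequences of the explicit matrices in \S\ref{subsec:degone} and \S\ref{subsec:shapes}, together with the observation that $\scrE_3\to\scrE_1$ components vanish on both sides because neither $\xsf$'s $\scrE_3\to\scrE_2$ map nor $\gsf$'s/$\hsf$'s $\scrE_3\to\scrE_2$ map (the latter being zero) cooperates with the sparsity pattern of the other factor.
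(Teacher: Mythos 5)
Your proposal is correct and follows essentially the same route as the paper: the first-column identities are verified by direct multiplication of the explicit matrices (the paper's ``by inspection''), and the remaining columns follow from $\ysf_1=a\cdot\xsf_1$, the relation $\xsf_2=b\cdot\ysf_2$, and repeating the computation on the chart $\scrU_2$. The sign and sparsity bookkeeping you describe matches the paper's intended verification, so nothing further is needed.
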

\begin{proof}
The first column holds by inspection.  The second column follows from the first since $\ysf_1=a\cdot\xsf_1$.  The fourth is $\scrU_2$ version of the first column, and the third column follows from the fourth since $\xsf_2=b\cdot\ysf_2$.
\end{proof}

In the following, note that whenever $\ksf_i$ is defined, both $\Ksf_i$ and $\Ksf_{i+1}$ are defined.
\begin{cor}\label{cor:xandkanticommute}
For all $i$ such that $\ksf_i$ is defined, in $\scrC$ there are equalities
\begin{alignat*}{5}
\xsf\star \ksf_i&=\Ksf_i &=-\ksf_i\star\xsf,\\
\ysf\star\ksf_i&=\Ksf_{i+1}&=-\ksf_i\star\ysf.
\end{alignat*}
\end{cor}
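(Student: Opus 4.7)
The plan is as follows. First, observe that $\xsf$, $\ysf$, and $\ksf_i$ all have vanishing $\scrC_{12}$-component, so the formula \eqref{eqn:star} for $\star$ collapses to $\asf\star\bsf=(\asf_1\circ\bsf_1,\,\asf_2\circ\bsf_2,\,0)$ with no \v{C}ech correction term. The four claimed identities thus reduce to eight chart-level equalities of compositions of maps of sheaves, which are all immediate consequences of Lemma~\ref{lem:gstarxyGH} once the polynomial coefficients appearing in the definition of $\ksf_i$ (Notation~\ref{not:definitionk}) are pulled out of the composition.

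Concretely, I would split by the two intervals in Notation~\ref{not:definitionk}. On the first interval $0\leq i\leq\kx-2$, where $\ksf_i=(a^i\gsf_1,\,b^{\kx-2-i}\gsf_2,\,0)$, the identities $\xsf_1\circ\gsf_1=\Gsf_1$ and $\xsf_2\circ\gsf_2=b\cdot\Gsf_2$ from Lemma~\ref{lem:gstarxyGH} give $\xsf\star\ksf_i=(a^i\Gsf_1,\,b^{\kx-1-i}\Gsf_2,\,0)$, which is precisely $\Ksf_i$ in the first interval of Notation~\ref{def:Ki}. The identity $-\ksf_i\star\xsf=\Ksf_i$ then follows from $-\gsf_1\circ\xsf_1=\Gsf_1$ and $-\gsf_2\circ\xsf_2=b\cdot\Gsf_2$ by the same pull-out of scalars. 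For the $\ysf$-statement, the relations in Lemma~\ref{lem:gstarxyGH} carry an extra factor $a$ on $\scrU_1$ (since $\ysf_1=a\cdot\xsf_1$) and drop a factor $b$ on $\scrU_2$ (since $\ysf_2=b^{-1}\cdot\xsf_2$), which is exactly the shift $(a^i,\,b^{\kx-1-i})\rightsquigarrow(a^{i+1},\,b^{\kx-2-i})$ taking $\Ksf_i$ to $\Ksf_{i+1}$.

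The second interval $\kx+1\leq i\leq\kx+\ky-1$, with $\ksf_i=(a^{i-(\kx+1)}\hsf_1,\,b^{(\kx+\ky-1)-i}\hsf_2,\,0)$, runs identically using the $\hsf,\Hsf$ half of Lemma~\ref{lem:gstarxyGH} together with the second interval of Notation~\ref{def:Ki}, and no new ideas are required.

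The only real obstacle is the bookkeeping of index ranges at the two endpoints of each interval: I must verify that whenever $\ksf_i$ is defined, both $\Ksf_i$ and $\Ksf_{i+1}$ are defined and lie in the same interval of Notation~\ref{def:Ki}, so that the output of the calculation really matches the claim. For $0\leq i\leq\kx-2$ one has $0\leq i+1\leq\kx-1$, which lies in the first $\Ksf$-range $0\le\cdot\le\kx-1$; for $\kx+1\leq i\leq\kx+\ky-1$ one has $\kx+2\leq i+1\leq\kx+\ky$, which lies in the second $\Ksf$-range $\kx+1\le\cdot\le\kx+\ky$. Once this is confirmed, the corollary follows.
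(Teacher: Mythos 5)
Your proposal is correct and follows essentially the same route as the paper's proof: split according to the two intervals in \ref{not:definitionk}, apply \ref{lem:gstarxyGH} after pulling the polynomial coefficients out of the compositions, and match the resulting exponents against \ref{def:Ki}. Your extra remarks (vanishing of the \v{C}ech term in \eqref{eqn:star} and the endpoint bookkeeping for $\Ksf_i,\Ksf_{i+1}$) are points the paper leaves implicit or records just before the statement, so nothing substantive differs.
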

\begin{proof}
In the first interval, $\ksf_i=(a^i\cdot\gsf_1,\,b^{\kx-2-i}\cdot \gsf_2,0)$, so by \ref{lem:gstarxyGH}
\[
-\ksf_i\star\xsf=-(a^i\cdot\gsf_1\circ \xsf_1,\,b^{\kx-2-i}\cdot \gsf_2\circ \xsf_2,0)=(a^i\cdot\Gsf_1,\,b^{\kx-1-i}\cdot\Gsf_2,0)=\Ksf_i.
\]
Similarly, in the second interval, by  \ref{lem:gstarxyGH}
\[
-\ksf_i\star\xsf=-(a^{i-(\kx+1)}\cdot\hsf_1\circ \xsf_1,\,b^{(\kx+\ky-1)-i}\cdot\hsf_2\circ \xsf_2,0)=(a^{i-(\kx+1)}\cdot\Hsf_1,\,b^{\kx+\ky-i}\cdot\Hsf_2,0)=\Ksf_i.
\]
All other claims are similar.  
\end{proof}

The following deals with $\star$ between degree one and degree two inputs.
\begin{lemma}\label{lem:Deg2starx}
In $\scrC$, for all $i,j\geq 0$, there are equalities
\begin{alignat*}{3}
\xsf\star (a^i\cdot \Zsf_{1}, \,\,b^{j}\cdot\Zsf_{2}, 0) &= -(\,\,\,\,\,\,a^i\cdot \mathfrak{s}_{1}, \,\,b^{j+1}\cdot\mathfrak{s}_{2}, 0) = (a^i\cdot \Zsf_{1}, \,\,b^{j}\cdot\Zsf_{2}, 0)\star \xsf\phantom{.}\\
\ysf\star (a^i\cdot \Zsf_{1}, \,\,b^{j}\cdot\Zsf_{2}, 0) &= -(a^{i+1}\cdot \mathfrak{s}_{1}, \,\, \,\,\,\,\,\,b^{j}\cdot\mathfrak{s}_{2}, 0) = (a^i\cdot \Zsf_{1}, \,\,b^{j}\cdot\Zsf_{2}, 0)\star \ysf.
\end{alignat*}
\end{lemma}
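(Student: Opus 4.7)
The plan is to unwind both sides by direct matrix composition, exploiting the fact that every element appearing in this lemma has vanishing third (\v{C}ech) component. By the defining formula \eqref{eqn:star} for $\star$, since $\xsf_{12}=0$ and the \v{C}ech component of $(a^i\cdot\Zsf_1,\, b^j\cdot\Zsf_2,\,0)$ also vanishes, the sign $(-1)^i$ correction in the third slot is irrelevant, and both $\xsf\star(a^i\cdot\Zsf_1,\,b^j\cdot\Zsf_2,\,0)$ and $(a^i\cdot\Zsf_1,\,b^j\cdot\Zsf_2,\,0)\star\xsf$ reduce to the chartwise composites with vanishing third component.

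First I would compute $\xsf_1\circ(a^i\cdot\Zsf_1)$ on $\scrU_1$. The result is a degree three endomorphism, and by degree reasons the only surviving component is $\scrE_3\to\scrE_0$, obtained by composing $(\Zsf_1)_3=(0,-1,0)^T$ with $(\xsf_1)_1=(0,-1,0)$. This scalar product equals $+1$, so reading off the definition of $\mathfrak{s}_1$ (which has value $-1$ on the map $\scrE_3\to\scrE_0$) we conclude $\xsf_1\circ(a^i\cdot\Zsf_1)=-a^i\cdot\mathfrak{s}_1$. The analogous computation on $\scrU_2$ picks up an extra factor of $b$ coming from $(\xsf_2)_1=b\cdot(0,-1,0)$, yielding $\xsf_2\circ(b^j\cdot\Zsf_2)=-b^{j+1}\cdot\mathfrak{s}_2$.

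For the right product $(a^i\cdot\Zsf_1,\,b^j\cdot\Zsf_2,\,0)\star\xsf$, the same bookkeeping applies: on $\scrU_1$ one now composes $(\Zsf_1)_2=(-1,0,0)$ with $(\xsf_1)_3=(-1,0,0)^T$, again producing $+1$; on $\scrU_2$ one obtains the same $b^{j+1}$. Hence left and right products agree, giving both equalities in the first line.

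For the $\ysf$-line, the key observation, recorded immediately after the introduction of $\xsf,\ysf$, is that $\ysf_1$ is obtained from $\xsf_1$ by multiplying every vertical matrix by $a$, and $\ysf_2$ from $\xsf_2$ by dividing by $b$. Consequently every matrix composition above is scaled by $a$ on $\scrU_1$ (giving $-a^{i+1}\cdot\mathfrak{s}_1$) and by $b^{-1}$ on $\scrU_2$ (cancelling the earlier $b$ to yield $-b^{j}\cdot\mathfrak{s}_2$); the same two-sided agreement is automatic from the two calculations for $\xsf$. I anticipate no real obstacle: the entire statement is elementary matrix bookkeeping, and the only points requiring genuine care are (i) confirming the \v{C}ech components of each side vanish so that the $(-1)^i$ sign of \eqref{eqn:star} plays no role, and (ii) carefully tracking the single nonvanishing scalar entry through the degree three composite in order to identify it with the correct $\pm$ multiple of $\mathfrak{s}_i$.
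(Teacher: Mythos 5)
Your proposal is correct and follows essentially the same route as the paper: the paper's proof simply records the chartwise identities $\xsf_1\circ\Zsf_1=-\mathfrak{s}_1=\Zsf_1\circ\xsf_1$ and $\xsf_2\circ\Zsf_2=-b\cdot\mathfrak{s}_2=\Zsf_2\circ\xsf_2$ ``as can be directly verified'', and deduces the $\ysf$ case similarly, which is exactly the matrix bookkeeping you carry out (with the extra observation about the vanishing \v{C}ech component, which is immediate from \eqref{eqn:star}). Your filled-in details, including the scalars $+1$ and the factor $b$ on $\scrU_2$ and the scaling $\ysf_1=a\cdot\xsf_1$, $\ysf_2=b^{-1}\cdot\xsf_2$, are all accurate.
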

\begin{proof}
The first statement follows from the fact that $\xsf_1\circ\Zsf_1=-\mathfrak{s}_1=\Zsf_1\circ\xsf_1$ and $\xsf_2\circ\Zsf_2=-b\cdot\mathfrak{s}_2=\Zsf_2\circ\xsf_2$, as can be directly verified.  The second statement follows similarly.
\end{proof}

\subsection{Notation for Induction}\label{sec:inductivesetup}
This subsection lays down some notation useful for the induction in \ref{thm:Umaintext}, and furthermore introduces some key homotopies. 

Recall  the polynomials $A_j$ and $B_j$ from \S\ref{subsec:AandB}.  
\begin{notation}\label{not:geq}
For $i\geq 3$, set $A_{\geq i}=\sum_{j\geq i}v_2^{j-\ordP}A_{j}$ and $B_{\geq i}=\sum_{j\geq i}w_2^{j-\ordP}B_{j}$.
\end{notation}
\begin{remark}\label{rem:Ageq3}
By \eqref{eqn:Aassum} and the definition above, $A=A_{\geq 3}$.
\end{remark}
Multiplying the definition by the appropriate power of $v_2$, it is clear that
\begin{equation}
\begin{aligned}
v_2^{\ordP-i}A_{\geq i} &= A_i + v_2^{\ordP-i}A_{\geq i+1}\\
w_2^{\ordP-i}B_{\geq i} &= B_i + w_2^{\ordP-i}B_{\geq i+1}.
\end{aligned}\label{eq:splitAgeq}
\end{equation}

\begin{lemma}\label{lem:AiBi2}
For all $i\geq 3$, $A_{\geq i}=a^{\kx+\ky-\ordP}B_{\geq i}$ and $a^{2-\kx-\ky}v_2^{\ordP-i}A_{\geq i}=b^{i-2}w_2^{\ordP-i}B_{\geq i}$.
\end{lemma}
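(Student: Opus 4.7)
The plan is straightforward: this lemma is the aggregated version of Lemma~\ref{lem:AiBi1}, so both statements should follow by summing the graded statements and then converting between the two coordinate systems using $b=a^{-1}$ and $w_2=a^{-1}v_2$ on $\scrU_{12}$.

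First I would prove the identity $A_{\geq i}=a^{\kx+\ky-\ordP}B_{\geq i}$ by direct computation. By definition,
\[
A_{\geq i}=\sum_{j\geq i} v_2^{j-\ordP}A_j.
\]
Applying Lemma~\ref{lem:AiBi1} termwise gives $A_j=a^{\kx+\ky-j}B_j$. Then using $v_2=aw_2$ on $\scrU_{12}$, so that $v_2^{j-\ordP}=a^{j-\ordP}w_2^{j-\ordP}$, each summand becomes
\[
v_2^{j-\ordP}A_j=a^{j-\ordP}w_2^{j-\ordP}\cdot a^{\kx+\ky-j}B_j=a^{\kx+\ky-\ordP}w_2^{j-\ordP}B_j.
\]
Pulling the common factor $a^{\kx+\ky-\ordP}$ outside the sum yields $A_{\geq i}=a^{\kx+\ky-\ordP}B_{\geq i}$, as required.

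For the second claim, I would just substitute the first into the left-hand side and simplify. Multiplying by $a^{2-\kx-\ky}v_2^{\ordP-i}$ gives
\[
a^{2-\kx-\ky}v_2^{\ordP-i}A_{\geq i}=a^{2-\ordP}v_2^{\ordP-i}B_{\geq i},
\]
and then using $v_2^{\ordP-i}=a^{\ordP-i}w_2^{\ordP-i}$ and $a^{2-i}=b^{i-2}$ yields the desired expression $b^{i-2}w_2^{\ordP-i}B_{\geq i}$. There is no real obstacle here; the only care needed is tracking that the exponents appearing in Lemma~\ref{lem:AiBi1} combine correctly so the conversion between $v_2,a$ and $w_2,b$ produces a single common factor that can be pulled outside the sum over $j\geq i$.
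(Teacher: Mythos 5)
Your proof is correct and follows essentially the same route as the paper: summing the termwise identity from \ref{lem:AiBi1} (you rederive its second statement from its first using $v_2=aw_2$, which the paper simply cites) to get $A_{\geq i}=a^{\kx+\ky-\ordP}B_{\geq i}$, and then converting via $v_2=aw_2$ and $b=a^{-1}$ for the second identity. The exponent bookkeeping in your final step checks out.
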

\begin{proof}
The statement $A_{\geq i}=a^{\kx+\ky-\ordP}B_{\geq i}$ is immediate from the last statement in \ref{lem:AiBi1}. The second statement follows by the first, since $v_2=aw_2$ by the glue \eqref{eq!glue}.
\end{proof}

The following, which are based on the shape $\zsf$,  will be the key inductive homotopies.
\begin{notation}\label{def:eik}
For any $i\geq 3$, and for any $0\leq k\leq i-1$, set
\begin{equation}
\esf_{i,k} \colonequals
(a^k(v_2^{\ordP-i-1}A_{\geq i+1})\cdot\zsf_1,
\,\, b^{i-k-1}(w_2^{\ordP-i-1}B_{\geq i+1})\cdot\zsf_2,
\, 0)\in\scrC_1.\label{eqn:eik}
 \end{equation}
\end{notation}
By \eqref{eq:splitAgeq}, all entries in \eqref{eqn:eik} are polynomials.  

\begin{cor}\label{cor:inductwithe}
With notation as above,
\[
\dd( \esf_{i,k})=
(\,a^k(v_2^{\ordP-i}A_{\geq i+1})\cdot\Zsf_1,\,\, b^{i-k-1}(w_2^{\ordP-i}B_{\geq i+1})\cdot\Zsf_2,\,0).
\]
\end{cor}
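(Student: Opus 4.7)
The plan is to apply the third formula in Lemma~\ref{lem:DztoZhgeneral} directly, since $\esf_{i,k}$ is built out of the shape $\zsf$. With
\[
f = a^k\bigl(v_2^{\ordP-i-1}A_{\geq i+1}\bigr), \qquad g = b^{i-k-1}\bigl(w_2^{\ordP-i-1}B_{\geq i+1}\bigr),
\]
one immediately reads off
\[
\dd(\esf_{i,k}) = \bigl(v_2 f\cdot\Zsf_1,\ w_2 g\cdot\Zsf_2,\ (f - a^{\kx+\ky-2}g)\cdot\zsf_1|_{\scrU_{12}}\bigr).
\]
The first two components become $a^k(v_2^{\ordP-i}A_{\geq i+1})\cdot\Zsf_1$ and $b^{i-k-1}(w_2^{\ordP-i}B_{\geq i+1})\cdot\Zsf_2$, which is exactly the target, so the only genuine content is to verify that the \v{C}ech component vanishes.

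The main (and only nontrivial) step is to check that $f = a^{\kx+\ky-2}g$ on $\scrU_{12}$. Using $b = a^{-1}$, the right-hand side becomes $a^{\kx+\ky-2-(i-k-1)}\cdot w_2^{\ordP-i-1}B_{\geq i+1} = a^{\kx+\ky-i+k-1}\cdot w_2^{\ordP-i-1}B_{\geq i+1}$. After cancelling the common factor $a^k$, the desired equality reduces to
\[
v_2^{\ordP-i-1}A_{\geq i+1} = a^{\kx+\ky-i-1}\,w_2^{\ordP-i-1}B_{\geq i+1},
\]
and this is precisely Lemma~\ref{lem:AiBi2} applied with $i$ replaced by $i+1$ (multiplied through by $a^{\kx+\ky-2}$). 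So the third component is zero.

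I do not expect any real obstacle: the work was already absorbed into Lemmas~\ref{lem:DztoZhgeneral} and~\ref{lem:AiBi2}, and the argument is essentially a bookkeeping match of exponents together with the relation $A_{\geq i+1} = a^{\kx+\ky-\ordP}B_{\geq i+1}$ transported to the correct bidegree via $v_2 = aw_2$. The polynomiality of $f$ and $g$ (required for $\esf_{i,k}$ to be a well-defined element of $\scrC_1$ rather than just a rational one on $\scrU_{12}$) was already noted right after \ref{def:eik} via \eqref{eq:splitAgeq}, so no further regularity check is needed here.
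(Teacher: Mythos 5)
Your proposal is correct and follows essentially the same route as the paper: both apply the third line of Lemma~\ref{lem:DztoZhgeneral} with the same choices of $f$ and $g$, and then verify $f=a^{\kx+\ky-2}g$ on $\scrU_{12}$ via Lemma~\ref{lem:AiBi2} (you invoke its second statement with $i\mapsto i+1$, while the paper uses the first statement together with $v_2=aw_2$ and $ab=1$, which amounts to the same computation). The exponent bookkeeping and the polynomiality remark are accurate, so nothing is missing.
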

\begin{proof}
Set $f=a^k(v_2^{\ordP-i-1}A_{\geq i+1})$ and $g=b^{i-k-1}(w_2^{\ordP-i-1}B_{\geq i+1})$, which are both polynomials by \eqref{eq:splitAgeq}. Now by the first statement in \ref{lem:AiBi2}, $A_{\geq i+1}=a^{\kx+\ky-\ordP}B_{\geq i+1}$, so 
\begin{align*}
a^kv_2^{\ordP-i-1}A_{\geq i+1}&=a^{k+\kx+\ky-\ordP}v_2^{\ordP-i-1}B_{\geq i+1}\tag{$A_{\geq i+1}=a^{\kx+\ky-\ordP}B_{\geq i+1}$}\\
&=a^{k+\kx+\ky-i-1}w_2^{\ordP-i-1}B_{\geq i+1}\tag{by glue \eqref{eq!glue}}\\
&=a^{\kx+\ky-2}b^{i-k-1}w_2^{\ordP-i-1}B_{\geq i+1}.\tag{$ab=1$}
\end{align*}
Thus $f=a^{\kx+\ky-2}g$, equivalently  $a^{2-\kx-\ky}f=g$.  The result is then a direct application of the general \ref{lem:DztoZhgeneral}, where the third entry is now zero since  $a^{2-\kx-\ky}f=g$.
%
\end{proof}

\begin{lemma}\label{lem:explusxe}
For any $i\geq 3$, and for any $0\leq k\leq i-1$, there are equalities
\begin{enumerate}
\item\label{lem:explusxe1} $\esf_{i,k}\star\xsf=\esf_{i,k-1}\star\ysf$ and $\xsf\star\esf_{i,k}=\ysf\star \esf_{i,k-1}$ provided that $k\geq 1$.
\item\label{lem:explusxe2} $ 
\xsf\star \esf_{i,k} +  \esf_{i,k}\star\xsf=-(\,a^kv_2^{\ordP-(i+1)}A_{\geq i+1}\cdot\Zsf_1, \,\,b^{i-k}w_2^{\ordP-(i+1)}B_{\geq i+1}\cdot\Zsf_2,\,0)$
\end{enumerate}
\end{lemma}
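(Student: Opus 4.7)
The whole statement is a direct unpacking of the definitions of $\star$, $\xsf$, $\ysf$, $\zsf$, $\Zsf$, and $\esf_{i,k}$. The third component of both sides is automatically zero in Part (1) and Part (2), because the formula \eqref{eqn:star} for the third component of $\asf\star\bsf$ involves only $\asf_{12}$ and $\bsf_{12}$, and in all instances these are zero. So in both parts it suffices to verify the identities on $\scrU_1$ and $\scrU_2$ separately.

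For Part (1), the key point is the scaling relations between $\xsf$ and $\ysf$ built into their definitions in \S\ref{subsec:degone}, namely $\ysf_1 = a\cdot\xsf_1$ on $\scrU_1$ and $\xsf_2 = b\cdot\ysf_2$ on $\scrU_2$. Using that polynomial scalars in $\scrO_{\scrU_t}$ commute through matrix composition, the first component of $\esf_{i,k-1}\star\ysf$ is
\[
a^{k-1}(v_2^{\ordP-i-1}A_{\geq i+1})\cdot(\zsf_1\circ\ysf_1)
=a^{k-1}(v_2^{\ordP-i-1}A_{\geq i+1})\cdot a\cdot(\zsf_1\circ\xsf_1),
\]
which matches the first component of $\esf_{i,k}\star\xsf$. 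The second component is handled analogously using $\xsf_2=b\cdot\ysf_2$. The identity $\xsf\star\esf_{i,k}=\ysf\star\esf_{i,k-1}$ is proved in the same way, with the composition order reversed.

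For Part (2), extract the common scalar $a^k(v_2^{\ordP-i-1}A_{\geq i+1})$ from the first component of each summand, so that one only needs to evaluate
\[
\xsf_1\circ\zsf_1+\zsf_1\circ\xsf_1,
\]
and similarly with the scalar $b^{i-k-1}(w_2^{\ordP-i-1}B_{\geq i+1})$ pulled out on $\scrU_2$, evaluate $\xsf_2\circ\zsf_2+\zsf_2\circ\xsf_2$. Both are short $3\times 3$ matrix products using the explicit vertical maps recorded in \S\ref{subsec:degone} and \S\ref{subsec:shapes}: on $\scrU_1$ the only nonzero contributions come from the $\scrE_3\to\scrE_1$ slot of $\zsf_1\circ\xsf_1$ and the $\scrE_2\to\scrE_0$ slot of $\xsf_1\circ\zsf_1$, giving $-\Zsf_1$; on $\scrU_2$ the analogous computation gives $-b\cdot\Zsf_2$, the extra $b$ appearing because the vertical maps of $\xsf_2$ carry a factor of $b$. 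Combining with the pulled-out scalars and using $\ordP-i-1=\ordP-(i+1)$ recovers the stated formula. The only thing that could go subtly wrong is the chart-2 factor of $b$, which is exactly what converts $b^{i-k-1}$ to the required $b^{i-k}$, so the indices align without any further cancellation; this is the step I would double-check most carefully.
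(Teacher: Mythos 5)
Your proposal is correct and follows essentially the same route as the paper: part (1) via the scaling relations $\ysf_1=a\cdot\xsf_1$, $\xsf_2=b\cdot\ysf_2$ and absorbing a power of $a$ (resp.\ $b$) into the scalar prefactor, and part (2) by pulling out the common scalars and reducing to the chart-level identities $\xsf_1\circ\zsf_1+\zsf_1\circ\xsf_1=-\Zsf_1$ and $\xsf_2\circ\zsf_2+\zsf_2\circ\xsf_2=-b\cdot\Zsf_2$, which the paper simply asserts "as can be directly verified" and whose slot-by-slot verification (including the chart-2 factor of $b$ turning $b^{i-k-1}$ into $b^{i-k}$) you carry out correctly.
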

\begin{proof}
(1) Since $k\geq 1$, the element $\esf_{i,k-1}$ is defined.  The result then follows since
\begin{align*}
\xsf\star\esf_{i,k}&=(a^kv_2^{\ordP-i-1}A_{\geq i+1}\cdot\xsf_1\circ\zsf_1,
\,\, b^{i-k-1}w_2^{\ordP-(i+1)}B_{\geq i+1}\cdot\xsf_2\circ\zsf_2,
\, 0)\\
&=(a^{k-1}v_2^{\ordP-(i+1)}A_{\geq i+1}\cdot(a\cdot\xsf_1)\circ\zsf_1,
\,\, b^{i-k}w_2^{\ordP-(i+1)}B_{\geq i+1}\cdot (b^{-1}\cdot\xsf_2)\circ\zsf_2,
\, 0)\\
&=(a^{k-1}v_2^{\ordP-(i+1)}A_{\geq i+1}\cdot \ysf_1\circ\zsf_1,
\,\, b^{i-k}w_2^{\ordP-(i+1)}B_{\geq i+1}\cdot \ysf_2\circ\zsf_2,
\, 0)\\
&=\ysf\star\esf_{i,k-1}.
\end{align*}
The other claim is similar.\\
(2) By definition, $\xsf\star \esf_{i,k} +  \esf_{i,k}\star\xsf$ equals
\[
(a^kv_2^{\ordP-(i+1)}A_{\geq i+1}\cdot(\xsf_1\circ\zsf_1+\zsf_1\circ\xsf_1),
\,\, b^{i-k-1}w_2^{\ordP-(i+1)}B_{\geq i+1}\cdot(\xsf_2\circ\zsf_2+\zsf_2\circ\xsf_2),
\, 0).
\]
The statement follows since $\xsf_1\circ\zsf_1+\zsf_1\circ\xsf_1=-\Zsf_1$ and $\xsf_2\circ\zsf_2+\zsf_2\circ\xsf_2=-b\cdot\Zsf_2$, as can be directly verified. 
\end{proof}

The following products all vanish for very elementary reasons.
\begin{lemma}\label{lem:kstark}
Wherever $\Ksf_*$, $\ksf_*$ and $\esf_{*,*}$ are defined, the following products
\[
\begin{array}{llll}
\ksf_* \star\esf_{*,*} &\quad \esf_{*,*} \star \ksf_*&\quad\ksf_*\star\ksf_{*'} &\quad \esf_{*,*}\star\esf_{*',*'}\\
\ksf_*\star \Xsf &\quad  \Xsf\star\ksf_* &\quad  \ksf_*\star \Ysf &\quad \Ysf\star\ksf_* \\
\esf_{*,*}\star \Xsf &\quad  \Xsf\star\esf_{*,*}  &\quad   \esf_{*,*}\star \Ysf &\quad \Ysf\star\esf_{*,*} \\
\Ksf_*\star\ksf_* &\quad\ksf_*\star\Ksf_* &\quad \Ksf_*\star\esf_{*,*} &\quad \esf_{*,*}\star\Ksf_* \\
\Ksf_i\star \xsf &\quad \xsf\star\Ksf_i &\quad  \Ksf_i\star \ysf &\quad \ysf\star\Ksf_i 
\end{array}
\]
all equal $0_\scrC$.
\end{lemma}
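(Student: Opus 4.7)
The plan is to reduce the entire statement to a small table of elementary matrix-product checks on each chart. Every factor appearing in the lemma has vanishing \v{C}ech component, so by the formula~\eqref{eqn:star} for $\star$ the \v{C}ech component of each product also vanishes; it thus suffices to verify each identity separately on $\scrU_1$ and $\scrU_2$. Moreover, by the shape convention of Remark~\ref{rem:shapes} each factor is a polynomial multiple of one of the basic shapes $\gsf,\hsf,\zsf,\Gsf,\Hsf,\Zsf$, and the polynomial prefactors commute past the matrix product, so the task reduces to checking that certain fixed composites of shape components are zero.

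First I would tabulate the homological support of each shape: $\gsf,\hsf$ are supported on $\scrE_2\to\scrE_1$ and $\scrE_1\to\scrE_0$; $\zsf$ only on $\scrE_2\to\scrE_1$; $\Gsf,\Hsf$ only on $\scrE_2\to\scrE_0$; and $\Zsf$ on $\scrE_3\to\scrE_1$ and $\scrE_2\to\scrE_0$. Since a degree-two product has potentially non-zero components only in $\scrE_2\to\scrE_0$ and $\scrE_3\to\scrE_1$, and a degree-three product only in $\scrE_3\to\scrE_0$, direct inspection against the table shows that the large majority of the twenty products vanish for pure support reasons: the only potentially non-trivial component of the composite factors through a trivial component of one of the two shapes. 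Carrying out this check case by case disposes of $\esf\star\ksf$, $\esf\star\esf$, $\Xsf\star\ksf$, $\Ysf\star\ksf$, $\esf\star\Xsf$, $\Xsf\star\esf$, $\esf\star\Ysf$, $\Ysf\star\esf$, $\Ksf\star\ksf$, $\ksf\star\Ksf$, $\Ksf\star\esf$, $\esf\star\Ksf$, $\xsf\star\Ksf$ and $\ysf\star\Ksf$ without any calculation.

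The six residual cases $\ksf_*\star\esf_{*,*}$, $\ksf_*\star\ksf_{*'}$, $\ksf_*\star\Xsf$, $\ksf_*\star\Ysf$, $\Ksf_*\star\xsf$ and $\Ksf_*\star\ysf$ each reduce to a single matrix-product check. Each amounts to multiplying a $1\times 3$ row taken from the $\scrE_1\to\scrE_0$ or $\scrE_2\to\scrE_0$ component of the first factor against a column vector or sparse $3\times 3$ matrix from the second factor, and in every subcase the non-zero entries sit in complementary rows and columns, so for instance
\[
\begin{psmallmatrix} 0 & 0 & 1 \end{psmallmatrix}\begin{psmallmatrix} 0 & 0 & 0\\ 0 & 0 & 1\\ 0 & 0 & 0\end{psmallmatrix}=\mathbf{0}\quad\text{and}\quad \begin{psmallmatrix} 0 & 0 & -1 \end{psmallmatrix}\begin{psmallmatrix} -1\\ 0\\ 0\end{psmallmatrix}=0,
\]
with the $\hsf$-, $\Hsf$- and mixed $\gsf$/$\hsf$ variants all vanishing by the same row/column orthogonality. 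The corresponding identities on $\scrU_2$ are identical, with the $v$-coordinates replaced by $w$-coordinates.

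The main ``obstacle'' is purely organisational: there are twenty products, and each admits up to four sub-cases depending on which of $\gsf/\hsf$ or $\Gsf/\Hsf$ underlies each factor and on whether the subscript of $\ksf_i$ or $\Ksf_i$ lies in the first or second interval of Notation~\ref{not:definitionk}. Fortunately every entry of the resulting table resolves either by support or by one of the elementary row/column orthogonalities above, so no single step is substantively harder than the computations already carried out in Lemma~\ref{lem:compdegone} or Lemma~\ref{lem:gstarxyGH}.
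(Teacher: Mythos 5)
Your proposal is correct and follows essentially the same route as the paper: both reduce, via the vanishing of the \v{C}ech components in \eqref{eqn:star}, to chart-by-chart compositions of the underlying shapes $\gsf,\hsf,\zsf,\Gsf,\Hsf,\Zsf$, and then observe that every composite dies either because it factors through a zero component or by the elementary row/column orthogonalities you exhibit (which is exactly what the paper summarises as ``all compositions involving $\zsf$, $\gsf$ and $\hsf$ are identically zero'', etc.). Your bookkeeping of the twenty products into fourteen support-vanishing cases and six matrix-orthogonality cases is accurate and matches the paper's shape-based argument.
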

\begin{proof}
Most follow at once from the shapes of these elements. The first row holds since all compositions involving $\zsf$, $\gsf$ and $\hsf$ are identically zero.  The second and third row holds since all compositions of $\Zsf$ with $\gsf$, $\hsf$ and $\zsf$ are zero. For the fourth line, $\Ksf_*$ is based on $\Gsf$ or $\Hsf$, so its only nonzero maps are $\scrE_2\rightarrow\scrE_0$, whilst $\ksf_*$ is based on $\gsf$ or $\hsf$, both of which have zero map $\scrE_3\rightarrow\scrE_2$.  Thus any composition of $\ksf_*$ with $\Ksf_*$ is zero. Similarly $\esf_{*,*}$ is based on $\zsf$, which again has zero map $\scrE_3\rightarrow\scrE_2$ and the vanishing follows.  The last line holds since all compositions of $\xsf$ and $\ysf$ with $\Gsf$ and $\Hsf$ are zero, as again can be explicitly observed.
\end{proof}

\subsection{Summary}\label{rem:shapes2}
As a summary of this section, the closed elements $\xsf,\ysf$ induce generators of $\mathrm{H}^1(\scrC)$, and likewise $\Xsf,\Ysf$ induce generators of $\mathrm{H}^2(\scrC)$, whilst $\mathfrak{s}$ generates $\mathrm{H}^3(\scrC)$.  These, together with their corresponding underlying shapes, are summarised as follows
\[
\begin{array}{r|ccccc}
\textrm{element} & \xsf & \ysf & \Xsf & \Ysf & \upxi \\
\hline
\textrm{shape} &  &  & \Zsf & \Zsf & \mathfrak{s} \\
\hline
\textrm{see} & \S\ref{subsec:degone} && \ref{def:XY} &\ref{def:XY}& \ref{def:s}
\end{array}
\]
where we note that $\xsf$ and $\ysf$ are not usefully defined by shapes.  In contrast, the elements
\[
\begin{array}{r|cccccc}
\textrm{element} & \ksf_i & \esf_{i,k}& \Ksf_i  & \dd\ksf_i & \dd\esf_{i,k} & \dd\Ksf_i  \\
\hline
\textrm{shape} & \gsf, \hsf & \zsf& \Gsf, \Hsf  & \Zsf & \Zsf & \mathfrak{s} \\
\hline
\textrm{see} & \ref{not:definitionk} & \ref{def:eik} & \ref{def:Ki} &\ref{lem:effectPsiPhi} & \ref{cor:inductwithe} & \ref{lem:effectAlphaBeta}
\end{array}
\]
will provide homotopies needed to control the  Kadeishvili algorithm in \S\ref{sec:Ainfty} below.

\section{The \texorpdfstring{$\Ainf$}{Ainfty} Minimal Model}\label{sec:Ainfty}

In this section, regardless of $\charac\K$, we describe the $\Ainf$-minimal model of the DG-algebra $\scrC$ constructed in \S\ref{sec:DGAC}, and in the process pin down all its higher products.

\subsection{Recap on Minimal Models}\label{subsec:MinModelRecap}
The main result will be proved by a very careful use of the $\Ainf$-minimal model due to Kadeishvili \cite{Kadeishvili}, which we state to set notation.  As in \cite{Kadeishvili}, all $\Ainf$-algebras considered in this paper are strictly unital, meaning there is an element $1\in\mathsf{A}_0$ which is the identity with respect to multiplication $\msf_2$, and further $\msf_n(\hdots,1,\hdots)=0$ for all $n\geq 3$.

Under Setup~\ref{setup:key}, consider the DG-algebra $\scrC$ in \S\ref{sec:DGAC}, and consider the finite dimensional vector space $\mathsf{A}\colonequals\mathrm{H}^*(\scrC)$.    Following \cite{Kadeishvili}, for a homogeneous element $\asf\in\mathsf{A_i}$ set $\hat{\asf}=(-1)^{|\asf|}\asf$ where $|\asf|=i$.

\begin{thm}[{\cite[Theorem 1]{Kadeishvili}}]\label{thm:kadeishvili}
There is an $\Ainf$-structure on $\mathsf{A}=\mathrm{H}^*(\scrC)$ given by 
\[
\msf_n\colon \otimes^n \mathsf{A}\rightarrow\mathsf{A}
\qquad\mbox{ for every }n\ge1
\]
with $\msf_1=0$, where on homogeneous inputs $\msf_2(\asf_1,\asf_2) = -\hat{\asf}_1\asf_2$, and furthermore an $\Ainf$-morphism
\[
\fsf_n\colon \otimes^n\mathsf{A}\rightarrow\scrC
\qquad\mbox{ for every }n\ge1
\]
for which $\fsf_1\colon\mathsf{A}\rightarrow\scrC$ is a quasi-isomorphism.
\end{thm}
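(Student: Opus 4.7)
The plan is to follow Kadeishvili's original inductive construction, which can be viewed as an instance of the homological perturbation lemma. The input data required is a contraction of $(\scrC,\dd)$ onto $(\mathsf{A},0)$: concretely, a quasi-isomorphism $\fsf_1\colon\mathsf{A}\to\scrC$ obtained by picking a cocycle representative of each cohomology class, a projection $\mathsf{P}\colon\scrC\to\mathsf{A}$, and a homotopy $\mathsf{G}\colon\scrC\to\scrC$ of degree $-1$ satisfying
\[
\mathrm{id}_\scrC-\fsf_1\mathsf{P}=\dd\mathsf{G}+\mathsf{G}\dd,\qquad \mathsf{P}\fsf_1=\mathrm{id}_\mathsf{A},
\]
together with the side conditions $\mathsf{G}^2=0$, $\mathsf{G}\fsf_1=0$, $\mathsf{P}\mathsf{G}=0$. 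Strict unitality of the resulting $\Ainf$-structure is ensured by choosing $\fsf_1(1)=1_\scrC$ and arranging $\mathsf{G}(1_\scrC)=0$.

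Given this data, the higher products $\msf_n$ and morphism components $\fsf_n$ are built simultaneously by induction on $n$. Set $\msf_2(\asf_1,\asf_2)\colonequals\mathsf{P}\bigl(\fsf_1(\asf_1)\cdot\fsf_1(\asf_2)\bigr)$; after the sign shift $\hat{\asf}=(-1)^{|\asf|}\asf$ this matches the formula $\msf_2(\asf_1,\asf_2)=-\hat{\asf}_1\asf_2$ in the statement. For $n\ge 3$, assuming $\msf_k,\fsf_k$ have been constructed for $k<n$ so that the $\Ainf$-relations hold below order $n$, one forms the auxiliary map
\[
\uplambda_n\colonequals\sum_{\substack{r+s=n\\ r,s\ge 1}}(-1)^{\star}\,\mu_\scrC\circ(\fsf_r\otimes\fsf_s)
\]
where $\mu_\scrC$ is the DG-algebra product and $\star$ is the Koszul sign determined by the positions of the inputs. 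Then define
\[
\msf_n\colonequals\mathsf{P}\uplambda_n,\qquad\fsf_n\colonequals-\mathsf{G}\uplambda_n,
\]
so that by the contracting homotopy identity, $\uplambda_n-\fsf_1\msf_n=\dd\fsf_n+\fsf_n\dd_{\mathrm{bar}}$, where $\dd_{\mathrm{bar}}$ is the differential on the tensor coalgebra induced by the already-constructed $\msf_k$ with $k<n$.

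The main obstacle is verifying that $\msf_n$ is a cocycle and that the full tower of $\Ainf$-relations
\[
\sum_{r+s+t=n}(-1)^{r+st}\msf_{r+1+t}\bigl(\mathrm{id}^{\otimes r}\otimes\msf_s\otimes\mathrm{id}^{\otimes t}\bigr)=0
\]
and the analogous morphism relations hold. This is a tree-combinatorial identity: computing $\dd\uplambda_n$ via the graded Leibniz rule in $\scrC$ and then substituting the inductive hypotheses $\dd\fsf_k=\fsf_1\msf_k+(\text{lower-order terms from earlier relations})$ produces a large signed sum of trees, which one must show collapses using associativity of $\mu_\scrC$ together with the side conditions $\mathsf{G}^2=0$ and $\mathsf{G}\fsf_1=0$. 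The sign bookkeeping is genuinely delicate but ultimately mechanical, and the argument is independent of $\charac\K$ because no division is required.

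For the purposes of the present paper, it is worth emphasising that the value of the theorem lies not in its abstract existence statement but in the fact that, once $\fsf_1$ and $\mathsf{G}$ are fixed explicitly, the formulas for $\msf_n$ and $\fsf_n$ become explicit and computable. The shapes and homotopies $\ksf_i$, $\Ksf_i$ and $\esf_{i,k}$ assembled in Section 4 are precisely the ingredients of such an explicit choice of $\mathsf{G}$ on $\scrC$, making the Kadeishvili recursion tractable in the subsequent main theorem.
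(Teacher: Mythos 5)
The paper does not actually prove this statement: it is imported verbatim from Kadeishvili's 1980 paper, and what follows \ref{thm:kadeishvili} in the text is only a recollection of the recursion (the auxiliary maps $\Usf_n$ of \eqref{eqn:defineU} and the defining condition \eqref{eqn:mUf}) in order to fix notation for the later computations. Your proposal is a legitimate alternative packaging of that same recursion, namely the contraction/homological-perturbation (Merkulov-style) version: fix $\fsf_1$, a projection $\mathsf{P}$ and a homotopy $\mathsf{G}$ with side conditions, and set $\msf_n=\mathsf{P}\uplambda_n$, $\fsf_n=-\mathsf{G}\uplambda_n$. Two points of comparison are worth making. First, the paper's (i.e.\ Kadeishvili's) formulation does not fix a global homotopy: at each stage one merely chooses \emph{some} $\fsf_n$ with $\dd\fsf_n=\upiota\msf_n-\Usf_n$, and this non-uniqueness is exploited heavily afterwards (Convention~\ref{convention}, the choice of simple $\fsf_n$, and the explicit choices \eqref{f2degone}, \eqref{eqn:f2degall}); your rigid choice $\fsf_n=-\mathsf{G}\uplambda_n$ proves the abstract theorem but would only reproduce the paper's later bookkeeping if $\mathsf{G}$ were chosen compatibly with those elements, which is stronger than your closing remark suggests. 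Second, your write-up defers exactly the content of the cited theorem: the verification that the $\Ainf$-relations and the morphism relations hold is asserted to be ``mechanical'' rather than carried out, your $\uplambda_n$ omits the $\msf_j$-insertion terms that appear in the paper's $\Usf_n$ (these are eliminated only via the side conditions $\mathsf{G}^2=0$, $\mathsf{G}\fsf_1=0$, $\mathsf{P}\mathsf{G}=0$, so the identity you quote must be checked with them in place), and your bare definition $\msf_2=\mathsf{P}(\fsf_1(\asf_1)\cdot\fsf_1(\asf_2))$ is off from the stated convention $\msf_2(\asf_1,\asf_2)=-\hat{\asf}_1\asf_2$ by the sign $(-1)^{|\asf_1|+1}$, which must be built into $\uplambda_n$ for the recursion to close. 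None of this is a wrong idea --- it is the standard proof --- but as written it is a plan for the proof of the quoted theorem rather than a proof, whereas the paper simply cites it.
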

The construction is recalled below.  For notational convenience, write $\Ainf$-maps
as e.g.\ $\msf_n(\asf_1,\dots,\asf_n)$ rather than $\msf_n(\asf_1\otimes\hdots\otimes\asf_n)$.

\begin{remark}\label{rem:mndeg3inputs}
By definition of an $\Ainf$-algebra, the $\Ainf$-structure maps $\msf_n$ are graded of degree $2-n$, namely
\[
\msf_n(\mathsf{A}_{i_1}\otimes\hdots\otimes\mathsf{A}_{i_n}) \subseteq \mathsf{A}_{i_1+\hdots+i_n+2-n}.
\]
Thus, for example, on degree one inputs $\asf_1,\hdots,\asf_n\in\mathsf{A}_1$,
$\msf_n(\asf_1,\hdots,\asf_n)\in\mathsf{A}_2$ for all $n\ge1$.
In the setup $\mathsf{A}=\mathsf{A}_0\oplus\hdots\oplus\mathsf{A}_3$ below,
it follows at once that on homogeneous inputs $\bsf_i\in\mathsf{A}_{d_i}$ with all $d_i\geq 1$, if either
\begin{enumerate}
\item some $d_i\ge3$ or
\item there are two distinct  $i_1$ and $i_2$ with  $d_{i_1}=d_{i_2}=2$
\end{enumerate}
then $\msf_n(\bsf_1,\hdots,\bsf_n)=0$.
\end{remark}

\begin{notation}
If $\asf\in\scrC$ with $\dd(\asf)=0$,  write $[\asf]$ for $\asf$ viewed in cohomology $\mathrm{H}^*(\scrC)=\mathsf{A}$.
To implement Kadeishvili's construction we must, once and for all, choose (closed) elements $\{b_i\}$ of $\scrC$ which descend to a basis $\{[b_i]\}$ of $\mathsf{A}$. This defines an injective map of vector spaces
\[
\upiota\colon\mathsf{A} \longrightarrow \scrC
\]
sending $[b_i]\mapsto b_i$, which is a quasi-isomorphism of complexes of vector spaces (where $\mathsf{A}$ has trivial boundary maps).
We make this choice now, using the generators of \ref{rem:shapes2}:
\[
\upiota\colon\qquad
[1_\scrC]\mapsto 1_\scrC,\quad
\begin{array}{l} 
[\xsf]\mapsto\xsf \\
{}[\ysf]\mapsto\ysf
\end{array},\quad
\begin{array}{l} 
[\Xsf]\mapsto\Xsf \\
{}[\Ysf]\mapsto\Ysf
\end{array},\quad
[\upxi]\mapsto\upxi,
\]
where recall $1_\scrC=(1,1,0)$. For inductive purposes, set $\fsf_1=\upiota$.
\end{notation}

The task is to construct all the higher products $\msf_n\colon \mathsf{A}^{\otimes n}\to \mathsf{A}$.
The construction works inductively, constructing $\msf_n$ and $\fsf_n$ in tandem, employing the following auxiliary sequence $\Usf_n$.
Removing the $k=0$, $j=n$ term from \cite[(2)]{Kadeishvili}, following \cite{Kadeishvili}\footnote{\cite{Kadeishvili} writes instead $+\sum(-1)^{k+1}$, which is equivalent, and also writes the second sum $\sum_{j=2}^{n-1}$, with the convention that terms are zero when they do not make sense.} for any $n\ge2$
define $\Usf_n\colon\mathsf{A}^{\otimes n}\to \scrC$ in terms of already defined $\fsf_{<n}$ and $\msf_{<n}$ by
\begin{equation}
\begin{aligned}
&\Usf_n(\asf_1,\hdots,\asf_n)\colonequals \sum_{\ell=1}^{n-1}(-1)^{|\fsf_\ell(\asf_1,\hdots,\asf_\ell)|+1}\,\fsf_\ell(\asf_1,\hdots,\asf_\ell)\star\fsf_{n-\ell}(\asf_{\ell+1},\hdots,\asf_{n})\\
&\qquad\qquad-\sum_{k=0}^{n-2}\sum_{j=2}^{\mathfrak{m}_{n,k}}(-1)^{k}\,\fsf_{n-j+1}(\hat{\asf}_1,\hdots,\hat{\asf}_k,\msf_j(\asf_{k+1},\hdots,\asf_{k+j}),\asf_{k+j+1},\hdots,\asf_n)
\end{aligned}\label{eqn:defineU}
\end{equation}
where $\mathfrak{m}_{n,k}=\mathrm{min}\{n-k,n-1\}$. The condition \cite[($2^\prime$)]{Kadeishvili} that the $\fsf_n$ determine an $\Ainf$-morphism is 
\begin{equation}
(\upiota \msf_n-\Usf_n)(\asf_1,\hdots,\asf_n)=\dd\fsf_n(\asf_1,\hdots,\asf_n).\label{eqn:mUf}
\end{equation}
As explained in more detail below, passing to cohomology by applying $[-]$, equation \eqref{eqn:mUf} defines $\msf_n$, which in turn defines $\upiota\msf_n$, which in turn (up to choice) determines $\fsf_n$.

\begin{notation}\label{notation:coho}
For closed $\csf_i\in\scrC$, to ease notation throughout we will write $\msf_n(\csf_{1},\hdots,\csf_{n})$ as shorthand for $\msf_n([\csf_{1}],\hdots,[\csf_n])$, and similarly for $\fsf_n$ and $\Usf_n$. In other words, since the inputs to $\msf_n$, $\fsf_n$ and $\Usf_n$ must be cohomology classes, our convention is that should inputs not be this, make them so.  This is a mild abuse of notation, but it is unambiguous.
\end{notation}

Concretely, the above translates into the following.  For  $\redbullet,\bluebullet\in\{\xsf,\ysf,\Xsf,\Ysf,\upxi\}$,
by the above abuse of notation
\begin{equation*}
\Usf_2(\redbullet,\bluebullet)\colonequals (-1)^{|\upiota(\redbullet)|+1}\upiota(\redbullet)\star\upiota(\bluebullet) = (-1)^{|\!\redbullet\!|+1}(\redbullet \star \bluebullet).\label{eqn:U2}
\end{equation*}
The right hand side is (closed and) determined by $\fsf_1$, and thus so are both $\Usf_2(\redbullet,\bluebullet)$ and $[\Usf_2(\redbullet,\bluebullet)]$.  
Using \eqref{eqn:mUf} defines $\msf_2(\redbullet,\bluebullet)$, as expressing this known $[\Usf_2(\redbullet,\bluebullet)]$ in terms of the chosen basis $[b_i]$, there exists scalars $\upalpha_i$ such that
\[
\msf_2(\redbullet,\bluebullet)\stackrel{\scriptstyle\eqref{eqn:mUf}}{=}[\Usf_2(\redbullet,\bluebullet)]=\sum\upalpha_i [b_i].
\]
Applying $\upiota$ it follows that $\upiota\msf_2(\redbullet,\bluebullet)=\sum\upalpha_i b_i$, and so on the chain level \eqref{eqn:mUf} is simply
\begin{equation}
\sum\upalpha_i b_i-(-1)^{|\!\redbullet\!|+1}(\redbullet \star \bluebullet)=\dd\fsf_2(\redbullet,\bluebullet).\label{eqn:definef2}
\end{equation}
The left hand side is all determined, so determines some choice $\fsf_2(\redbullet,\bluebullet)\colon\mathsf{A}^{\otimes 2}\to\scrC$.  It should be noted that $\fsf_2$ is not unique, but some choice of $\fsf_2(\redbullet,\bluebullet)$ satisfying \eqref{eqn:definef2} can always be made.  In the course of the proof below we will use this freedom explicitly, see  \ref{lem:Uis0} and (0) in the proof of \ref{thm:Umaintext}.

We next move to determine $\msf_3$ and $\fsf_3$.  Now by definition
\begin{equation}
\begin{aligned}
\Usf_3(\redbullet,\bluebullet,\greenbullet) \colonequals&\ \phantom{+}(-1)^{|\!\redbullet\!|+1} \redbullet\star\, \fsf_2(\bluebullet,\greenbullet) + (-1)^{|\!\redbullet\!|+|\!\bluebullet\!|} \fsf_2(\redbullet,\bluebullet)\star \greenbullet\\
&+ (-1)^{|\!\redbullet\!|}\fsf_2(\redbullet,\msf_2(\bluebullet,\greenbullet)) - \fsf_2(\msf_2(\redbullet,\bluebullet),\greenbullet),
\end{aligned}\label{eqn:U3}
\end{equation}
which is fully determined by the previous data of $\msf_2$ and $\fsf_2$. Expressing this known cohomology class $[\Usf_3(\redbullet,\bluebullet,\greenbullet) ]$ in terms of the basis $[b_i]$, there exists scalars $\upbeta_i$ such that
\[
\msf_3(\redbullet,\bluebullet,\greenbullet) \stackrel{\scriptstyle \eqref{eqn:mUf}}{=}[\Usf_3(\redbullet,\bluebullet,\greenbullet)]=\sum\upbeta_i [b_i].
\]
Applying $\upiota$ it follows that $\upiota\msf_3(\redbullet,\bluebullet,\greenbullet)=\sum\upbeta_i b_i$, and so on the chain level \eqref{eqn:mUf} is simply
\begin{equation}
\sum\upbeta_i b_i-\Usf_3(\redbullet,\bluebullet,\greenbullet)=\dd\fsf_3(\redbullet,\bluebullet,\greenbullet).\label{eqn:definef3}
\end{equation}
Again, since the left hand side is all determined, this gives some non-unique choice of $\fsf_3(\redbullet,\bluebullet,\greenbullet)\colon\mathsf{A}^{\otimes 3}\to\scrC$.  

The calculation continues in this manner.  The chain-level $\Usf_n$ is determined by smaller $\msf_i$ and $\fsf_i$.  This then determines $[\Usf_n]$, which can be written in terms of the basis $[b_i]$.  In turn, applying $[-]$ to \eqref{eqn:mUf}, the expression for $[\Usf_n]$ determines $\msf_n$ and thus $\upiota\msf_n$, which in turn by \eqref{eqn:mUf} gives some non-unique choice of $\fsf_n$.

\subsection{Degree One \texorpdfstring{$\msf_2$}{m2} Inputs}
This subsection calculates $\msf_2$ on all degree one inputs, namely combinations of $\xsf$ and $\ysf$.  The point is that this is described by a very particular linear combination of $\Xsf$ and $\Ysf$, up to some homotopies of the form $\dd(\ksf_i)$, plus in all cases some higher order term of the form $\dd(\esf)$.  This latter term will, in later subsections, allow all higher products $\msf_n$ to be computed inductively.

\begin{prop}\label{prop:m2chaindegone}
In $\scrC$, the following statements hold
\begin{align*}
\xsf\star\xsf&=
\uplambda_{30}\Xsf+\uplambda_{21}\Ysf +\dd(\uplambda_{12}\ksf_{\kx-2}+\uplambda_{03}\ksf_{\kx-3}+\esf_{3,0})\\
\xsf\star\ysf=\ysf\star\xsf&=
\uplambda_{21}\Xsf+\uplambda_{12}\Ysf+\dd(\uplambda_{30}\ksf_{\kx+1}+ \uplambda_{03}\ksf_{\kx-2}+\esf_{3,1})\\
\ysf\star\ysf&=
\uplambda_{12}\Xsf+\uplambda_{03}\Ysf +\dd(\uplambda_{30}\ksf_{\kx+2}+\uplambda_{21}\ksf_{\kx+1}+\esf_{3,2}).
\end{align*}
\end{prop}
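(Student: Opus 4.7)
The plan is to use \ref{lem:compdegone} to put each of the three products into chain-level form, and then decompose the polynomial coefficient via \eqref{eq:splitAgeq} into a ``leading'' part that is a polynomial in $a$ alone (namely $A_3$ times a small power of $a$) plus a ``tail'' involving $A_{\geq 4}$. The leading part will be fed into \ref{cor:Aintosum} to produce the $\Xsf$, $\Ysf$ and $\dd(\ksf_*)$ contributions, while the tail will be recognised as $\dd(\esf_{3,k})$ via \ref{cor:inductwithe} for the appropriate $k\in\{0,1,2\}$.

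Concretely, for $\xsf\star\xsf=(v_2^{\ordP-3}A\cdot\Zsf_1,\,b^2w_2^{\ordP-3}B\cdot\Zsf_2,\,0)$, I would apply \eqref{eq:splitAgeq} with $i=3$ on each chart to obtain
\[
\xsf\star\xsf=(A_3\cdot\Zsf_1,\,b^2B_3\cdot\Zsf_2,\,0)+(v_2^{\ordP-3}A_{\geq 4}\cdot\Zsf_1,\,b^2w_2^{\ordP-3}B_{\geq 4}\cdot\Zsf_2,\,0).
\]
The second summand is precisely $\dd(\esf_{3,0})$ by \ref{cor:inductwithe} with $(i,k)=(3,0)$. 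For the first, I would read off $A_3=\uplambda_{30}a^{\kx}+\uplambda_{21}a^{\kx-1}+\uplambda_{12}a^{\kx-2}+\uplambda_{03}a^{\kx-3}$ from \ref{def:AandB}, check via \ref{lem:AiBi1} that $b^2B_3=b^{\kx+\ky-1}A_3(b^{-1})$, and then invoke \ref{cor:Aintosum} with $(\upalpha_\kx,\upalpha_{\kx-1},\upalpha_{\kx-2},\upalpha_{\kx-3})=(\uplambda_{30},\uplambda_{21},\uplambda_{12},\uplambda_{03})$ to obtain the first displayed identity of the proposition. The other two identities will follow by the same recipe, with the leading polynomial replaced by $aA_3$ (respectively $a^2A_3$) and the tail cancelled by $\esf_{3,1}$ (respectively $\esf_{3,2}$); in both cases the coefficients of the shifted polynomial line up with the desired $\uplambda_{jk}\Xsf+\uplambda_{j'k'}\Ysf$ terms and eject the remaining $\uplambda$'s into exact $\dd(\ksf_*)$ pieces.

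There is no serious obstacle; the proof is essentially bookkeeping. The only delicate step is tracking $b$-powers carefully enough to verify both the pairing condition $g=b^{\kx+\ky-1}f(b^{-1})$ needed for \ref{cor:Aintosum} and the precise form of $\dd(\esf_{3,k})$ supplied by \ref{cor:inductwithe}, which ultimately rest on $v_2=aw_2$ together with \ref{lem:AiBi1}. When $\ordP>3$ the statement degenerates since $A_3=0$ and every $\uplambda_{jk}$ with $j+k=3$ vanishes, reducing each identity to $\xsf\star\xsf=\dd(\esf_{3,0})$ and its analogues, which the same argument still produces. This clean separation into a leading $A_3$-part (that generates the $\uplambda\Xsf+\uplambda\Ysf$ content via \ref{cor:Aintosum}) and an exact $\esf_{3,k}$-tail is the template I would expect to iterate in order to pin down the higher $\msf_n$ on degree-one inputs throughout \S\ref{sec:Ainfty}.
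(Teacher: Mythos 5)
Your proposal is correct and follows essentially the same route as the paper's proof: decompose the chain-level products of \ref{lem:compdegone} via \eqref{eq:splitAgeq} into an $A_3$-leading part plus an $A_{\geq 4}$-tail, absorb the tail as $\dd(\esf_{3,k})$ by \ref{cor:inductwithe}, and feed $A_3$, $aA_3$, $a^2A_3$ (with matching $b^2B_3$, $bB_3$, $B_3$) into \ref{cor:Aintosum} to extract the $\Xsf$, $\Ysf$ and $\dd(\ksf_*)$ terms. The only cosmetic difference is that the paper identifies $b^2B_3$ with $b^{\kx+\ky-1}A_3(b^{-1})$ directly from \ref{def:AandB} rather than citing \ref{lem:AiBi1}, which amounts to the same bookkeeping.
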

\begin{proof}
By \ref{rem:Ageq3}, $A=A_{\geq 3}$.  Applying \eqref{eq:splitAgeq} to $i=3$ gives $v_2^{\ordP-3}A_{\geq 3} = A_3 + v_2^{\ordP-3}A_{\geq 4}$, and so combining gives $v_2^{\ordP-3}A=A_3+v_2^{\ordP-3}A_{\geq 4}$.  Similarly $w_2^{\ordP-3}B=B_3+w_2^{\ordP-3}B_{\geq 4}$.

Thus, decomposing \ref{lem:compdegone} we may write
\begin{align*}
\xsf\star\xsf&=
\big(\,
\phantom{a^2}A_3\cdot\Zsf_1,
\,\,
b^2B_{3}\cdot\Zsf_2,
\,0 
\big)
+
\big(\, 
\phantom{a^2}v_2^{\ordP-3}A_{\geq 4}\cdot\Zsf_1,
\,\,
b^2w_2^{\ordP-3}B_{\geq 4}\cdot\Zsf_2,
\,0 
\big)\\
\xsf\star\ysf=\ysf\star\xsf&=
\big(\,
\phantom{{}^2}aA_3\cdot\Zsf_1,
\,\,
\phantom{{}^2}bB_{3}\cdot\Zsf_2,
\,0 
\big)
+
\big(\, 
\phantom{{}^2}av_2^{\ordP-3}A_{\geq 4}\cdot\Zsf_1,
\,\,
\phantom{{}^2}bw_2^{\ordP-3}B_{\geq 4}\cdot\Zsf_2,
\,0 
\big)\\
\ysf\star\ysf&=
\big(\,
a^2A_3\cdot\Zsf_1,
\,\,
\phantom{b^2}B_{3}\cdot\Zsf_2,
\,0 
\big)
+
\big(\, 
a^2v_2^{\ordP-3}A_{\geq 4}\cdot\Zsf_1,
\,\,
\phantom{b^2}w_2^{\ordP-3}B_{\geq 4}\cdot\Zsf_2,
\,0 
\big).
\end{align*}
By \ref{cor:inductwithe}, the final terms can be rewritten, to give 
\begin{align*}
\xsf\star\xsf&=
\big(\,
\phantom{a^2}A_3\cdot\Zsf_1,
\,\,
b^2B_{3}\cdot\Zsf_2,
\,0 
\big)
+
\dd(\esf_{3,0})\\
\xsf\star\ysf=\ysf\star\xsf&=
\big(\,
\phantom{{}^2}aA_3\cdot\Zsf_1,
\,\,
\phantom{{}^2}bB_{3}\cdot\Zsf_2,
\,0 
\big)
+
\dd(\esf_{3,1})\\
\ysf\star\ysf&=
\big(\,
a^2A_3\cdot\Zsf_1,
\,\,
\phantom{b^2}B_{3}\cdot\Zsf_2,
\,0 
\big)
+
\dd(\esf_{3,2}).
\end{align*}
Now writing \ref{def:AandB} backwards
\begin{alignat*}{5}
A_3&=\uplambda_{03}a^{\kx-3}&&+\uplambda_{12}a^{\kx-2}&&+\uplambda_{21}a^{\kx-1}&&+\uplambda_{30}a^{\kx}\\
aA_3&=\uplambda_{03}a^{\kx-2}&&+\uplambda_{12}a^{\kx-1}&&+\uplambda_{21}a^{\kx}&&+\uplambda_{30}a^{\kx+1}\\
a^2A_3&=\uplambda_{03}a^{\kx-1}&&+\uplambda_{12}a^{\kx}&&+\uplambda_{21}a^{\kx+1}&&+\uplambda_{30}a^{\kx+2}.
\end{alignat*}
For each of these three polynomials in $a$, consider the corresponding polynomial $g\in\mathbb{K}[b]$ defined in \ref{cor:Aintosum}.  By definition \ref{def:AandB}, for the top polynomial this is $b^2B_3$, for the middle this is $bB_3$, and for the bottom this is $B_3$.  Hence, by \ref{cor:Aintosum} we may write 
\begin{alignat*}{5}
\big(\,\phantom{a^2}A_3\cdot\Zsf_1,
\,\,b^2B_{3}\cdot\Zsf_2,
\,0 \big)
&=\uplambda_{03}\dd(\ksf_{\kx-3})&&+\uplambda_{12}\dd(\ksf_{\kx-2})&&+\uplambda_{21}\Ysf&&+\uplambda_{30}\Xsf\\
\big(\,\phantom{{}^2}aA_3\cdot\Zsf_1,
\,\,\phantom{{}^2}bB_{3}\cdot\Zsf_2,\,0 \big)
&=\uplambda_{03}\dd(\ksf_{\kx-2})&&+\uplambda_{12}\Ysf&&+\uplambda_{21}\Xsf&&+\uplambda_{30}\dd(\ksf_{\kx+1})\\
\big(\,a^2A_3\cdot\Zsf_1,
\,\,\phantom{b^2}B_{3}\cdot\Zsf_2,\,0 \big)
&=\uplambda_{03}\Ysf&&+\uplambda_{12}\Xsf&&+\uplambda_{21}\dd(\ksf_{\kx+1})&&+\uplambda_{30}\dd(\ksf_{\kx+2}),
\end{alignat*}
and the statement follows. 
\end{proof}

In particular, in the notation of \S\ref{subsec:MinModelRecap}, since on degree one inputs $\Usf_2(\redbullet,\bluebullet)=\redbullet\star\bluebullet$, applying $[-]$ then $\upiota$ to \ref{prop:m2chaindegone} shows that 
\begin{equation}
\begin{aligned}
\upiota\msf_2(\xsf,\xsf)&=
\uplambda_{30}\Xsf+\uplambda_{21}\Ysf \\
\upiota\msf_2(\xsf,\ysf)=\upiota\msf_2(\ysf,\xsf)&=
\uplambda_{21}\Xsf+\uplambda_{12}\Ysf\\
\upiota\msf_2(\ysf,\ysf)&=
\uplambda_{12}\Xsf+\uplambda_{03}\Ysf.
\end{aligned}\label{eqn:iotam2}
\end{equation}
Substituting \eqref{eqn:iotam2} and \ref{prop:m2chaindegone} directly into the left side of \eqref{eqn:definef2}, it follows that we may choose
\begin{equation}
\begin{aligned}
\fsf_2(\xsf,\xsf)&=-(\uplambda_{12}\ksf_{\kx-2}+\uplambda_{03}\ksf_{\kx-3}+\esf_{3,0})\\
\fsf_2(\xsf,\ysf)=\fsf_2(\ysf,\xsf)&=-(\uplambda_{30}\ksf_{\kx+1}+ \uplambda_{03}\ksf_{\kx-2}+\esf_{3,1})\\
\fsf_2(\ysf,\ysf)&=-(\uplambda_{30}\ksf_{\kx+2}+\uplambda_{21}\ksf_{\kx+1}+\esf_{3,2}).
\end{aligned}\label{f2degone}
\end{equation}

\subsection{All \texorpdfstring{$\msf_2$}{m2} Products}
By Remark \ref{rem:mndeg3inputs}, when all inputs have degree one or higher, 
if one of the inputs for $\msf_2$ has degree three, or if both have degree two, then the product 
is necessarily zero.
Thus the only remaining $\msf_2$ to consider are those having
one input of degree one, and one input of degree two. 
\begin{lemma}\label{lem:m2chaindegall} 
In $\scrC$, the following statements hold.
\begin{align*}
\xsf\star\Xsf&=-\upxi= \Xsf\star\xsf & \xsf\star\Ysf&=-\dd(\Ksf_{\kx-1})=\Ysf\star\xsf\\
\ysf\star\Ysf&=-\upxi= \Ysf\star\ysf & \ysf\star\Xsf&=-\dd(\Ksf_{\kx+1})=\Xsf\star\ysf
\end{align*}
\end{lemma}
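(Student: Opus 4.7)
The plan is to observe that all four products reduce to a single invocation of \ref{lem:Deg2starx}, because both $\Xsf$ and $\Ysf$ are by construction \ref{def:XY} triples of the form $(a^i\cdot\Zsf_1,\,b^j\cdot\Zsf_2,\,0)$. Consequently $\redbullet\star\bluebullet$ and $\bluebullet\star\redbullet$ agree automatically (this is already built into the right-hand equalities of \ref{lem:Deg2starx}), so only one of the two orderings needs to be computed in each case; everything else is a matter of matching indices and then identifying the resulting triple (which is based on the shape $\mathfrak{s}$) either with $\upxi$ or with some $\dd(\Ksf_i)$.

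For the diagonal entries, I would apply \ref{lem:Deg2starx} to $\xsf\star\Xsf$ with $(i,j)=(\kx,\ky-1)$, which gives $-(a^{\kx}\cdot\mathfrak{s}_1,\,b^{\ky}\cdot\mathfrak{s}_2,\,0)$; this is precisely $-\upxi$ by \ref{def:s}. The symmetric computation for $\ysf\star\Ysf$, using $(i,j)=(\kx-1,\ky)$, lands on exactly the same triple $-(a^{\kx}\cdot\mathfrak{s}_1,\,b^{\ky}\cdot\mathfrak{s}_2,\,0)=-\upxi$. In both cases \ref{lem:Deg2starx} simultaneously gives the reversed product, so $\Xsf\star\xsf=-\upxi$ and $\Ysf\star\ysf=-\upxi$ are free.

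For the off-diagonal entries the triples produced by \ref{lem:Deg2starx} still have shape $\mathfrak{s}$, but the exponents on $a$ and $b$ are no longer those of $\upxi$; instead, they match the output of \ref{lem:effectAlphaBeta}. Explicitly, $\xsf\star\Ysf$ with $(i,j)=(\kx-1,\ky)$ yields $-(a^{\kx-1}\cdot\mathfrak{s}_1,\,b^{\ky+1}\cdot\mathfrak{s}_2,\,0)$, and since $\kx+\ky-(\kx-1)=\ky+1$, this is exactly $-\dd(\Ksf_{\kx-1})$ by \ref{lem:effectAlphaBeta}. Similarly $\ysf\star\Xsf$ with $(i,j)=(\kx,\ky-1)$ gives $-(a^{\kx+1}\cdot\mathfrak{s}_1,\,b^{\ky-1}\cdot\mathfrak{s}_2,\,0)=-\dd(\Ksf_{\kx+1})$.

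There is no real obstacle: the entire proof is bookkeeping on the triples, with the only mild point being a sanity check that the indices $\kx-1$ and $\kx+1$ lie in the valid ranges from \ref{def:Ki} (so that $\Ksf_{\kx-1}$ and $\Ksf_{\kx+1}$ are defined), which follows from the setup. Worth emphasising that no matrix multiplication needs to be redone — everything is extracted directly from \ref{lem:Deg2starx}, \ref{def:XY}, \ref{def:s}, and \ref{lem:effectAlphaBeta}.
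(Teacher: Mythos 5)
Your proposal is correct and follows essentially the same route as the paper's own proof: apply \ref{lem:Deg2starx} to $\Xsf=(a^{\kx}\cdot\Zsf_1,\,b^{\ky-1}\cdot\Zsf_2,\,0)$ and $\Ysf=(a^{\kx-1}\cdot\Zsf_1,\,b^{\ky}\cdot\Zsf_2,\,0)$, then identify the resulting $\mathfrak{s}$-shaped triples with $-\upxi$ via \ref{def:s} and with $-\dd(\Ksf_{\kx-1})$, $-\dd(\Ksf_{\kx+1})$ via \ref{lem:effectAlphaBeta}. The index bookkeeping you carry out matches the paper's computation exactly, and the equality of the two orderings is, as you say, already contained in \ref{lem:Deg2starx}.
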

\begin{proof}
Since $\Ysf=(a^{\kx-1}\cdot \Zsf_{1}, b^{\ky}\cdot\Zsf_{2}, 0)$ and $\Xsf=(a^{\kx}\cdot \Zsf_{1}, b^{\ky-1}\cdot\Zsf_{2}, 0)$, it follows directly from \ref{lem:Deg2starx} that
\begin{align*}
\xsf\star\Xsf&=-(a^{\kx}\cdot \mathfrak{s}_{1}, \,\,b^{\ky}\cdot\mathfrak{s}_{2}, 0)= \Xsf\star\xsf & \xsf\star\Ysf&=-(a^{\kx-1}\cdot \mathfrak{s}_{1}, \,\,b^{\ky+1}\cdot\mathfrak{s}_{2}, 0) =\Ysf\star\xsf\\
\ysf\star\Ysf&=-(a^{\kx}\cdot \mathfrak{s}_{1}, \,\,b^{\ky}\cdot\mathfrak{s}_{2}, 0)= \Ysf\star\ysf & \ysf\star\Xsf&=-(a^{\kx+1}\cdot \mathfrak{s}_{1}, \,\,b^{\ky-1}\cdot\mathfrak{s}_{2}, 0)=\Xsf\star\ysf.
\end{align*}
Now the statement follows since $\upxi= (a^{\kx}\cdot \mathfrak{s}_{1}, b^{\ky}\cdot\mathfrak{s}_{2},0)$, and  \ref{lem:effectAlphaBeta}.
\end{proof}
Now $\msf_2(\redbullet,\bluebullet)=(-1)^{|\!\redbullet\!|+1}[\redbullet \star \bluebullet]$, so applying $[-]$ then $\upiota$ to \ref{lem:m2chaindegall}  gives
\begin{equation}
\begin{aligned}
\upiota\msf_2(\xsf,\Xsf) &= -\upxi  &\qquad&& \upiota\msf_2(\xsf,\Ysf) &=0_\scrC \\
\upiota\msf_2(\Xsf,\xsf) &= \phantom{-}\upxi  &\qquad&&  \upiota\msf_2(\Ysf,\xsf) &= 0_\scrC  \\
\upiota\msf_2(\ysf,\Ysf) &= -\upxi  &\qquad&&\upiota\msf_2(\ysf,\Xsf) &= 0_\scrC  \\
\upiota\msf_2(\Ysf,\ysf) &= \phantom{-}\upxi  &\qquad&&\upiota\msf_2(\Xsf,\ysf) &= 0_\scrC. 
\end{aligned}\label{eqn:iotam2degall}
\end{equation}
Substituting \eqref{eqn:iotam2degall} and \ref{lem:m2chaindegall} directly into the left hand side of \eqref{eqn:definef2}, it follows that
\begin{equation*}
\begin{aligned}
0_\scrC &= \dd\fsf_2(\xsf,\Xsf)  &\quad&& \phantom{-}\dd(\Ksf_{r-1}) &=\dd\fsf_2(\xsf,\Ysf) \\
0_\scrC &=\dd\fsf_2(\Xsf,\xsf)   &\quad&&  -\dd(\Ksf_{r-1}) &=\dd\fsf_2(\Ysf,\xsf)  \\
0_\scrC &=\dd\fsf_2(\ysf,\Ysf)   &\quad&&  \phantom{-}\dd(\Ksf_{\kx+1}) &=\dd\fsf_2(\ysf,\Xsf) \\
0_\scrC &=\dd\fsf_2(\Ysf,\ysf)   &\quad&&  -\dd(\Ksf_{\kx+1}) &=\dd\fsf_2(\Xsf,\ysf),
\end{aligned}
\end{equation*}
thus we may choose 
\begin{equation}
\begin{aligned}
\fsf_2(\xsf,\Xsf) &= 0_\scrC &  \fsf_2(\ysf,\Ysf) &= 0_\scrC&   \fsf_2(\xsf,\Ysf) &= \Ksf_{\kx-1}  &  \fsf_2(\ysf,\Xsf) &=\Ksf_{\kx+1} \\
\fsf_2(\Xsf,\xsf) &= 0_\scrC &  \fsf_2(\Ysf,\ysf) &= 0_\scrC&  \fsf_2(\Ysf,\xsf) &= -\Ksf_{\kx-1}    &  \fsf_2(\Xsf,\ysf) &= -\Ksf_{\kx+1}.
\end{aligned}\label{eqn:f2degall}
\end{equation}

\subsection{Simple Maps and Consequences of Degree}\label{sec:fn0bydegree}
As the collection of $\asf_1\otimes\hdots\otimes\asf_n$ with each $\asf_i\in\{\xsf,\ysf,\Xsf,\Ysf,\upxi\}$
is linearly independent in the tensor algebra $\mathrm{T}_\mathbb{K}(\mathsf{A})$, the map $\fsf_n$ may be defined by making
a choice for each $\fsf_n(\asf_1,\hdots,\asf_n)$ independently, compatible with \eqref{eqn:mUf}.  The following is clear, and is well-known.

\begin{lemma}\label{lem:Uis0}
Suppose that $\fsf_i$ is defined for $1\le i\le n-1$, and further that the defining equation
\eqref{eqn:defineU} gives $\Usf_n(\asf_1,\hdots,\asf_n)=0_\scrC$ where $\asf_1,\hdots,\asf_n\in\{\xsf,\ysf,\Xsf,\Ysf,\upxi\}$.
Then $\msf_n(\asf_1,\hdots,\asf_n)=0_\mathsf{A}$ and we may choose the map $\fsf_n$ so that
$\fsf_n(\asf_1,\hdots,\asf_n)=0_\scrC$.
\end{lemma}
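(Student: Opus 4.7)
The plan is to apply the Kadeishvili recipe as recalled in \S\ref{subsec:MinModelRecap} essentially verbatim. First I take cohomology classes in equation \eqref{eqn:mUf}, which since $\msf_1=0$ gives the identity $\msf_n(\asf_1,\hdots,\asf_n)=[\Usf_n(\asf_1,\hdots,\asf_n)]$ in $\mathsf{A}$. The hypothesis says the chain-level element $\Usf_n(\asf_1,\hdots,\asf_n)$ is already $0_\scrC$, so its cohomology class is $0_\mathsf{A}$, proving $\msf_n(\asf_1,\hdots,\asf_n)=0_\mathsf{A}$ at once.

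Next I feed this back into \eqref{eqn:mUf} on the chain level: both $\upiota\msf_n(\asf_1,\hdots,\asf_n)=0_\scrC$ and $\Usf_n(\asf_1,\hdots,\asf_n)=0_\scrC$, so the equation becomes $0_\scrC=\dd\fsf_n(\asf_1,\hdots,\asf_n)$. The definition of $\fsf_n$ is only pinned down up to a choice of a preimage under $\dd$ of the left-hand side of \eqref{eqn:mUf}; since here that left-hand side is $0_\scrC$, the zero chain is a valid preimage. Thus the choice $\fsf_n(\asf_1,\hdots,\asf_n)=0_\scrC$ is consistent with the Kadeishvili construction.

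Finally, as noted just before the statement, the tensors $\asf_1\otimes\hdots\otimes\asf_n$ with $\asf_i\in\{\xsf,\ysf,\Xsf,\Ysf,\upxi\}$ form part of a basis of $\mathsf{A}^{\otimes n}$, so this choice on the single basis element does not constrain the values of $\fsf_n$ on any other tensor; the map $\fsf_n$ can therefore be extended, by independent choices on the remaining basis elements, into a globally defined map $\mathsf{A}^{\otimes n}\to\scrC$ satisfying \eqref{eqn:mUf}. There is no genuine obstacle: the lemma is a direct bookkeeping consequence of the fact that the Kadeishvili algorithm only prescribes $\fsf_n$ up to the choice of a $\dd$-primitive, and $0_\scrC$ is the canonical primitive of $0_\scrC$.
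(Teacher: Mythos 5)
Your argument is correct and is essentially the paper's own proof: both establish $\msf_n(\asf_1,\hdots,\asf_n)=[\Usf_n(\asf_1,\hdots,\asf_n)]=0_\mathsf{A}$ from \eqref{eqn:mUf} (the paper phrases this via injectivity of $\upiota$, you via passing to cohomology, which amounts to the same thing), and then observe that $\dd(0_\scrC)=0_\scrC=(\upiota\msf_n-\Usf_n)(\asf_1,\hdots,\asf_n)$ makes $\fsf_n(\asf_1,\hdots,\asf_n)=0_\scrC$ a valid choice. Your closing remark about extending independently over the basis tensors is the same point the paper makes just before the lemma, so nothing is missing.
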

\begin{proof}
Since $\upiota$ is injective, \eqref{eqn:mUf} is satisfied only if
$\msf_n(\asf_1,\hdots,\asf_n) = [\Usf_n(\asf_1,\hdots,\asf_n)] = 0_\mathsf{A}$.
Since $\dd(0_\scrC) = 0_\scrC = (\upiota\msf_n-\Usf_n)(\asf_1,\hdots,\asf_n)$,
the choice $\fsf_n(\asf_1,\hdots,\asf_n)\colonequals 0_\scrC$ is compatible with \eqref{eqn:mUf}.
\end{proof}

\begin{convention}\label{convention}
If $\Usf_n(\asf_1,\hdots,\asf_n)=0_\scrC$, we choose $\fsf_n$ so that
$\fsf_n(\asf_1,\hdots,\asf_n)=0_\scrC$.
\end{convention}

Usually controlling $\Usf_n$ and $\fsf_n$ is hard, but properties of both will be significantly simplified in our setting by the following.

\begin{defin}
We say that a collection of maps $\left\{\fsf_\ell\colon\mathsf{A}^{\otimes \ell}\rightarrow\scrC\right\}_{\ell=1}^k$ is
{\em simple} if
for all $1\le\ell\le k$, and for all inputs $\asf_i\in\{\xsf,\ysf,\Xsf,\Ysf,\upxi\}$,
\[
\fsf_\ell(\asf_1,\hdots,\asf_\ell) = (\Ssf,\Tsf,0)\in\scrC_{d_1+\cdots+d_\ell + 1 - \ell}
\]
for some chains $\Ssf,\Tsf$ that depend on $\ell$ and the $\asf_i$, where $d_i=|\asf_i|$.
\end{defin}
It will turn out, as part of our inductive process, that we may choose the $\fsf_n$ to be simple to any degree. As such, later we will require the following two results.
\begin{lemma}\label{lem:fstarf}
Fix $n\ge2$. Suppose that the set of maps $\left\{\fsf_1,\hdots,\fsf_{n-1}\right\}$ is simple
and consider inputs $\asf_i\in\{\xsf,\ysf,\Xsf,\Ysf,\upxi\}$ for $1\le i\le n$ satisfying $\sum (|\asf_i|-1) \ge 2$.  Then the following statements hold.
\begin{enumerate}
\item\label{lem:fstarf1}
For all $\ell$ such that $1\le\ell\le n-1$,
\[
\fsf_\ell(\asf_1,\hdots,\asf_\ell)\star\fsf_{n-\ell}(\asf_{\ell+1},\hdots,\asf_{n}) = 0_\scrC.
\]
\item\label{cor:U0f0}
$\Usf_n(\asf_{1},\hdots,\asf_{n}) = 0_\scrC$.
\end{enumerate}
\end{lemma}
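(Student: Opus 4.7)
The plan is to reduce both statements to a pure degree count, using that the underlying locally free resolution $\scrE$ has length three. Writing $d_i\colonequals|\asf_i|$, the hypothesis $\sum(d_i-1)\geq 2$ is equivalent to the bound that the ``output degree'' $\sum d_i + 2 - n = \sum(d_i-1)+2$ is at least $4$. Since $\scrE_n\neq 0$ only for $n\in\{0,1,2,3\}$, the graded piece $\scrHom^i(\scrE,\scrE)=\bigoplus_n \scrHom(\scrE_n,\scrE_{n-i})$ vanishes for $i\geq 4$. Hence any element of $\scrC$ in total degree $\geq 4$ whose third (\v{C}ech) component is zero must itself be zero.

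For \eqref{lem:fstarf1}, simplicity of the $\fsf_\ell$ gives $\fsf_\ell(\asf_1,\hdots,\asf_\ell)=(\Ssf,\Tsf,0)$ and $\fsf_{n-\ell}(\asf_{\ell+1},\hdots,\asf_n)=(\Ssf',\Tsf',0)$. The definition of $\star$ in \eqref{eqn:star} with both third entries zero yields
\[
\fsf_\ell(\asf_1,\hdots,\asf_\ell)\star\fsf_{n-\ell}(\asf_{\ell+1},\hdots,\asf_n)=(\Ssf\circ\Ssf',\,\Tsf\circ\Tsf',\,0),
\]
a homogeneous element of total degree $\sum(d_i-1)+2\ge 4$. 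By the degree observation above, both $\Ssf\circ\Ssf'$ and $\Tsf\circ\Tsf'$ vanish.

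For \eqref{cor:U0f0}, examine each summand of $\Usf_n$ as defined in \eqref{eqn:defineU}. The first sum is a $\K$-linear combination of products $\fsf_\ell(\asf_1,\hdots,\asf_\ell)\star\fsf_{n-\ell}(\asf_{\ell+1},\hdots,\asf_n)$, and each such product vanishes by \eqref{lem:fstarf1}. For the second sum, the inputs to $\fsf_{n-j+1}$ are $\hat\asf_1,\hdots,\hat\asf_k$, then $\msf_j(\asf_{k+1},\hdots,\asf_{k+j})$, then $\asf_{k+j+1},\hdots,\asf_n$. Since each $|\asf_i|\ge 1$ and $\msf_j$ has degree $2-j$, the class $\msf_j(\asf_{k+1},\hdots,\asf_{k+j})$ lies in $\mathsf{A}_{\ge 2}$, so is a linear combination of the basis vectors $[\Xsf],[\Ysf],[\upxi]$ (in particular the identity $1_\mathsf{A}$ does not appear). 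By $\K$-linearity of $\fsf_{n-j+1}$ and simplicity, each corresponding term is a sum of elements of the form $(\Ssf,\Tsf,0)$ of total degree $\sum(d_i-1)+2\ge 4$, and again vanishes by the degree observation.

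The main obstacle is simply bookkeeping: one must verify that simplicity transports through the linear decomposition of $\msf_j(\asf_{k+1},\hdots,\asf_{k+j})$ into basis elements, which requires ruling out a $1_\mathsf{A}$ contribution in that decomposition. This is exactly the reason for the mild degree lower bound $|\asf_i|\ge 1$ on all inputs.
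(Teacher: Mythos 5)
Your proposal is correct and follows essentially the same route as the paper: simplicity forces every relevant factor and every term of $\Usf_n$ to have the form $(\Ssf,\Tsf,0)$ in total degree $\sum d_i+2-n\ge 4$, and since $\scrC_4=0\oplus 0\oplus\vC^1(\scrU,\scrHom^3(\scrE,\scrE))$ while $\scrC_{\ge 5}=0$, such elements vanish. Your extra observation that $\msf_j(\asf_{k+1},\hdots,\asf_{k+j})\in\mathsf{A}_{\ge 2}$ (so no $1_\mathsf{A}$ component arises and simplicity applies by linearity) is a small point of rigor the paper leaves implicit, but it does not change the argument.
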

\begin{proof}
(1) Set $d_i=|\asf_i|$.  Each map $\fsf_n$ has degree $1-n$, so that
\[
\fsf_\ell(\asf_1,\hdots,\asf_\ell) \in \scrC_{d_1+\cdots+d_\ell + 1 - \ell}
\quad\textrm{and}\quad
\fsf_{n-\ell}(\asf_{\ell+1},\hdots,\asf_{n}) \in \scrC_{d_{\ell+1}+\cdots+d_n + 1 - (n-\ell)}.
\]
By assumption, both $\fsf_\ell$ and $\fsf_{n-\ell}$ are simple, and so multiplying these two chains together by $\star$ still gives an element of the form $(\Ssf,\Tsf,0)\in\scrC_d$,
where 
\[
d\colonequals (d_1+\hdots+d_\ell + 1 - \ell) + (d_{\ell+1}+\cdots+d_n + 1 - (n-\ell)) = 2 - n + \sum d_i.
\]
If $d\ge4$, then trivially $\Ssf=\Tsf=0$ since by definition $\scrC_4=0\oplus 0\oplus \vC^1(\scrU,\scrHom^3(\scrE,\scrE))$ and $\scrC_{\geq 5}=0$.  Since $d\ge4$ is equivalent to $\sum (d_i-1) \ge 2$, the claim follows.\\
(2)
By \eqref{lem:fstarf1}, the top right sum in \eqref{eqn:defineU} is zero.
Now again using the assumption that the $\fsf_i$ are all simple, any term of the bottom line of \eqref{eqn:defineU} is of the form
\[
\fsf_{n-j+1}(\hat{\asf}_1,\hdots,\hat{\asf}_k,\msf_j(\asf_{k+1},\hdots,\asf_{k+j}),\asf_{k+j+1},\hdots,\asf_n) = (\Ssf,\Tsf,0) \in\scrC_{e}
\]
where $e=\sum d_i + 2 - n$. Since by hypothesis $\sum d_i\ge n+2$, necessarily $e\ge4$ and thus again $\Ssf=\Tsf=0$.
\end{proof}

Whilst \ref{lem:fstarf} deals with $\sum (|\asf_i|-1) \ge 2$, the case $\sum (|\asf_i|-1) =1$ is mildly more tricky, and requires more assumptions.

\begin{prop}\label{prop:Un0}
For $n\ge3$, suppose that the set of maps $\left\{\fsf_1,\hdots,\fsf_{n-1}\right\}$ are simple, and have been chosen to adhere to Convention~\textnormal{\ref{convention}}.
\begin{enumerate}
\item\label{prop:Un01}
Consider inputs $\asf_i\in\{\xsf,\ysf,\Xsf,\Ysf\}$ with $1\le i\le n$, where precisely one of the $\asf_i$ has degree two. Then
\begin{equation}\label{eqn:Un2}
\Usf_n(\asf_1,\hdots,\asf_n) =
 \sum_{\ell=1}^{n-1}(-1)^{|\fsf_\ell(\asf_1,\hdots,\asf_\ell)|+1}\,\fsf_\ell(\asf_1,\hdots,\asf_\ell)\star\fsf_{n-\ell}(\asf_{\ell+1},\hdots,\asf_{n}).
 \end{equation}
\item\label{prop:Un02}
Suppose in addition that 
\begin{enumerate}
\item
for any choices of $\bsf_*\in\{\xsf,\ysf\}$
for $2\le\ell\le n-1$ each $\fsf_\ell(\bsf_1,\dots,\bsf_\ell)\in\scrC_1$ is
a $\K$-linear combination of $\ksf_i$ terms and $\esf_{j,j'}$ terms, and
\item
for $\bsf\in\{\xsf,\ysf\}$ and $\Bsf\in\{\Xsf,\Ysf\}$ both 
$\fsf_2(\bsf,\Bsf)$ and $\fsf_2(\Bsf,\bsf)$ are polynomial multiples of some $\Ksf_i$.
\end{enumerate}
Then $\Usf_n(\asf_1,\hdots,\asf_n) = 0_\scrC$.
\end{enumerate}
\end{prop}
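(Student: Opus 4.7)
The plan is to establish both parts by working directly with the defining equation \eqref{eqn:defineU} for $\Usf_n$, exploiting Convention~\ref{convention} together with Lemma~\ref{lem:fstarf}.

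For part~(1), the plan is to show that each summand in the second (inner-$\msf_j$) sum vanishes term by term. I would fix a contribution $\fsf_{n-j+1}(\hat{\asf}_1,\dots,\hat{\asf}_k,\msf_j(\asf_{k+1},\dots,\asf_{k+j}),\asf_{k+j+1},\dots,\asf_n)$ and track where the unique degree-two input sits. If it lies inside the inner $\msf_j$, then $\msf_j$ acts on $j-1$ degree-one inputs and one degree-two input, producing output of degree three; if it lies outside, $\msf_j$ has all degree-one inputs and gives a degree-two output, while a second degree-two slot remains among the outer inputs. In either case, after using multilinearity of $\fsf_{n-j+1}$ to reduce its middle slot to a basis element of $\mathsf{A}$, the inputs to $\fsf_{n-j+1}$ satisfy $\sum(|\mathrm{input}_i|-1)=2$. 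Lemma~\ref{lem:fstarf}\eqref{cor:U0f0} then forces $\Usf_{n-j+1}$ to vanish on these inputs, and since $n-j+1\le n-1$, Convention~\ref{convention} (to which the assumed simple $\fsf_i$ adhere) forces the chosen $\fsf_{n-j+1}$ to vanish as well. This kills every inner-$\msf_j$ term and leaves the stated first-sum formula.

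For part~(2), using part~(1) reduces the task to showing that each star product $\fsf_\ell(\asf_1,\dots,\asf_\ell)\star\fsf_{n-\ell}(\asf_{\ell+1},\dots,\asf_n)$ vanishes. The unique degree-two input lies on one side; by symmetry assume it is among the first $\ell$ inputs. When $\ell=1$, $\fsf_1$ outputs $\Xsf$ or $\Ysf$ while by hypothesis~(a) the simple $\fsf_{n-1}$ is a $\K$-linear combination of $\ksf_i$'s and $\esf_{j,k}$'s, and Lemma~\ref{lem:kstark} makes each product vanish. When $\ell=2$, hypothesis~(b) makes $\fsf_2$ a polynomial multiple of some $\Ksf_i$, and $\fsf_{n-2}$ is either $\xsf$ or $\ysf$ (if $n=3$) or again a combination of $\ksf$'s and $\esf$'s (if $n-2\ge 2$), so Lemma~\ref{lem:kstark} finishes. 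The remaining case $\ell\ge 3$ is handled by induction on $n$: the base $n=3$ is vacuous, and for $n\ge 4$ applying the proposition inductively at $n'=\ell<n$ gives $\Usf_\ell=0$ on one-degree-two inputs, whence Convention~\ref{convention} forces the chosen $\fsf_\ell=0$, making the star product zero.

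The main obstacle is precisely this $\ell\ge 3$ sub-case: hypotheses~(a) and~(b) are silent about $\fsf_\ell$ for $\ell\ge 3$ on mixed-degree inputs, and one must run the argument inductively in $n$, using Convention~\ref{convention} at each stage to promote vanishing of $\Usf_\ell$ to vanishing of the chosen representative $\fsf_\ell$. Once this inductive scaffolding is in place, the rest is a straightforward degree count combined with the elementary product vanishings collected in Lemma~\ref{lem:kstark}.
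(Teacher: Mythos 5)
Your proposal is correct and follows essentially the same route as the paper: part (1) by the degree count $\sum(|\cdot|-1)=2$ on the inputs of each inner-$\msf_j$ term, so Lemma~\ref{lem:fstarf}\eqref{cor:U0f0} plus Convention~\ref{convention} kills the second sum in \eqref{eqn:defineU}; part (2) by induction on $n$ with base case $n=3$, disposing of short blocks via hypotheses (a), (b) and Lemma~\ref{lem:kstark}, and of blocks of length $\ge 3$ containing the degree-two input via the inductive vanishing of $\Usf$ promoted to vanishing of $\fsf$ by Convention~\ref{convention}. The only cosmetic difference is that you organise the case analysis by the position of the degree-two input and block sizes (with a symmetry reduction), whereas the paper enumerates the individual terms, but the content is identical.
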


\begin{proof}
(1) The bottom line of \eqref{eqn:defineU} is a linear combination of terms of the form
$\fsf_\ell(\bsf_1,\hdots,\bsf_\ell)$ where $\bsf_i\in\{\xsf,\ysf,\Xsf,\Ysf\}$, $2\leq \ell\leq n-1$, and 
\[
|\bsf_1| + \hdots + |\bsf_\ell| = \ell + 2.
\]
By \ref{lem:fstarf}\eqref{cor:U0f0} applied to $\ell$, $\Usf_\ell(\bsf_1,\hdots,\bsf_\ell)=0_\scrC$.  By the assumption that Convention~\textnormal{\ref{convention}} holds, $\fsf_\ell(\bsf_1,\hdots,\bsf_\ell)=0_\scrC$. Thus the bottom line of \eqref{eqn:defineU} is zero, proving the claim.\\
(2) We work by induction on $n\ge3$. For the initial case $n=3$,
\[
\Usf_3(\asf_1,\asf_2,\asf_3) = \pm\asf_1\star\fsf_2(\asf_2,\asf_3) \pm\fsf_2(\asf_1,\asf_2)\star\asf_3.
\]
Term by term, if the inputs to $\fsf_2$ have degree one, then the singleton is $\Xsf$ or $\Ysf$
and so the term vanishes by assumption (a) and \ref{lem:kstark}. 
On the other hand, if one of the arguments to $\fsf_2$ has degree two, then the singleton is
$\xsf$ or $\ysf$, and so the term vanishes by assumption (b) and \ref{lem:kstark}.
Thus $\Usf_3(\asf_1,\asf_2,\asf_3)=0_\scrC$.

If  $n>3$, then each individual term in \eqref{eqn:Un2} vanishes as follows:
\begin{itemize}
\item
$\asf_1\star\fsf_{n-1}(\asf_2,\hdots,\asf_n)$:
if $|\asf_1|=1$ then by induction $\Usf_{n-1}(\asf_2,\hdots,\asf_n)=0_\scrC$, so by the assumption that Convention~\textnormal{\ref{convention}} has been adhered to, $\fsf_{n-1}(\asf_2,\hdots,\asf_n)=0_\scrC$, and thus the term vanishes. Otherwise $\asf_1=\Xsf$ or $\Ysf$, and so the term vanishes by (a) and \ref{lem:kstark}.
\item
$\fsf_2(\asf_1,\asf_2)\star\fsf_{n-2}(\asf_3,\hdots,\asf_n)$: if $n=4$, then by (a) and (b) this is a product of a multiple of $\Ksf_*$ with $\ksf_i$ and $\esf_{j,j'}$ terms, so vanishes by \ref{lem:kstark}. 
For $n>4$, if $|\asf_1|=|\asf_2|=1$ then the second factor $\fsf_{n-2}(\asf_3,\hdots,\asf_n)=0_\scrC$ again by induction and Convention~\textnormal{\ref{convention}}. Otherwise, one of $\asf_1,\asf_2$ has degree two, so the term vanishes by (b), \eqref{eqn:f2degall} and \ref{lem:kstark}.
\item
$\fsf_\ell(\asf_1,\hdots,\asf_\ell)\star\fsf_{n-\ell}(\asf_{\ell+1},\hdots,\asf_n)$ with $\ell, n-\ell\ge3$:
one of the two factors vanishes again by induction and Convention~\textnormal{\ref{convention}}.
\end{itemize}
Similarly, the terms $\fsf_{n-2}(\asf_1,\hdots,\asf_{n-2})\star\fsf_2(\asf_{n-1},\asf_n)$
and $\fsf_{n-1}(\asf_1,\hdots,\asf_{n-1})\star\asf_n$ vanish by symmetry.
Thus $\Usf_n(\asf_1,\hdots,\asf_n)=0_\scrC$, as required.
\end{proof}

\subsection{All Higher \texorpdfstring{$\msf_n$}{mt} Products}

The following is the main result of this paper. Note that the sum in \eqref{thm:Umaintext1} generalises the right hand side of the displayed equations in \ref{prop:m2chaindegone}.

\begin{thm}\label{thm:Umaintext}
For any $n\geq 2$ and any decomposition $n=j+k$ with $j,k\geq 0$, the following statements hold.
\begin{enumerate}
\item\label{thm:Umaintext1} For all sequences $\asf_1,\hdots,\asf_n\in\{\xsf,\ysf\}$, where there are $j$ occurrences of $\xsf$ and $k$ occurrences of $\ysf$,  in $\scrC$ there is an equality
\[
\Usf_{n}(\asf_1,\hdots,\asf_n)=\uplambda_{j+1,k}\,\Xsf+\uplambda_{j,k+1}\Ysf - \dd\fsf_{n}(\asf_1,\hdots,\asf_n)
\]
where 
\[
\fsf_{n}(\asf_1,\hdots,\asf_n)=-\left(\sum_{i=0}^{j-1}\uplambda_{i,n+1-i}\,\ksf_{\kx+i-(j+1)}+
\sum_{i=j+2}^{n+1}\uplambda_{i,n+1-i}\,\ksf_{\kx+i-(j+1)}
+\esf_{n+1,k}\right).
\]
In particular, $\Usf_n$ is not affected by the order of the sequence of degree one inputs.
\item Suppose that $n\geq 3$ and $\asf_1,\hdots,\asf_n\in\{\xsf,\ysf,\Xsf,\Ysf,\upxi\}$, with $\sum |\asf_i|\ge n+1$. Then $\Usf_n(\asf_1,\hdots,\asf_n)=0_\scrC$. 
\end{enumerate}
\end{thm}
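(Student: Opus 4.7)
The plan is a simultaneous induction on $n\ge 2$, with Part (1) at level $n$ supplying the hypotheses required for Proposition~\ref{prop:Un0}(2) to deliver Part (2) at the same level. The base case $n=2$ of Part (1) is precisely Proposition~\ref{prop:m2chaindegone} together with the choice of $\fsf_2$ recorded in \eqref{f2degone}, since $\Usf_2(\asf_1,\asf_2)=\asf_1\star\asf_2$ for degree-one inputs; Part (2) is vacuous for $n\le 2$.

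I would first derive Part (2) at level $n$ from Part (1) at levels $\le n$. By the inductive Part (1), for each $2\le\ell\le n-1$ the map $\fsf_\ell$ on pure degree-one inputs is a $\K$-linear combination of $\ksf_*$ and $\esf_{*,*}$ terms, hence simple; while on any mixed-degree input with $\sum|\asf_i|\ge\ell+1$, Convention~\ref{convention} combined with inductive Part (2) gives $\fsf_\ell=0_\scrC$, again simple. Together with the explicit shape of $\fsf_1$ and the form of $\fsf_2$ on mixed-degree inputs from \eqref{eqn:f2degall}, conditions (a) and (b) of Proposition~\ref{prop:Un0}(2) are satisfied, yielding Part (2) at level $n$.

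For Part (1), Lemma~\ref{lem:kstark} annihilates every middle product $\fsf_\ell\star\fsf_{n-\ell}$ in the top sum of \eqref{eqn:defineU} for $2\le\ell\le n-2$, since by IH these factors are linear combinations of $\ksf$ and $\esf$ terms whose pairwise products vanish. In the bottom sum, $\msf_j(\asf_{k+1},\hdots,\asf_{k+j})$ is a degree-two combination of $\Xsf,\Ysf$ by IH, so any $\fsf_{n-j+1}$ with $n-j+1\ge 3$ applied to the resulting mixed-degree tuple vanishes by inductive Part (2) and Convention~\ref{convention}. Only $j=n-1$ survives, and the range $j\le\mathfrak{m}_{n,k}$ forces $k\in\{0,1\}$. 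Thus \eqref{eqn:defineU} collapses to
\[
\Usf_n = \asf_1\star\fsf_{n-1}(\asf_2,\hdots,\asf_n) + \fsf_{n-1}(\asf_1,\hdots,\asf_{n-1})\star\asf_n - \fsf_2(\msf_{n-1}(\asf_1,\hdots,\asf_{n-1}),\asf_n) - \fsf_2(\asf_1,\msf_{n-1}(\asf_2,\hdots,\asf_n)).
\]

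The main obstacle is the bookkeeping of this final expression. Expanding $\fsf_{n-1}$ via IH as a $\K$-linear combination of $\ksf_*$ terms and a single $\esf_{n,*}$, and $\msf_{n-1}$ as $\uplambda_{*}\Xsf+\uplambda_{*}\Ysf$, Corollary~\ref{cor:xandkanticommute} converts each $\asf_i\star\ksf_*$ into $\pm\Ksf_*$, while \eqref{eqn:f2degall} expresses the two bottom-sum $\fsf_2$ contributions as $\pm\Ksf_*$. A careful sign and index audit shows that all $\Ksf_*$ contributions cancel, leaving only $\asf_1\star\esf_{n,k_2}+\esf_{n,k_1}\star\asf_n$. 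After normalising via Lemma~\ref{lem:explusxe}\eqref{lem:explusxe1} so that both $\esf$ terms carry a common index, Lemma~\ref{lem:explusxe}\eqref{lem:explusxe2} evaluates this combination as a chain of the form $(\,\cdot\,\Zsf_1,\,\cdot\,\Zsf_2,0)$ whose coefficients are built from $v_2^{\ordP-n-1}A_{\ge n+1}$ and $w_2^{\ordP-n-1}B_{\ge n+1}$. Splitting via \eqref{eq:splitAgeq} peels off $A_{n+1}$ and $B_{n+1}$, while the higher-order remainder is precisely $\dd(\esf_{n+1,k})$ by Corollary~\ref{cor:inductwithe}. The $A_{n+1}$-piece, whose $a$-coefficients are $\uplambda_{n+1-i,i}$ by Notation~\ref{def:AandB}, is then expanded by Corollary~\ref{cor:Aintosum} into $\uplambda_{j+1,k}\Xsf+\uplambda_{j,k+1}\Ysf$ plus exactly the $\dd(\ksf_*)$ terms prescribed by the formula for $\fsf_n$. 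Order-independence is automatic since every intermediate quantity depends only on the counts $(j,k)$; equation \eqref{eqn:mUf} then validates the stated choice of $\fsf_n$.
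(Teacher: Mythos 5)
Your proposal follows essentially the same route as the paper's proof: the same simultaneous induction with the base case \ref{prop:m2chaindegone}/\eqref{f2degone}, the same collapse of \eqref{eqn:defineU} to the four surviving terms, the same use of \ref{cor:xandkanticommute} and \eqref{eqn:f2degall} to handle the $\ksf_*$ and $\fsf_2$ contributions, and the same endgame via \ref{lem:explusxe}, \eqref{eq:splitAgeq}, \ref{cor:inductwithe} and \ref{cor:Aintosum}. The outline is correct.

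Three points are under-justified as written. First, the sentence ``a careful sign and index audit shows that all $\Ksf_*$ contributions cancel'' is precisely the technical heart of the theorem (the paper's Claim B): carrying it out requires splitting into the cases $\asf_1=\asf_n$ (where the two $\fsf_2$ terms cancel by the antisymmetry in \eqref{eqn:f2degall} and the $\ksf_*$ parts cancel by anticommutation) and $\asf_1\neq\asf_n$ (where one must track the exact index ranges in the inductive formula for $\fsf_{n-1}$ against the shifts $\xsf\star\ksf_i=\Ksf_i$, $\ysf\star\ksf_i=\Ksf_{i+1}$ and the values $\fsf_2(\xsf,\Ysf)=\Ksf_{\kx-1}$, $\fsf_2(\ysf,\Xsf)=\Ksf_{\kx+1}$); asserting the outcome is not yet a proof of it, though the claim is true. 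Second, for Part (2) you appeal only to \ref{prop:Un0}\eqref{prop:Un02}, but that proposition covers exactly the case $\sum|\asf_i|=n+1$ with a single degree-two input; the cases $\sum|\asf_i|\ge n+2$ (e.g.\ an input $\upxi$, or two degree-two inputs) instead need \ref{lem:fstarf}\eqref{cor:U0f0}, whose hypothesis is the simplicity you have already established, so this is a citation fix rather than a conceptual gap. Third, a sign: after the cancellation the leftover is $-(\asf_1\star\esf_{n,k_2}+\esf_{n,k_1}\star\asf_n)$, not $\asf_1\star\esf_{n,k_2}+\esf_{n,k_1}\star\asf_n$; the minus sign is needed so that \ref{lem:explusxe}\eqref{lem:explusxe2} produces $+\bigl(a^{k}v_2^{\ordP-(n+1)}A_{\geq n+1}\cdot\Zsf_1,\, b^{j}w_2^{\ordP-(n+1)}B_{\geq n+1}\cdot\Zsf_2,\,0\bigr)$, which is what \ref{cor:Aintosum} then converts into $\uplambda_{j+1,k}\Xsf+\uplambda_{j,k+1}\Ysf$ plus the prescribed $\dd(\ksf_*)$ and $\dd(\esf_{n+1,k})$ terms with the stated signs.
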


The case $n=2$ has already been considered: since $\Usf_2(\asf_1,\asf_2) = (-1)^{|\asf_1|+1}\asf_1\star\asf_2$, on $\asf_i\in\{\xsf,\ysf,\Xsf,\Ysf,\upxi\}$ the possible nonzero values of $\Usf_2$ are \ref{lem:compdegone} and \ref{lem:m2chaindegall}.

\begin{proof}
Fix some $n\geq 3$.  By induction we can \emph{choose} $\fsf_{i}$ for $2\leq i<n$ such that the following three conditions hold.
\begin{enumerate}
\item[(1$'$)] The condition (1) holds for all $i<n$.  Indeed, the case $n=2$ for (1) was established in \ref{prop:m2chaindegone}, after recalling that on degree one inputs $\Usf_2(\redbullet,\bluebullet)=\redbullet\star\bluebullet$. 
\item[(S)] The collection $\{\fsf_i\}_{i=1}^{n-1}$ is simple, since both $\fsf_1$ and $\fsf_2$ are simple by definition, \eqref{f2degone} and \eqref{eqn:f2degall}.
\item[(0)] Whenever $\Usf_i(\asf_1,\hdots,\asf_i)=0_\scrC$ for some $i<n$ and $\asf_1,\hdots,\asf_i\in\{\xsf,\ysf,\Xsf,\Ysf,\upxi\}$, then $\fsf_i(\asf_1,\hdots,\asf_i)=0_\scrC$. That is, we adhere to Convention~\ref{convention}.
\end{enumerate}

With these choices, given degree one inputs $\bsf_1,\hdots,\bsf_{n-1}\in\{\xsf,\ysf\}$ where there are $\mathfrak{j}$ occurrences of $\xsf$ and $\mathfrak{k}$ occurrences of $\ysf$, the following statements hold.
\begin{enumerate}
\item[(a)]  After passing to cohomology, writing in terms of the basis, then applying $\upiota$, as explained in \S\ref{subsec:MinModelRecap} (and underneath \ref{prop:m2chaindegone}) $\msf_{n-1}(\bsf_1,\hdots,\bsf_{n-1})=\uplambda_{\mathfrak{j}+1,\mathfrak{k}}\,\Xsf
+\uplambda_{\mathfrak{j},\mathfrak{k}+1}\Ysf$.
\item[(b)] There are equalities
\begin{align*}
\fsf_{n-1}(\bsf_1,\hdots,\bsf_{n-1})&=-\left(\sum_{i=0}^{\mathfrak{j}-1}\uplambda_{i,n-i}\,\ksf_{\kx+i-(\mathfrak{j}+1)}+
\sum_{i=\mathfrak{j}+2}^{n}\uplambda_{i,n-i}\,\ksf_{\kx+i-(\mathfrak{j}+1)}
+\esf_{n,\mathfrak{k}}\right)\\
&=-\left(\sum_{i=\mathfrak{k}+2}^{n}\uplambda_{n-i,i}\,\ksf_{\kx+\mathfrak{k}-i}+
\sum_{i=0}^{\mathfrak{k}-1}\uplambda_{n-i,i}\,\ksf_{\kx+\mathfrak{k}-i}
+\esf_{n,\mathfrak{k}}\right)
\end{align*}
where the second line is just the reindexing $i\mapsto n-i$, using $n-1=\mathfrak{j}+\mathfrak{k}$.
\item[(c)] For any $2\le\ell\le n-2$, 
\[
\fsf_\ell(\bsf_1,\hdots,\bsf_\ell)\star\fsf_{n-\ell}(\bsf_{\ell+1},\hdots,\bsf_n) = 0_\scrC.
\]
Indeed, each $\fsf_\ell$ evaluated on degree one inputs is a linear combination of $\ksf_*$ and $\esf_{*,*}$, and these multiply to zero by \ref{lem:kstark}.
\end{enumerate}

Now consider $\Usf_n(\asf_1,\hdots,\asf_n)$ for $\asf_1,\hdots,\asf_n\in\{\xsf,\ysf\}$, where there are $j$ occurrences of $\xsf$ and $k$ occurrences of $\ysf$.   
We consider the two rows of \eqref{eqn:defineU} separately.
In the first row, for each $2\le\ell\le n-2$, the term
$\fsf_\ell(\asf_1,\hdots,\asf_\ell)\star\fsf_{n-\ell}(\asf_{\ell+1},\hdots,\asf_n)$
vanishes by (c).
In the second row of \eqref{eqn:defineU}, whenever $j$ is such that $2\leq j\leq n-2$ the term
\[
\fsf_{n-j+1}(\hat{\asf}_1,\hdots,\hat{\asf}_k,\msf_j(\asf_{k+1},\hdots,\asf_{k+j}),\asf_{k+j+1},\hdots,\asf_n)
\]
has $n-j\geq 2$ arguments of degree one and a single argument $\msf_j(\asf_{k+1},\hdots,\asf_{k+j})$
of degree two.  This vanishes by \ref{prop:Un0}\eqref{prop:Un02} and (0) above, since the hypotheses there hold by (b) and \eqref{eqn:f2degall}.
These vanishing statements leave only
\begin{equation}
\begin{aligned}
\Usf_n(\asf_1,\hdots,\asf_n) =&\phantom{-} \asf_1\star\, \fsf_{n-1}(\asf_2,\hdots,\asf_n) +  \fsf_{n-1}(\asf_1,\hdots,\asf_{n-1})\star \asf_n\,\\
&- \fsf_{2}(\asf_1,\msf_{n-1}(\asf_2,\hdots,\asf_n)) - \fsf_{2}(\msf_{n-1}(\asf_1,\hdots,\asf_{n-1}),\asf_n).
\end{aligned}\label{eqn:genUndegone}
\end{equation}
The heart of the proof is to use the inductive hypothesis to prove the following claim.

\medskip
\noindent
\textbf{Claim A.} With notation as above, \eqref{eqn:genUndegone} simplifies to
\[
\Usf_n(\asf_1,\hdots,\asf_n)=(\,a^{k}v_2^{\ordP-(n+1)}A_{\geq n+1}\cdot\Zsf_1, \,\,b^jw_2^{\ordP-(n+1)}B_{\geq n+1}\cdot\Zsf_2,\,0).
\]  
The proof of this claim splits into two cases. 

\medskip
\noindent
\emph{Case 1: $\asf_1=\asf_n$.}  In this case, the number of occurrences $\mathfrak{k}$ of $\ysf$ in $\asf_1,\hdots,\asf_{n-1}$ equals the number of occurrences of $\ysf$ in $\asf_2,\hdots,\asf_n$.  Similarly for the number of $\xsf$'s.  The inductive hypothesis (a) implies that $\msf_{n-1}(\asf_2,\hdots,\asf_n)=\msf_{n-1}(\asf_1,\hdots,\asf_{n-1})$, and this clearly has degree two.  But by \eqref{eqn:f2degall} $\fsf_2$ is antisymmetric on any mix of degree one and degree two inputs, thus the bottom two terms in \eqref{eqn:genUndegone} cancel to give
\begin{align}
\Usf_n(\asf_1,\hdots,\asf_n) &=
 \asf_1\star\, \fsf_{n-1}(\asf_2,\hdots,\asf_n) +  \fsf_{n-1}(\asf_1,\hdots,\asf_{n-1})\star \asf_n.\label{eqn:gettingthere}
\end{align}
Now again since the number of occurrences $\mathfrak{k}$ of $\ysf$ in $\asf_1,\hdots,\asf_{n-1}$ equals the number of occurrences of $\ysf$ in $\asf_2,\hdots,\asf_n$, and similarly for the $\xsf$'s,  the inductive hypothesis (b) implies that $\fsf_{n-1}(\asf_2,\hdots,\asf_n)=\fsf_{n-1}(\asf_1,\hdots,\asf_{n-1})$, and furthermore both terms equal expressions of the form
\[
\begin{cases}
-\sum \uplambda_*\ksf_* - \esf_{n,k}
 &\mbox{if } \asf_1=\asf_n=\xsf \mbox{ (so }\mathfrak{k}=k)\\
-\sum \uplambda_*\ksf_* - \esf_{n,k-1} &\mbox{if } \asf_1=\asf_n=\ysf \mbox{ (so }\mathfrak{k}=k-1).
\end{cases}
\]
Substituting this into \eqref{eqn:gettingthere}, and using the fact that every degree one input anti-commutes with $\ksf_i$ by \ref{cor:xandkanticommute}, the above \eqref{eqn:gettingthere} simplifies to
\begin{align*}
\Usf_{n}(\asf_1,\hdots,\asf_n) &=
\begin{cases}
-(\xsf\star \esf_{n,k\phantom{-1}}+  \esf_{n,k}\star\xsf) &\mbox{if } \asf_1=\asf_n=\xsf\\
-(\ysf\star \esf_{n,k-1} +  \esf_{n,k-1}\star\ysf) &\mbox{if } \asf_1=\asf_n=\ysf
\end{cases}\\
&=-(\xsf\star \esf_{n,k\phantom{-1}}+  \esf_{n,k}\star\xsf) \tag{by \ref{lem:explusxe}\eqref{lem:explusxe1}}\\
&=(\,a^{k}v_2^{\ordP-(n+1)}A_{\geq n+1}\cdot\Zsf_1, \,\,b^jw_2^{\ordP-(n+1)}B_{\geq n+1}\cdot\Zsf_2,\,0)\tag{by \ref{lem:explusxe}\eqref{lem:explusxe2}}
\end{align*}
since $n=j+k$, which verifies Claim A in case 1.

\medskip
\noindent
\emph{Case 2: $\asf_1\neq\asf_n$.}  Write $k_1$ for the number of occurrences of $\ysf$ in $\asf_1,\hdots,\asf_{n-1}$, and $k_2$ for the number of occurrences of $\ysf$ in $\asf_2,\hdots,\asf_{n}$.  Thus, by inductive hypothesis (a), 
\begin{align*}
\msf_{n-1}(\asf_1,\hdots,\asf_{n-1})&=\uplambda_{n-k_1,k_1}\Xsf+\uplambda_{n-k_1-1,k_1+1}\Ysf\\
\msf_{n-1}(\asf_2,\hdots,\asf_n)&=\uplambda_{n-k_2,k_2}\Xsf+\uplambda_{n-k_2-1,k_2+1}\Ysf,
\end{align*}
and so the general formula \eqref{eqn:genUndegone} reads 
\begin{alignat*}{6}
\Usf_n(\asf_1,\hdots,\asf_n) &=\phantom{-} \asf_1\star\, \fsf_{n-1}(\asf_2,\hdots,\asf_n) &&+  \fsf_{n-1}(\asf_1,\hdots,\asf_{n-1})\star \asf_n\,\\
&\,\,- \fsf_{2}(\asf_1,\uplambda_{n-k_2,k_2}\Xsf+\uplambda_{n-k_2-1,k_2+1}\Ysf) &&- \fsf_{2}(\uplambda_{n-k_1,k_1}\Xsf+\uplambda_{n-k_1-1,k_1+1}\Ysf,\asf_n).
\end{alignat*}
Substituting in the inductive hypothesis (b) for the $\fsf_{n-1}$ terms, 
\begin{alignat*}{6}
\Usf_n(\asf_1,\hdots,\asf_n) &=-\asf_1\star(\textstyle\sum \uplambda_*\ksf_* - \esf_{n,k_2}) && -(\textstyle\sum \uplambda_*\ksf_* - \esf_{n,k_1})\star\asf_n\\
&\,\,- \fsf_{2}(\asf_1,\uplambda_{n-k_2,k_2}\Xsf+\uplambda_{n-k_2-1,k_2+1}\Ysf) &&- \fsf_{2}(\uplambda_{n-k_1,k_1}\Xsf+\uplambda_{n-k_1-1,k_1+1}\Ysf,\asf_n).
\end{alignat*}
Since $\asf_n$ anti-commutes with any $\ksf_*$ by \ref{cor:xandkanticommute}, and up to sign on the bottom right we can swap the order of the inputs by \eqref{eqn:f2degall}, it follows that $\Usf_n(\asf_1,\hdots,\asf_n)-(\asf_1\star\esf_{n,k_2}+\esf_{n,k_1}\star\asf_n)$ equals
\begin{equation}
\begin{aligned}
&-\asf_1\star(\textstyle\sum \uplambda_*\ksf_* ) && +\asf_n\star(\textstyle\sum \uplambda_*\ksf_*)\\
&- \fsf_{2}(\asf_1,\uplambda_{n-k_2,k_2}\Xsf+\uplambda_{n-k_2-1,k_2+1}\Ysf) &&+ \fsf_{2}(\asf_n,\uplambda_{n-k_1,k_1}\Xsf+\uplambda_{n-k_1-1,k_1+1}\Ysf).
\end{aligned}\label{eqn:wantzero}
\end{equation}

\noindent
\textbf{Claim B.} The expression \eqref{eqn:wantzero} is zero. 

\medskip
To ease notation, the proof of Claim B splits into two subcases.

\smallskip
\noindent
\emph{Subcase \textnormal{B(}$1$\textnormal{)}} $\asf_1=\xsf$, so $\asf_n=\ysf$.  The precise indices on the top leftmost term in \eqref{eqn:wantzero} are
\begin{align*}
-\asf_1\star(\textstyle\sum \uplambda_*\ksf_*)&=-\xsf\star\left(\sum_{i=k_2+2}^{n}\uplambda_{n-i,i}\,\ksf_{\kx+k_2-i}+
\sum_{i=0}^{k_2-1}\uplambda_{n-i,i}\,\ksf_{\kx+k_2-i}\right).
\end{align*}
By \ref{cor:xandkanticommute} $\xsf\star\ksf_i=\Ksf_i$, thus $-\asf_1\star(\textstyle\sum \uplambda_*\ksf_*)$ is the sum $-\sum \uplambda_{n-i,i}\Ksf_{\kx+k_2-i}$ containing all $\uplambda_{n-i,i}$ terms except $i=k_2$ and $i=k_2+1$.  Now since $\asf_1=\xsf$, by \eqref{eqn:f2degall}, it follows that
\[
- \fsf_{2}(\asf_1,\uplambda_{n-k_2,k_2}\Xsf+\uplambda_{n-k_2-1,k_2+1}\Ysf)=-\uplambda_{n-k_2-1,k_2+1}\Ksf_{\kx-1}.
\]
Thus the two leftmost terms in \eqref{eqn:wantzero} equal $-\sum \uplambda_{n-i,i}\Ksf_{\kx+k_2-i}$, where the sum contains all $\uplambda_{n-i,i}$ terms except $i=k_2$.

On the other hand, the precise indices on the top rightmost term in \eqref{eqn:wantzero} are
\begin{align*}
\asf_n\star(\textstyle\sum \uplambda_*\ksf_*)&=\ysf\star\left(\sum_{i=k_1+2}^{n}\uplambda_{n-i,i}\,\ksf_{\kx+k_1-i}+
\sum_{i=0}^{k_1-1}\uplambda_{n-i,i}\,\ksf_{\kx+k_1-i}\right).
\end{align*}
Since $\asf_1=\xsf$, it follows that $k_1=k_2-1$, and thus
\begin{align*}
\asf_n\star(\textstyle\sum \uplambda_*\ksf_*)&=\ysf\star\left(\sum_{i=k_2+1}^{n}\uplambda_{n-i,i}\,\ksf_{\kx+k_2-1-i}+
\sum_{i=0}^{k_2-2}\uplambda_{n-i,i}\,\ksf_{\kx+k_2-1-i}\right).
\end{align*}
By \ref{cor:xandkanticommute} $\ysf\star\Ksf_i=\Ksf_{i+1}$, thus $\asf_n\star(\textstyle\sum \uplambda_*\ksf_*)$ is the sum $\sum \uplambda_{n-i,i}\Ksf_{\kx+k_2-i}$ containing all $\uplambda_{n-i,i}$ terms except $i=k_2$ and $i=k_2-1$.  Again by \eqref{eqn:f2degall} it follows that
\begin{align*}
\fsf_{2}(\asf_n,\uplambda_{n-k_1,k_1}\Xsf+\uplambda_{n-k_1-1,k_1+1}\Ysf)=\uplambda_{n-k_1,k_1}\Ksf_{\kx+1}=\uplambda_{n-k_2+1,k_2-1}\Ksf_{\kx+1}.
\end{align*}
Thus the two rightmost terms in \eqref{eqn:wantzero} equal $+\sum \uplambda_{n-i,i}\Ksf_{\kx+k_2-i}$ where the sum contains all $\uplambda_{n-i,i}$ terms except $i=k_2$.

Combining, the leftmost terms in \eqref{eqn:wantzero} cancel the rightmost terms, and thus \eqref{eqn:wantzero} is zero. This verifies Claim B in subcase B($1$).

\smallskip
\noindent
\emph{Subcase \textnormal{B(}$2$\textnormal{)}} $\asf_1=\ysf$, so $\asf_n=\xsf$ and now $k_2=k_1-1$.  In a similar manner to the above,  
\begin{align*}
-\asf_1\star(\textstyle\sum \uplambda_*\ksf_*)&=-\ysf\star\left(\sum_{i=k_2+2}^{n}\uplambda_{n-i,i}\,\ksf_{\kx+k_2-i}+
\sum_{i=0}^{k_2-1}\uplambda_{n-i,i}\,\ksf_{\kx+k_2-i}\right)\\
&=-\ysf\star\left(\sum_{i=k_1+1}^{n}\uplambda_{n-i,i}\,\ksf_{\kx+k_1-1-i}+
\sum_{i=0}^{k_1-2}\uplambda_{n-i,i}\,\ksf_{\kx+k_1-1-i}\right)\\
&=-\sum_{i=k_1+1}^{n}\uplambda_{n-i,i}\,\Ksf_{\kx+k_1-i}-
\sum_{i=0}^{k_1-2}\uplambda_{n-i,i}\,\Ksf_{\kx+k_1-i},
\end{align*}
and so the two leftmost terms of \eqref{eqn:wantzero} now equal $-\sum \uplambda_{n-i,i}\Ksf_{\kx+k_1-i}$ where the sum contains all $\uplambda_{n-i,i}$ terms except $i=k_1$.  Similarly the rightmost terms of \eqref{eqn:wantzero} give the same sum, with the sign swapped.  Thus again the leftmost and rightmost terms cancel, so \eqref{eqn:wantzero} is zero.  This verifies Claim B in subcase B(2).

\medskip
Thus Claim B holds, and so
\[
\Usf_{n}(\asf_1,\hdots,\asf_n) = -(\asf_1\star\esf_{n,k_2}+\esf_{n,k_1}\star\ysf).
\]
Now if $\asf_1=\xsf$, necessarily $\asf_n=\ysf$ and so there $k_1=k-1$ and $k_2=k$.  Similarly, if $\asf_1=\ysf$, then $k_1=k$ and $k_2=k-1$.  Consequently, it follows that
\begin{align*}
\Usf_{n}(\asf_1,\hdots,\asf_n) &=
\begin{cases}
-(\xsf\star \esf_{n,k\phantom{-1}}+  \esf_{n,k-1}\star\ysf) &\mbox{if } \asf_1=\xsf\\
-(\ysf\star \esf_{n,k-1} +  \esf_{n,k}\star\xsf) &\mbox{if } \asf_1=\ysf
\end{cases}\\
&=-(\xsf\star \esf_{n,k\phantom{-1}}+  \esf_{n,k}\star\xsf) \tag{by \ref{lem:explusxe}\eqref{lem:explusxe1}}\\
&=(\,a^{k}v_2^{\ordP-(n+1)}A_{\geq n+1}\cdot\Zsf_1, \,\,b^jw_2^{\ordP-(n+1)}B_{\geq n+1}\cdot\Zsf_2,\,0).\tag{by \ref{lem:explusxe}\eqref{lem:explusxe2}}
\end{align*}
This completes the proof of Claim A in case 2, and thus the proof of Claim A overall.

\medskip
Thus always $\Usf_n(\asf_1,\hdots,\asf_n)=(\,a^{k}v_2^{\ordP-(n+1)}A_{\geq n+1}\cdot\Zsf_1, \,\,b^jw_2^{\ordP-(n+1)}B_{\geq n+1}\cdot\Zsf_2,\,0)$, and from here the proof is elementary.  Decomposing the right hand side using \eqref{eq:splitAgeq}, 
\begin{align*}
 \Usf_{n}&(\asf_1,\hdots,\asf_n)\\
&=(\, a^kA_{n+1}\cdot \Zsf_1, \,\, b^jB_{n+1}\cdot\Zsf_2,\,0)+(\,a^kv_2^{\ordP-(n+1)}A_{\geq n+2}\cdot\Zsf_1, \,\,b^jw_2^{\ordP-(n+1)}B_{\geq n+2}\cdot\Zsf_2,\,0)\\
 &=(\, a^kA_{n+1}\cdot \Zsf_1, \,\, b^{j}B_{n+1}\cdot\Zsf_2,\,0)+\dd(\esf_{n+1,k}).\tag{by \ref{cor:inductwithe}}
\end{align*}
Now by definition (reading \ref{def:AandB} backwards)
\[
a^kA_{n+1}=\uplambda_{0,n+1}a^{\kx-(n+1)+k}+\hdots+\uplambda_{j,k+1}\,a^{\kx-1}+\uplambda_{j+1,k}\,a^{\kx}+\hdots+\uplambda_{n+1,0}\,a^{\kx+k},
\] 
which is a polynomial in $a$. The corresponding polynomial $g\in\mathbb{K}[b]$ defined in \ref{cor:Aintosum} is equal to $b^jB_{n+1}$, again by definition \ref{def:AandB}. Hence, applying \ref{cor:Aintosum} we may write
\begin{align*}
(\, a^kA_{n+1}\cdot \Zsf_1, \,\, b^{j}B_{n+1}\cdot\Zsf_2,\,0)&=\uplambda_{j+1,k}\,\Xsf
+\uplambda_{j,k+1}\Ysf\\
&+ \dd \left(\sum_{i=0}^{j-1}\uplambda_{i,n+1-i}\,\ksf_{\kx+i-(j+1)}+
\sum_{i=j+2}^{n+1}\uplambda_{i,n+1-i}\,\ksf_{\kx+i-(j+1)}\right).
\end{align*}
Consequently we can choose $\fsf_n$ on degree one inputs to satisfy the condition in (1).  

\medskip
For (2), consider $\Usf_n(\asf_1,\hdots,\asf_n)$
with $\asf_i\in\mathsf{A}_{d_i}$. 
If $\sum d_i\ge n+2$, then $\Usf_n(\asf_1,\hdots,\asf_n)=0_\scrC$ by \ref{lem:fstarf}\eqref{cor:U0f0}.
If $\sum d_i = n+1$, that is one of the inputs has degree two and all the others have degree one,
then $\Usf_n(\asf_1,\hdots,\asf_n)=0_\scrC$ by \ref{prop:Un0}\eqref{prop:Un02}: as before, the hypotheses there hold by induction hypothesis (b), (0) and \eqref{eqn:f2degall}.

\medskip
This establishes (1) and (2) for $n$.  By \ref{lem:Uis0}, we can choose the map $\fsf_n$ to be zero whenever at least one input is not degree one.  Since further on degree one inputs $\fsf_n$ is a combination of $\ksf$ and $\esf$, it follows that $\fsf_n$ is simple.  This verifies (S) and (0) replacing $n$ by $n+1$.  Thus (1$'$), (S) and (0) all hold replacing $n$ by $n+1$, allowing the induction to proceed.  
\end{proof}

\subsection{Summary}
The previous subsections combine to verify the following, which is the main result in the introduction.  To be consistent with the notation there (where $\xsf$ and $\Xsf$ are the classes in cohomology), this subsection introduces the one further abuse of notation that $\xsf,\Xsf$ now also denote their classes in cohomology $[\xsf],[\Xsf]$, etc.  Since no more calculations with chains are needed, this introduces no ambiguities.

Thus consider the graded vector space $\mathsf{A}=\bigoplus_{i\in\mathbb{Z}}\mathsf{A}_i$, where
\[
\mathsf{A}_i=
\begin{cases}
\mathbb{K} & \mbox{if }i=0,3\\
\mathbb{K}^2 & \mbox{if }i=1,2\\
0&\mbox{else},
\end{cases}
\]
and write $1$ for the basis of $\mathsf{A}_0$, $\xsf,\ysf$ for the basis of $\mathsf{A}_1$, and $\Xsf,\Ysf$ for the basis of $\mathsf{A}_2$, and $\upxi$ for the basis of $\mathsf{A}_3$.  Recall that all $\Ainf$-algebras in this paper are strictly unital.
 
\begin{cor}\label{thm: main}
The following defines an $\Ainf$-structure on $\mathsf{A}$, and furthermore the resulting $\Ainf$-algebra is quasi-isomorphic to the \textnormal{DG}-algebra $\scrC$. 
\begin{enumerate}
\item\label{maintext:one} For any $n\geq 2$ and any decomposition $n=j+k$ with $j,k\geq 0$,
\[
\msf_{n}(\underbrace{\mathsf{x},\hdots,\mathsf{x}}_j,\underbrace{\mathsf{y},\hdots,\mathsf{y}}_k)=\uplambda_{j+1,k}\,\Xsf+\uplambda_{j,k+1}\Ysf,
\] 
where the $\uplambda$'s are the coefficients from the glue in \eqref{eq!glue}.  
\item More generally, $\msf_n$ with $n\geq 2$ applied only to degree one inputs (so, combinations of $\mathsf{x}$ and $\mathsf{y}$) does not depend on the order of those degree one inputs, and thus is determined by \eqref{maintext:one} above. 
\item
The only other non-zero products are 
\[
-\msf_2(\xsf,\Xsf) = \upxi=\msf_2(\Xsf,\xsf) \qquad {-\msf_2(\ysf,\Ysf)} = \upxi =\msf_2(\Ysf,\ysf).
\]\end{enumerate}
\end{cor}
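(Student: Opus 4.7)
The plan is to assemble the pieces already established into the statement, by invoking Kadeishvili's theorem \ref{thm:kadeishvili} to produce an $\Ainf$-structure on $\mathsf{A}=\mathrm{H}^*(\scrC)$ together with an $\Ainf$-quasi-isomorphism $\fsf_*\colon \mathsf{A}\to\scrC$, and then reading off each $\msf_n$ from the chain-level formula \eqref{eqn:mUf}, which says that $\upiota\msf_n = [\Usf_n]$.

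First I would handle parts (1) and (2). Given degree-one inputs $\asf_1,\hdots,\asf_n\in\{\xsf,\ysf\}$ with $j$ copies of $\xsf$ and $k$ copies of $\ysf$, Theorem \ref{thm:Umaintext}(1) computes
\[
\Usf_n(\asf_1,\hdots,\asf_n)=\uplambda_{j+1,k}\,\Xsf+\uplambda_{j,k+1}\Ysf - \dd\fsf_n(\asf_1,\hdots,\asf_n).
\]
The right-hand side is manifestly independent of the ordering of the inputs, and passing to cohomology kills the exact term $\dd\fsf_n(\asf_1,\hdots,\asf_n)$. Applying $[-]$ and then $\upiota$, equation \eqref{eqn:mUf} yields
\[
\upiota\msf_n(\asf_1,\hdots,\asf_n) = \uplambda_{j+1,k}\,\Xsf+\uplambda_{j,k+1}\Ysf,
\]
which is precisely the asserted formula and also shows symmetry in the degree-one inputs.

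For part (3), the four nonzero mixed-degree products are exactly those computed in \eqref{eqn:iotam2degall}, which were derived from the chain-level identities of Lemma \ref{lem:m2chaindegall} by applying $[-]$ and $\upiota$. To rule out any further nonzero higher products among the basis elements $\{\xsf,\ysf,\Xsf,\Ysf,\upxi\}$, I would combine two facts. For pure-degree-one inputs with $n\ge2$, the full list is already covered by (1). For $n\ge3$ with $\sum|\asf_i|\ge n+1$, Theorem \ref{thm:Umaintext}(2) gives $\Usf_n=0_\scrC$, so $\msf_n=0_\mathsf{A}$; the remaining cases (inputs involving $\upxi$, or multiple degree-two inputs) already vanish for degree reasons by Remark \ref{rem:mndeg3inputs}. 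Since strict unitality forces $\msf_n(\hdots,1,\hdots)=0$ for $n\ge3$ and $\msf_2(1,-)=\msf_2(-,1)=\id$, there are no further products to describe.

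Finally, Kadeishvili's theorem \ref{thm:kadeishvili} asserts that the map $\fsf_1=\upiota$ extends to an $\Ainf$-quasi-isomorphism $\fsf_*\colon(\mathsf{A},\{\msf_n\})\to\scrC$, and this furnishes the quasi-isomorphism of $\Ainf$-algebras claimed in the last clause. The serious content has already been absorbed into Theorem \ref{thm:Umaintext}; the main obstacle there was controlling the lower-order terms $\fsf_{n-1}(\asf_1,\hdots,\asf_{n-1})$ through the induction so that the formal combinatorial cancellations in Claims A and B go through. Having paid that cost, the present corollary is a clean repackaging.
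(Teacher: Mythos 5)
Your proposal is correct and follows essentially the same route as the paper, which proves the corollary as a direct consequence of Theorem~\ref{thm:Umaintext} and \eqref{eqn:iotam2degall}, these having been obtained by running Kadeishvili's construction from \S\ref{subsec:MinModelRecap}. Your extra bookkeeping (strict unitality, Remark~\ref{rem:mndeg3inputs} for the degree-reason vanishings, and reading off $\msf_n=[\Usf_n]$ via \eqref{eqn:mUf}) just makes explicit what the paper leaves implicit.
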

\begin{proof}
This is now a direct consequence of \ref{thm:Umaintext} and \eqref{eqn:iotam2degall}, given those followed the construction for Kadeishvili's Theorem \ref{thm:kadeishvili} outlined in \S\ref{subsec:MinModelRecap}.
\end{proof}

\section{Corollaries}\label{sec:DefThCor}

This section works over $\mathbb{K}=\mathbb{C}$, or any algebraically closed field of characteristic zero (which we still denote $\mathbb{C}$), and gives the deformation theory and mirror symmetry consequences of the previous sections.  Throughout, given scalars $\uplambda_{jk}\in\mathbb{C}$ subject to \ref{setup:key}, consider the variety $\scrX$ defined by \eqref{eq!glue}, with associated $\mathbb{P}^1\cong\Curve\subset\scrX$.

\subsection{NC Deformation Theory Summary}\label{subsec:NCdeftheory}
Maintaining the setup above, the sheaf $\scrO_\Curve\in\coh\scrX$ gives rise to the noncommutative deformation functor
\[
\scrD\textnormal{ef}_{\scrO_\Curve}\colon\mathfrak{art}_1\to\textnormal{Set}
\]
defined in e.g.\ the survey \cite[2.1]{Kawamata_survey} or \cite{BoothDerived}.  
As is well known e.g.\ \cite[\S2]{Kawamata_survey}, this functor admits a prorepresentable hull, written $\Lambda_{\mathrm{def}}$, called the NC deformation algebra.  

As is also well-known (see e.g.\ \cite[p615]{TodaUtah}, or \cite[\S5]{Kawamata_survey}), $\Lambda_{\mathrm{def}}$ can be explicitly presented using the $\Ainf$-structure of any DGA quasi-isomorphic to $\End^{\mathrm{DG}}_X(\scrI)$ as in \S\ref{sec:DGAC}, where $\scrI$ is an injective resolution of $\scrO_\Curve$. Indeed, the  $\Ainf$-morphisms give a morphism
\[
\mathsf{m}\colonequals\sum_{i\geq 2}\mathsf{m}_i\colon\bigoplus_{i\geq 2}\Ext^1_\scrX(\scrO_\Curve,\scrO_\Curve)^{\otimes i}\to\Ext^2_\scrX(\scrO_\Curve,\scrO_\Curve)
\]
which dualises to give a presentation
\[
\Lambda_{\mathrm{def}}\cong\frac{\mathbb{C}\llangle \Ext^1_\scrX(\scrO_\Curve,\scrO_\Curve)^*\rrangle}{\textnormal{Image}(\mathsf{m}^*)}=\frac{\mathbb{C}\llangle x,y\rrangle}{\textnormal{Image}(\mathsf{m}^*)}.
\] 
By \ref{prop:DGAcheck}, we can compute the above instead using the $\Ainf$-algebra structure on the more manageable DGA $\scrC$, and thus below we will freely use \ref{thm: main} to describe $\Lambda_{\mathrm{def}}$.

\subsection{Superpotentials and Necklaces}\label{subsec:necklacesmain}
Following the conventions in e.g.\ \cite{DWZ, BW2, Davison}, consider the $\C$-linear map $\partial_{x}\colon \ring\to\ring$ which simply `strikes off' the leftmost $x$ of each monomial.  Thus,  on monomials 
\[
\partial_{x}(m)=\begin{cases} n & \text{if } m = xn \\ 0 & \text{otherwise,} \end{cases}
\]
with $\partial_{y}$ being defined similarly.  In contrast, the cyclic derivative is the $\C$-linear map $\dcyc_{x}\colon \ring\to\ring$ which on monomials sends
\[
x_{i_1}\hdots x_{i_t} \mapsto
\sum_{j=1}^t \partial_{x}(x_{i_j}x_{i_{j+1}}\hdots x_{i_t}\cdot x_{i_1}\hdots x_{i_{j-1}}),
\]
with $\dcyc_y$ being defined similarly.  For $f\in\ring$, the Jacobi algebra is defined to be
\[
\Jac(f) \colonequals \frac{\ring}{\lcl\dcyc_{x}f, \dcyc_{y}f\rcl}
\]
where $\lcl\dcyc_{x}f, \dcyc_{y}f\rcl$ is the closure of the two-sided ideal $(\dcyc_{x}f, \dcyc_{y}f)$.

\medskip 
Recall from the introduction that the free necklace polynomial is defined to be
\[
\NeckPoly_{j,k}(x,y)\colonequals  \frac{1}{j+k}\sum_{m\in\Orb_{j,k}}|m| \cdot p_m.
\]

\begin{remark}
It is immediate from the definition that, if we instead work up to cyclic rotation, we may replace $|m|p_m$ with the $m$ distinct representatives of the orbit.  In this way,  up to cyclic permutation, $\NeckPoly_{j,k}$ is clearly then equal to all terms with $j$ occurrences of $x$ in the free algebra expansion of $\frac{1}{j+k}(x+y)^{j+k}$.
\end{remark}

\begin{notation}
Write $\Mono_{j,k}$ for the sum of all monomials in $x$ and $y$, inside the free algebra, where there are $j$ occurrences of $x$, and $k$ occurrences of $y$. 
\end{notation}

For calibration, $\Mono_{2,2}=xxyy+xyyx+yyxx+yxxy+xyxy+yxyx$.  The following is elementary, where $\mathbb{K}=\mathbb{C}$ is used to allow the denominators in the proof.

\begin{lemma}\label{lem:NecksDiff}
$\dcyc_x(\NeckPoly_{j+1,k})=\Mono_{j,k}$ and $\dcyc_y(\NeckPoly_{j,k+1})=\Mono_{j,k}$.
\end{lemma}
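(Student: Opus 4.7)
The plan is to reduce the identity to a short combinatorial count, using the preceding remark as the one nontrivial input. First I would record that by the remark, working up to cyclic permutation inside $\C\langle x,y\rangle$, one has $\NeckPoly_{j+1,k}=\frac{1}{j+k+1}\Mono_{j+1,k}$. The cyclic derivative $\dcyc_x$ is manifestly invariant under cyclic rotation of its monomial argument (its very definition is a sum of strip-offs over the $n$ cyclic rotations), so $\dcyc_x$ descends to the cyclic quotient. In particular we may replace $\NeckPoly_{j+1,k}$ by $\frac{1}{j+k+1}\Mono_{j+1,k}$ before applying $\dcyc_x$, reducing the claim to the combinatorial identity
\[
\dcyc_x(\Mono_{j+1,k}) \;=\; (j+k+1)\,\Mono_{j,k}.
\]

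For this identity I would count in two ways. By definition, $\dcyc_x(\Mono_{j+1,k})$ is the sum, over all monomials $m$ with $j+1$ occurrences of $x$ and $k$ occurrences of $y$, and over all positions $p$ of $x$ in $m$, of the monomial obtained by cyclically rotating $m$ so that position $p$ is first and then deleting the leading $x$. The output of this rotate-and-strip procedure clearly lands in $\Mono_{j,k}$. Conversely, given any $m'\in\Mono_{j,k}$, I would identify the preimages as the $j+k+1$ distinct cuts of the cyclic word $x\cdot m'$: each cut produces a linear monomial $m$ of length $j+k+1$ together with a designated $x$-position $p$ (namely, the image of the prepended $x$ under the cut). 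A quick count $\binom{j+k+1}{j+1}(j+1)=(j+k+1)\binom{j+k}{j}$ confirms that the totals match, and the bijection gives each $m'$ exactly $j+k+1$ preimages.

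Dividing by $j+k+1$ yields $\dcyc_x(\NeckPoly_{j+1,k})=\Mono_{j,k}$. The statement for $\dcyc_y(\NeckPoly_{j,k+1})$ follows by symmetry, swapping the roles of $x$ and $y$.

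The only genuine bookkeeping point, which I expect to be the one place to be careful, is the $(j+k+1)$-to-$1$ claim when $m'$ has a nontrivial cyclic symmetry: two cuts of $x\cdot m'$ can produce the same underlying linear word $m$, but the designated position $p$ differs, so the pairs $(m,p)$ remain distinct and the count of $j+k+1$ preimages is correct. Everything else is routine.
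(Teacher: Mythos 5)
Your proof is correct, and it proves the same combinatorial fact as the paper but organises it differently. The paper's proof is a one-liner: it differentiates the necklace sum orbit by orbit, writing $\dcyc_x(\NeckPoly_{j+1,k})=\sum_{m\in\Orb_{j+1,k}}\tfrac{|m|}{j+k+1}\,\dcyc_x(p_m)$ and asserting that this equals $\Mono_{j,k}$ ``at once from the definitions''; unpacking that assertion requires exactly the rotate-and-strip count you perform. You instead use the cyclic invariance of $\dcyc_x$ (clear from its definition as a sum over all cyclic rotations) together with the preceding remark to replace $\NeckPoly_{j+1,k}$ by $\tfrac{1}{j+k+1}\Mono_{j+1,k}$ before differentiating, reducing everything to the symmetrised identity $\dcyc_x(\Mono_{j+1,k})=(j+k+1)\Mono_{j,k}$, which you establish by the cut bijection on the cyclic word $x\cdot m'$. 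What your route buys is transparency at the only delicate point, namely necklaces with nontrivial cyclic symmetry: in the paper's orbit-by-orbit computation a symmetric representative $p_m$ produces repeated terms in $\dcyc_x(p_m)$ whose multiplicities must be matched against the weight $|m|/(j+k+1)$, whereas in your count the pairs (monomial, marked $x$-position) remain distinct even when the underlying words coincide, exactly as you observe. Both arguments use characteristic zero only through these denominators, and the $\dcyc_y$ statement follows by the $x\leftrightarrow y$ symmetry in either version.
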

\begin{proof}
This follows at once from the definitions, since
\[
\dcyc_x(\NeckPoly_{j+1,k})= \sum_{m\in\Orb_{j+1,k}}\frac{|m|}{j+1+k} \cdot \dcyc_x (p_m)=\Mono_{j,k}.\qedhere
\]
\end{proof}

\subsection{Deformation Theory Corollaries} 
The following is now immediate, and is one of the main results.

\begin{cor}\label{cor:NCdefs}
The \textnormal{NC} deformation algebra $\Lambda_{\mathrm{def}}$ of $\scrO_\Curve\in\coh\scrX$ is described by
\[
\Lambda_{\mathrm{def}}\cong\Jac(\mathsf{W})=\frac{\ringtwo}{\lcl\dcyc_{x}\mathsf{W}, \dcyc_{y}\mathsf{W}\rcl}
\]
where $\mathsf{W}=\sum\uplambda_{jk}\NeckPoly_{j,k}\in\ringtwo$ is the sum of free necklace polynomials, and the $\uplambda_{jk}$ are the data in the glue \eqref{eq!glue!intro}.
\end{cor}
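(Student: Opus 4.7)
The plan is to take the abstract presentation of $\Lambda_{\mathrm{def}}$ recalled in \S\ref{subsec:NCdeftheory} and feed into it the explicit $\Ainf$-products from \ref{thm: main}. By \ref{prop:DGAcheck} the DG-algebra $\scrC$ of \S\ref{sec:DGAC} is quasi-isomorphic to the injective DG enhancement $\End^{\mathrm{DG}}_\scrX(\scrI)$, so it may be used to compute $\Lambda_{\mathrm{def}}$.  Writing $x,y$ for the basis of $\Ext^1_\scrX(\scrO_\Curve,\scrO_\Curve)^*$ dual to $\xsf,\ysf$, and $X,Y$ for the basis of $\Ext^2_\scrX(\scrO_\Curve,\scrO_\Curve)^*$ dual to $\Xsf,\Ysf$, the presentation
\[
\Lambda_{\mathrm{def}}\cong\frac{\ringtwo}{\lcl\mathsf{m}^*(X),\,\mathsf{m}^*(Y)\rcl}
\]
reduces the statement to identifying $\mathsf{m}^*(X)=\dcyc_{x}\mathsf{W}$ and $\mathsf{m}^*(Y)=\dcyc_{y}\mathsf{W}$ in $\ringtwo$.

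To compute $\mathsf{m}^*_n(X)$, I would pair with an arbitrary length-$n$ monomial in the $\xsf,\ysf$: if such a monomial has $j$ occurrences of $\xsf$ and $k$ occurrences of $\ysf$, then by \ref{thm: main}\eqref{maintext:one} the coefficient of $\Xsf$ in $\msf_n$ applied to it is exactly $\uplambda_{j+1,k}$, independently of the ordering. Dualising, the contribution of $\msf_n$ to $\mathsf{m}^*(X)$ is therefore
\[
\mathsf{m}^*_n(X)=\sum_{j+k=n}\uplambda_{j+1,k}\,\Mono_{j,k},
\]
where the use of $\Mono_{j,k}$ (rather than a single ordered monomial) is precisely the reflection of the order-independence asserted in \ref{thm: main}(2). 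The identical computation with the $\Ysf$-coefficient yields $\mathsf{m}^*_n(Y)=\sum_{j+k=n}\uplambda_{j,k+1}\Mono_{j,k}$.

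Next, I would invoke \ref{lem:NecksDiff} to rewrite $\Mono_{j,k}=\dcyc_{x}\NeckPoly_{j+1,k}=\dcyc_{y}\NeckPoly_{j,k+1}$, so that summing over $n$ gives
\[
\mathsf{m}^*(X)=\dcyc_{x}\Big(\sum_{j'\ge 1,\,k'\ge 0}\uplambda_{j',k'}\NeckPoly_{j',k'}\Big),\qquad \mathsf{m}^*(Y)=\dcyc_{y}\Big(\sum_{j'\ge 0,\,k'\ge 1}\uplambda_{j',k'}\NeckPoly_{j',k'}\Big).
\]
Since $\dcyc_{x}\NeckPoly_{0,k'}=0$ and $\dcyc_{y}\NeckPoly_{j',0}=0$, the missing summands contribute nothing, and both expressions are the cyclic derivatives of the common superpotential $\mathsf{W}=\sum\uplambda_{jk}\NeckPoly_{j,k}$. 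This identifies the defining ideal and yields the claimed Jacobi algebra presentation.

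The only conceptually delicate step is the passage from $\msf_n$ on a single ordered input to a sum over $\Mono_{j,k}$: this is where the full strength of \ref{thm: main}(2) is needed, and it is precisely this symmetry that makes necklace polynomials (rather than arbitrary noncommutative monomials) the natural output. The remaining work is bookkeeping on dualisation conventions and the observation that the unconstrained range of $(j,k)$ in $\mathsf{W}$ does not spoil matters because terms with $j=0$ or $k=0$ have vanishing $\dcyc_x$ or $\dcyc_y$ respectively.
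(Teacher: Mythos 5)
Your proposal is correct and follows essentially the same route as the paper: it uses the presentation of $\Lambda_{\mathrm{def}}$ via the dual of $\sum\msf_i$ from \S\ref{subsec:NCdeftheory}, feeds in \ref{thm: main} to identify the images of $\Xsf^*,\Ysf^*$ as $\sum\uplambda_{jk}\Mono_{j-1,k}$ and $\sum\uplambda_{jk}\Mono_{j,k-1}$ (your indexing $\uplambda_{j+1,k}\Mono_{j,k}$ is the same after reindexing), and then applies \ref{lem:NecksDiff} to recognise these as $\dcyc_x\mathsf{W}$ and $\dcyc_y\mathsf{W}$. The only difference is cosmetic bookkeeping, such as your explicit remark that $\dcyc_x\NeckPoly_{0,k}=0$ and $\dcyc_y\NeckPoly_{j,0}=0$, which the paper leaves implicit.
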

\begin{proof}
As in \cite[below 4.3]{Kawamata_survey}, and summarised in \S\ref{subsec:NCdeftheory} above, the NC deformation algebra $\Lambda_{\mathrm{def}}$ can be presented as the completed tensor algebra over $\mathsf{A}_1^*$, subject to the relations induced by the map
\begin{equation}\label{eqn:dualgivespres}
\mathsf{A}_2^*\to \hat{\mathrm{T}}_\mathbb{C}(\mathsf{A}_1^*) 
\end{equation}
which is dual to $\sum_{i=2}^\infty\msf_i$.  Now, in \ref{thm: main} the term $\uplambda_{j,k}\Xsf$ appears precisely in those $\msf_n$ when there are $j-1$ occurrences of $\xsf$ and $k$ occurrences of $\ysf$.  Similarly the term $\uplambda_{j,k}\Ysf$ appears precisely in those $\msf_n$ when there are $j$ occurrences of $\xsf$ and $k-1$ occurrences of~$\ysf$.  Writing $x=\xsf^*$ and $y=\ysf^*$, then under \eqref{eqn:dualgivespres} it follows that
\begin{align*}
\Xsf^*&\mapsto\sum_{j+k\geq 3}\uplambda_{j,k}\Mono_{j-1,k}(x,y),\\
\Ysf^*&\mapsto\sum_{j+k\geq 3}\uplambda_{j,k}\Mono_{j,k-1}(x,y).
\end{align*}
The first relation is thus $\sum_{j+k\geq 3}\uplambda_{j,k}\Mono_{j-1,k}$, which equals $\dcyc_{x}\mathsf{W}$ by \ref{lem:NecksDiff}.  In a similar way, the second relation equals $\dcyc_{y}\mathsf{W}$. 
\end{proof}

The above then immediately recovers Katz--Namba \cite{Katz, Namba}.

\begin{cor}\label{cor:Cdefs}
Classical commutative deformations of $\scrO_\Curve\in\coh\scrX$ are prorepresented by
\[
\Lambda_{\mathrm{def}}^{\ab}\cong\Jac(\mathsf{W})^{\ab}=\frac{\commringtwo}{(\dcyc_{x}\mathsf{V}, \dcyc_{y}\mathsf{V})}
\]
where $\mathsf{V}=\sum\uplambda_{j,k}\NeckPoly_{j,k}^{\,\ab}=\sum\frac{\uplambda_{j,k}}{j+k}{j+k \choose k}x^jy^k$.
\end{cor}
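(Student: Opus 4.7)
The plan is to deduce this directly from \ref{cor:NCdefs} by abelianization. Since the classical commutative deformation functor is the restriction of the NC deformation functor to commutative Artinian test algebras, its prorepresenting hull is obtained from the NC hull $\Lambda_{\mathrm{def}}$ by abelianization $\Lambda_{\mathrm{def}}^{\ab}\cong\Lambda_{\mathrm{def}}/[\Lambda_{\mathrm{def}},\Lambda_{\mathrm{def}}]$. Writing $\pi\colon\ringtwo\to\commringtwo$ for the abelianization map, applying $\pi$ to the presentation in \ref{cor:NCdefs} yields
\[
\Lambda_{\mathrm{def}}^{\ab}\cong\frac{\commringtwo}{(\pi(\dcyc_{x}\mathsf{W}),\,\pi(\dcyc_{y}\mathsf{W}))}.
\]
The task then reduces to identifying these abelianized relations with $\dcyc_{x}\mathsf{V}$ and $\dcyc_{y}\mathsf{V}$, where on $\commringtwo$ the cyclic derivative $\dcyc_{x}$ is identified with the usual partial derivative $\partial_x$.

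For the combinatorial side, I would first compute $\NeckPoly_{j,k}^{\,\ab}$. Since abelianization sends each monomial $p_m$ (which has precisely $j$ occurrences of $x$ and $k$ of $y$) to $x^jy^k$, and since the $\mathbb{Z}_{j+k}$ orbits partition $\Neck_{j,k}$ into a set whose total cardinality is $\binom{j+k}{k}$, it follows that
\[
\NeckPoly_{j,k}^{\,\ab}=\frac{1}{j+k}\Bigl(\sum_{m\in\Orb_{j,k}}|m|\Bigr)x^jy^k=\frac{1}{j+k}\binom{j+k}{k}x^jy^k,
\]
which gives the stated formula for $\mathsf{V}=\pi(\mathsf{W})$.

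For the identification of the relations, the key point is the compatibility $\pi\circ\dcyc_{x}=\partial_x\circ\pi$ (and symmetrically for $y$). This is elementary: for any NC monomial $m$ with $j$ occurrences of $x$ and $k$ of $y$, the cyclic derivative $\dcyc_{x}m$ has exactly $j$ nonzero contributions, one for each cyclic rotation that starts with $x$, and each abelianizes to $x^{j-1}y^k$. Hence $\pi(\dcyc_{x}m)=jx^{j-1}y^k=\partial_x\pi(m)$, and extending by linearity gives $\pi(\dcyc_{x}\mathsf{W})=\partial_x\mathsf{V}=\dcyc_{x}\mathsf{V}$, and similarly for $y$. Alternatively, one can verify the conclusion directly by combining \ref{lem:NecksDiff} with the observation $\pi(\Mono_{j,k})=\binom{j+k}{k}x^jy^k$, in which case agreement with $\partial_x\mathsf{V}$ rests on the elementary identity $\tfrac{j}{j+k}\binom{j+k}{k}=\binom{j+k-1}{k}$.

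There is no serious obstacle here: the argument is a routine combination of the NC result \ref{cor:NCdefs} with the combinatorial formula for $\NeckPoly_{j,k}^{\,\ab}$ and the elementary commutation of $\dcyc$ with abelianization. The only mild subtlety is the opening step, which invokes the standard fact that the commutative deformation hull is the abelianization of the NC deformation hull; this is precisely what allows the purely algebraic translation of \ref{cor:NCdefs} into the classical Namba--Katz presentation.
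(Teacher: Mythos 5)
Your proposal is correct and follows essentially the same route as the paper: the paper's proof is just the one-line observation that the commutative hull is the abelianisation of the NC hull (citing Toda), combined with the computation $\NeckPoly_{j,k}^{\,\ab}=\tfrac{1}{j+k}\binom{j+k}{k}x^jy^k$ already noted in the introduction. The extra details you supply (the compatibility $\pi\circ\dcyc_x=\partial_x\circ\pi$ and the binomial bookkeeping) are exactly the routine verifications the paper leaves implicit.
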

\begin{proof}
This follows from \ref{cor:NCdefs}, since as is standard the versal space for commutative deformations is the abelianisation of the noncommutative version (e.g.\ \cite[(13)]{TodaUtah}).
\end{proof}

\subsection{Mirror Models}
The next application of \ref{thm: main} is categorical.  Given a quiver with superpotential $(Q,\mathsf{W})$, Ginzburg \cite{Ginz} associates a 3-CY category $\scrD_\mathsf{W}$.  It is a basic question to find geometric models for such categories, on both the A- and B-sides of mirror symmetry.  

\begin{cor}\label{cor:mirror}
Let $\mathsf{W}\in\mathbb{C}\langle x,y\rangle$ and consider the associated \textnormal{3-CY} category $\scrD_\mathsf{W}$.  If there exist scalars $\uplambda_{jk}$ for which $\mathsf{W}=\sum\uplambda_{jk}\NeckPoly_{j,k}$,  then there exists a smooth \textnormal{3}-fold $\scrX$ and rational curve $\Curve\subset\scrX$ such that 
\[
\Db(\coh\scrX)\supset \langle \scrO_\Curve\rangle\cong \scrD_\mathsf{W}.
\]
\end{cor}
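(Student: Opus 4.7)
\medskip
\noindent
\emph{Plan.}  Given the hypothesis that $\mathsf{W}=\sum\uplambda_{jk}\NeckPoly_{j,k}$, the plan is to use precisely these scalars $\uplambda_{jk}$ as the gluing data in \eqref{eq!glue} to build the required $\scrX$ and $\Curve\subset\scrX$.  Since each $\NeckPoly_{j,k}$ is a sum of monomials of total degree $j+k$ in the free algebra, writing $\mathsf{W}$ as a superpotential (which carries no constant, linear, or quadratic part that matters cyclically) allows us to assume $\uplambda_{jk}=0$ for $j+k\leq 2$; should all remaining $\uplambda_{jk}$ vanish (so $\mathsf{W}=0$), the conclusion follows from \ref{rem:allzerodegen} taking $\scrX$ to be the total space of $\scrO(-3)\oplus\scrO(1)$ and noting that $\scrD_0$ is generated by a spherical object. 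Otherwise the choice of $\uplambda_{jk}$ satisfies Setup~\ref{setup:key}, so \eqref{eq!glue} produces a smooth $3$-fold $\scrX$ containing a smooth rational curve $\Curve\cong\mathbb{P}^1$ with normal bundle $\scrO(-3)\oplus\scrO(1)$ by \ref{lem:normalbundle}.

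\medskip
\noindent
By \ref{prop:DGAcheck}, the DG-algebra $\scrC$ constructed from this geometric data is quasi-isomorphic to $\RHom_\scrX(\scrO_\Curve,\scrO_\Curve)$, and by \ref{thm: main} its $\Ainf$-minimal model is the explicit cyclic $\Ainf$-algebra $(\mathsf{A},\{\msf_n\})$ with products
\[
\msf_n(\underbrace{\xsf,\hdots,\xsf}_j,\underbrace{\ysf,\hdots,\ysf}_k)=\uplambda_{j+1,k}\Xsf+\uplambda_{j,k+1}\Ysf
\]
(independent of the order of degree-one inputs), together with the nonzero degree-two products $\msf_2$ involving $\upxi$ from \ref{thm: main}(3).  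Thus $\langle\scrO_\Curve\rangle\subset\Db(\coh\scrX)$ is, by Koszul/tilting duality (e.g.\ Keller), quasi-equivalent to the perfect derived category of the $\Ainf$-algebra $\mathsf{A}$.

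\medskip
\noindent
The main work is then to show that this cyclic $\Ainf$-algebra $\mathsf{A}$ is exactly the one associated to the Ginzburg DG-algebra $\Gamma(Q,\mathsf{W})$ of the one-vertex quiver $Q$ with two loops $x,y$ and superpotential $\mathsf{W}$, from which Keller's theorem gives $\scrD_\mathsf{W}\simeq\Perf(\mathsf{A})$.  For a cyclic $\Ainf$-algebra on a vector space of the shape of $\mathsf{A}$ (with its symmetric inner product pairing $\mathsf{A}_1\times\mathsf{A}_2\to\mathsf{A}_3$ and $\mathsf{A}_0\times\mathsf{A}_3\to\mathsf{A}_3$), the associated superpotential is recovered by the standard formula
\[
\mathsf{W}_\mathsf{A}\;=\;\sum_{n\geq 2}\tfrac{1}{n+1}\sum_{i_0,\hdots,i_n}\langle \msf_n(\asf_{i_1},\hdots,\asf_{i_n}),\asf_{i_0}\rangle\, z_{i_0}z_{i_1}\hdots z_{i_n},
\]
where $z_i$ are dual variables to the $\asf_i\in\mathsf{A}_1$.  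Inserting the explicit $\msf_n$ from \ref{thm: main} and using the pairing in which $\langle\Xsf,\ysf\rangle$ and $\langle\Ysf,\xsf\rangle$ are nontrivial, the symmetrisation over all orderings of $j$ copies of $\xsf$ and $k$ copies of $\ysf$ (of which there are $\binom{j+k}{j}$) combines with the factor $\tfrac{1}{n+1}$ to reproduce exactly the coefficient of $\NeckPoly_{j,k}$ in the superpotential, so that $\mathsf{W}_\mathsf{A}=\sum\uplambda_{jk}\NeckPoly_{j,k}=\mathsf{W}$ up to cyclic permutation.

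\medskip
\noindent
The hard part is the combinatorial bookkeeping in the last step: keeping track of the cyclic symmetrisation, signs, and orbit sizes so that the $\NeckPoly_{j,k}$ emerge with the correct rational coefficients from the unordered $\msf_n$.  This is precisely the content of the formula $\NeckPoly_{j,k}=\tfrac{1}{j+k}\sum_{m\in\Orb_{j,k}}|m|\,p_m$ and is the reason the necklace polynomials are the right noncommutative object to describe the superpotential.  Once this dictionary is set up, the identification $\langle\scrO_\Curve\rangle\cong\scrD_\mathsf{W}$ is formal via Keller's characterisation of 3-CY categories by their cyclic $\Ainf$-endomorphism algebras of a simple object.
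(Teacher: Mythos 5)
Your overall route is the same as the paper's: build $\scrX$ and $\Curve$ from the glue \eqref{eq!glue} using the given $\uplambda_{jk}$, describe $\langle\scrO_\Curve\rangle$ via the $\Ainf$-structure of \ref{thm: main}, describe $\scrD_\mathsf{W}=\scrD_{\mathrm{fd}}(\Upgamma)$ via the $\Ainf$-structure on the Ext-algebra of the unique simple of the two-loop Ginzburg DGA, and match the two. The only genuine difference is the direction of the comparison: the paper cites Keller (\S A.15) to say that the Ginzburg side's $\Ainf$-structure is encoded by the pairing and the superpotential on degree-one inputs, and observes this is exactly \ref{thm: main} (the necklace combinatorics being exactly \ref{lem:NecksDiff}, as used in \ref{cor:NCdefs}), whereas you reconstruct a potential $\mathsf{W}_\mathsf{A}$ from the products via the cyclic formula and claim it equals $\mathsf{W}$. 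That is an equivalent dictionary run backwards, and is fine in principle, though you defer the coefficient check ("the hard part") to an assertion; the paper's \ref{lem:NecksDiff} is where that bookkeeping actually lives.

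Three concrete slips. First, the pairing is misstated: by \ref{thm: main}(3) the nonzero $\msf_2$'s are $\msf_2(\xsf,\Xsf)=-\upxi$ and $\msf_2(\ysf,\Ysf)=-\upxi$, so the cyclic pairing couples $\xsf$ with $\Xsf$ and $\ysf$ with $\Ysf$, not $\langle\Xsf,\ysf\rangle$ and $\langle\Ysf,\xsf\rangle$ as you write; with your pairing the reconstruction attaches $\uplambda_{j+1,k}$ to words with $j$ letters $x$ and $k+1$ letters $y$, and the output is not $\sum\uplambda_{jk}\NeckPoly_{j,k}$, so the deferred computation would fail as written (it is repaired by swapping the pairing). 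Second, your reduction to $\uplambda_{jk}=0$ for $j+k\leq 2$ via "no quadratic part matters cyclically" is false as a mathematical claim (quadratic and linear terms of a superpotential certainly change $\Jac(\mathsf{W})$ and $\scrD_\mathsf{W}$); the honest reading is that the hypothesis only involves the $\uplambda_{jk}$ arising as glue data, i.e.\ Setup~\ref{setup:key} holds, which is also what the paper's proof implicitly assumes, since \ref{thm: main} is only proved in that setting. Third, in the degenerate case of \ref{rem:allzerodegen} the object $\scrO_\Curve$ is not spherical (here $\Ext^1\cong\mathbb{C}^2$), so that parenthetical justification is wrong, although the case itself is harmless since all products vanish on both sides.
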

\begin{proof}
On one hand, the category $ \langle \scrO_\Curve\rangle$ can be described using the $\Ainf$-structure on the DG-endomorphism ring of $\scrO_\Curve$, which is computed in \ref{thm: main} above.  On the other hand,
by definition $\scrD_\mathsf{W}=\scrD_{\mathrm{fd}}(\Upgamma)$ where $\Upgamma$ is the Ginzburg DGA associated to the quiver with potential.  This category has a canonical heart, generated by the simple modules  indexed over the vertices of the quiver, which in our case (the two-loop quiver) consists of a unique simple $S$.  Thus $\scrD_\mathsf{W}$ is described using the $\Ainf$-structure on the DG-endomorphism ring of $S$, which by \cite[\S A.15]{Keller} is entirely encoded by the pairing and the superpotential on degree one inputs.  But this is precisely the $\Ainf$-structure described in \ref{thm: main} above, and so the result follows.
\end{proof}

\subsection{Relationship to Physics}\label{subsec:physics}
The majority of the physics literature for rational curves (e.g.\ \cite{Katz,AspinwallKatz,CM}) considers superpotentials viewed inside the commutative power series ring $\commringtwo$.  However, \cite[\S3.3.1]{Ferrari} observed that ordering ambiguities arise when D5 branes are considered, and proposed the matrix rule \cite[(3.18)]{Ferrari} over 10 years before noncommutative deformation theory entered rational curves \cite{DW1}.  This subsection explains why Ferrari's predicted rule is consistent with \ref{cor:NCdefs}, and in the process establishes  \ref{cor:NCdefs} as the mathematically precise formulation of these physical predictions.

In 2003 Ferrari \cite[(3.18)]{Ferrari} predicted that in any commutative potential, $x^jy^k$ should get replaced by a noncommutative term in variables $X$ and $Y$, under the rule
\[
x^jy^k\longrightarrow \frac{j!\, k!}{(j+k)!}\oint_{C_0}\frac{dz}{2\uppi \mathrm{i}}\,z^{-k-1}\mathrm{tr}(X+Yz)^{j+k}.
\]
Taking the coefficient to the other side, the prediction can be rewritten
\[
\begin{psmallmatrix} j+k\\k\end{psmallmatrix}x^jy^k\longrightarrow \oint_{C_0}\frac{dz}{2\uppi \mathrm{i}}\,z^{-k-1}\mathrm{tr}(X+Yz)^{j+k}
\]
and so the commutative potential $\mathsf{V}=\sum\frac{\uplambda_{jk}}{j+k} {j+k \choose k}x^jy^k$ of \ref{cor:Cdefs} (known to \cite{Katz, Namba}) should be replaced with an element in the free algebra, via the rule
\begin{equation}
\mathsf{V} \longrightarrow \mathbb{W}=\sum_{j,k}\frac{\uplambda_{jk}}{j+k}\oint_{C_0}\frac{dz}{2\uppi \mathrm{i}}\,z^{-k-1}\mathrm{tr}(X+Yz)^{j+k}.\label{eqn:keyrule}
\end{equation}
There is a mild ambiguity about whether the right hand side should be viewed in the free algebra $F$ in variables $X$ and $Y$, or in its quotient $F/[F,F]$. Regardless, each contour integral in the right hand side \eqref{eqn:keyrule} can be viewed as an element of $F$, where the integral is determined by simply computing the residue.  As the coefficient of the $1/z$ term is precisely the sum of all terms containing $k$ occurrences of $Y$ in the expansion of $(X+Y)^{j+k}$, and this equals $(j+k)\NeckPoly_{j,k}$, it follows that\footnote{To calibrate the trace, use \cite[(3.19), (3.20)]{Ferrari} applied to $V_Y=0=V$ and $V_X(X)=X^{3}$.  There, $E=z^2w^3$ and so the additional glueing term is $\delta_wE=3z^2w^2$.  Thus in the notation of this paper, $\uplambda_{30}=3$ whilst all other $\uplambda_{jk}=0$. Ferrari \cite[(3.20)]{Ferrari} predicts $\mathbb{W}=V_X(X)=X^3$, which equals $3\frac{1}{3}X^3=\uplambda_{30}\NeckPoly_{3,0}$.}  
\[
\frac{\uplambda_{jk}}{j+k}\oint_{C_0}\frac{dz}{2\uppi \mathrm{i}}\,z^{-j-1}\mathrm{tr}(X+Yz)^{j+k}=\frac{\uplambda_{jk}}{j+k}\cdot(j+k)\NeckPoly_{j,k}(X,Y)
\]
and so $\mathbb{W}=\sum\uplambda_{jk}\NeckPoly_{j,k}$, which is the noncommutative potential in \ref{cor:NCdefs}.

\end{document}